\documentclass[12pt]{article}
\usepackage{graphicx} % Required for inserting images
\usepackage{amsmath}
\usepackage{amsthm}
\usepackage{amssymb}
\usepackage{geometry}
\usepackage{mathrsfs}
\usepackage{lmodern}
\usepackage{upgreek}
\usepackage{enumitem}
\usepackage{scalerel}
\usepackage[
backend=biber,
sorting=nyt
]{biblatex}
\usepackage{xcolor}
\usepackage{hyperref}
\hypersetup{
pdfborder = {0 0 0},
colorlinks,
linkcolor=blue!40!black,
citecolor=blue!40!black,
urlcolor=blue!40!black
}

\usepackage{csquotes}
\usepackage[T1]{fontenc} 
\usepackage{dsfont}
\newcommand{\RR}{\mathbb R}
\newcommand{\R}{\mathcal R}
\newcommand{\I}{\mathcal I}
\newcommand{\J}{\mathcal J}
\newcommand{\EE}{\mathbb E}
\newcommand{\TT}{\mathbb T}
\newcommand{\PP}{\mathbb P}

\newcommand{\1}{\mathds{1}}
\newcommand{\ZZ}{\mathbb Z}
\renewcommand{\d}{\mathrm d}
\renewcommand{\i}{\mathrm i}

\newcommand{\distribution}{\stackrel{\scaleto{\mathrm{law}}{4pt}}{\longrightarrow}}
\newcommand{\Var}{\mathrm{Var}}

\newcommand{\eqdef}{\stackrel{\scaleto{\mathrm{def}}{3.5pt}}{=}}

\newtheorem{theorem}{Theorem}
\newtheorem*{theorem*}{Theorem}

\newtheorem{proposition}{Proposition}
\newtheorem{lemma}{Lemma}
\newtheorem*{remark}{Remark}

\newtheorem{conjecture}{Conjecture}

\numberwithin{equation}{section}

\title{On the range of competing random walks}
\author{Maxence Baccara$^*$}

\begin{document}

\maketitle
\let\thefootnote\relax\footnotetext{$^{*}$ CMAP \& CPHT \& MERGE, École polytechnique, Institut Polytechnique de Paris, 91120 Palaiseau, France}

\begin{abstract}
    We consider $N$ independent random walks $X^1,\dots,X^N$ in the lattice $\ZZ^d$ and prove limit theorems for the competitive range $\R_n^k$ of the $k$-th random walk $X^k$, which corresponds to the number of distinct sites that it has discovered before any of the other $X^\ell$, $\ell\ne k$, up to time $n$. This is a natural object to study foraging mechanisms in population ecology, in which context it is also natural to ask how the effect of competition for the access to resources affects the number of resources consumed by each individual. We work with random walks in the domain of attraction of a $\beta$-stable law and focus on the regime $d/\beta\in[1,3/2)$, in which classical results for the range show that the fluctuations are described by the renormalized self-intersection local time of the limiting process. We establish a central limit theorem in which a competition term emerges, thus answering the two previous questions we asked. We end the paper with a brief discussion on the remaining regimes $d/\beta\ge3/2$, in which the fluctuations are Gaussian and are not affected by the competition, and $d/\beta<1$ in which no strong law of large numbers holds and we expect the effect of the competition to strongly affect the first-order asymptotics.
\end{abstract}

\tableofcontents

\section{Introduction}

\subsection{Main objects and literature review}

Let $N$ be a fixed positive integer greater than or equal to $2$. Let $X^1,\dots,X^N$ be $N$ independent and identically distributed random walks in $\ZZ^d$ defined on the probability space $\left(\Omega,\mathcal F,\PP,\left(\theta_n\right)_{n\ge0}\right)$, where $\theta_n$ denotes the $n$-th shift operator. For any $k\in\{1,\dots,N\},x\in\ZZ^d$, we let $T_x^k$ denote the first hitting time of $x$ by $X^k$. In this work, motivated by questions in ecology of populations stemming from \cite{Bansaye2024}, we shall study the quantities
\begin{equation*}
    \R_n^k\eqdef\sum_{x\in\ZZ^d}\1\left(T_x^k\le n,T_x^k<\underset{j\ne k}\min \ T_x^j\right),\quad k\in\{1,\dots,N\}.
\end{equation*}
Simply put, $\R_n^k$ denotes the number of distinct sites that were visited by $X^k$ before they were visited by any other of the $X^j$, $j\ne k$. In what follows, we shall refer to $\R^k$ as the competitive range of $X^k$, as opposed to the range, which is simply the number of distinct sites visited by $X^k$ regardless of whether or not they were also visited by the other $X^j$, $j\ne k$, $i.e.$ the quantity
\begin{equation*}
    \left|X^k(0,n)\right|\eqdef\left|\{X_j^k, j\in[\![0,n]\!]\}\right|.
\end{equation*} 

We introduce the competitive range in order to study foraging mechanisms in a simple prey-predator model, which we now describe. Working in $\ZZ^d$, we start with $N$ independent random walks which each represent a different predator at the origin and with one prey on every other site of the lattice. Then, at integer times, we let each predator move in the lattice according to its jump distribution. If it lands on a site occupied by a prey and no other predator, then the predator consumes the prey instantaneously. If two or more predators land on a site that is occupied by prey, then the prey in question is removed from the lattice but is not counted as being consumed by any of the predators that first discovered it. We could also have supposed that if two predators discover a site occupied by prey, then the prey is counted towards both their competitive ranges. However, asymptotically, these two conventions are equivalent since the number of collisions of two independent random walks ($i.e.$ the number of $\ell\in\{1,\dots,n\}$ such that $X_\ell=\widetilde X_\ell$) is very small in front of the number of sites that the random walks in question visit. For simplicity, we therefore choose the former. Finally, if a predator lands on a site that contains no prey, then nothing happens. As for the prey, we do not suppose that there is any motion or regeneration. The question we aim to answer is thus the following: after $n$ steps, where $n$ is large, how many prey were consumed by the $k$-th predator? In order to give a satisfying answer, we shall establish, under some hypothesis on the random walks $X^1,\dots, X^N$, a Law of Large Numbers (LLN) and a Central Limit Theorem (CLT) for the competitive ranges $\R^k$, $k\in\{1,\dots,N\}$. By this we understand respectively that
\begin{equation*}
    \R_n^k/\EE\left[\R_n^k\right]\underset{n\rightarrow\infty}{\overset{a.s.}\longrightarrow}1,
\end{equation*} 
and that 
\begin{equation*}
    \Var\left(\R_n^k\right)^{-1/2}\left(\R_n^k-\EE\left[\R_n^k\right]\right)\distribution Z,
\end{equation*}
where $Z$ is a non-degenerate random variable. In particular, we are interested in understanding how the effect of competition between individuals appears in our limit theorems.

The range $|X(0,n)|$ of a random walk $X$ is a classical object that has been extensively studied and is now well understood. It was first shown by Dvoretsky and Erdös (\cite{DvoretzkyErdos1951}) that in the case where $X$ is the simple random walk, then $|X(0,n)|$ satisfies the LLN in dimension $d\ge2$. It was then shown respectively by Kesten, Spitzer and Whitman (\cite{Spitzer1965}) and by Jain and Pruitt (\cite{JainPruitt1972}) that the LLN holds for any transient random walk and for recurrent two-dimensional random walks. The CLT was established for any random walk in $\ZZ^d$, $d\ge3$ by Jain and Pruitt (\cite{JainPruitt1971}, \cite{JainPruitt1974}), and then by Le Gall for centered $L^2$ walks in $\ZZ^2$ (\cite{LeGallIntersectionsMarchesAléatoires1986}). In the first case, the limiting distribution is Gaussian, and remarkably it is non-Gaussian for planar random walks with finite variance and the fluctuations are descirbed by a random variable that counts the number of self-intersections of Brownian motion. It was later shown by Le Gall and Rosen (\cite{LeGallRosenRangeOfStableRandomWalks1991}) that the LLN and CLT hold for any $\beta$-stable walk (see Section \ref{susbection:main results, strategy of proof}, assumption \ref{Assumption A1}) potentially under some additional hypothesis. They show that there are three regimes, depending on the ratio $d/\beta$. When $d/\beta\ge3/2$, the range has linear growth in the first order and Gaussian fluctuations at the diffusive scale, with some additional small correction when $d/\beta=3/2$. When $3/2>d/\beta\ge1$, the range also grows linearly when the walk is transient and is subject to some correction given by the truncated Green function otherwise, and the fluctuations are non-Gaussian and given by a random variable that counts the number of self-intersections of the stable process obtained when scaling the underlying random walk. Finally, when $d/\beta<1$, which in particular forces $d=1$, then there is not any LLN and instead the quantity $b(n)^{-d}|X(0,n)|$ converges in distribution to the Lebesgue measure of the set of points visited by the limiting stable process. Many other results regarding the range have been obtained in the literature, such as a Law of Iterated Logarithms and Large and Moderate deviations. For an accurate account of such results, one can consult the introduction of \cite{BassChenRosenModerateDeviations2009}. 

The present work is related to the finite-volume colouring or painting models studied in \cite{JuniorLucenaSilvaHilhorst1996}, \cite{Miller2010PaintingAG},
but the asymptotic problem considered here appears to be different from the existing literature. In \cite{JuniorLucenaSilvaHilhorst1996}, Gomes Jr., Lucena, da Silva and Hilhorst introduced a one-dimensional colouring
model in which two independent random walkers on a ring irreversibly colour each site according to which walker visits it first. Their analysis focuses on the final colouring of the ring,
including the one-point colouring probabilities and the number of interfaces.
Miller (\cite{Miller2010PaintingAG}) later studied a general finite-graph version of this painting problem. For two independent random walks on a sequence of graphs, he investigated the sets $A_i$ of vertices painted
first by each walk once the graph has been fully covered, and obtained sharp asymptotics for
$\Var(|A_i|)$ on $\ZZ^d$, $d\ge3$. By contrast, we consider competing walks in infinite volume and study the number of sites
discovered first by a given walk before a deterministic time $n$. Thus the competitive range
considered here is a time-dependent analogue of the final painted regions in finite-volume
models. This leads to different asymptotic questions, namely laws of large numbers and fluctuation limits in the stable scaling regimes.

\subsection{Main results, strategy of proof and some notation}
\label{susbection:main results, strategy of proof}

We now present the main results of the paper, namely a LLN and CLT for the competitive range, and give an overview of the strategy of proof used. In what follows we shall be working under the additional assumptions
\begin{enumerate}[label=(A\arabic*)]
    \item
    \label{Assumption A1}
    There is some $\beta\in(0,2]$ satisfying $d/\beta\in[1,3/2)$ such that   for any $k\in\{1,\dots,N\}$, $X^k$ is in the domain of attraction of a (strictly) $\beta$-stable process, $i.e.$ there is a $1/\beta$-regularly varying function $b$ (see Section \ref{section:Preliminary Results}) and a strictly $\beta$-stable process $U$ such that for each $k\in\{1,\dots,N\}$,
    \begin{equation*}
        \frac{X_n^k}{b(n)}\distribution U_1.
    \end{equation*}
    Furthermore, we suppose without loss of generality that $b$ is continuous and strictly increasing on $\RR_+$. 
    \item
    \label{Assumption A2}
    For each $k\in\{1,\dots,N\}$, the additive group generated by $\{x\in\ZZ^d,\ \PP\left(X_1^k=x\right)>0\}$ is the whole space $\ZZ^d$.
    \item
    \label{Assumption A3}
    The characteristic function of the $X_1^k$, noted $\varphi$, is continuously differentiable on $[-\pi,\pi]^d\backslash\{0\}$ and for all $x\in[-\pi,\pi]^d\backslash\{0\}$,
    \begin{equation*}
        |\nabla\varphi(x)|\lesssim\left(|x| b^{-1}(|x|^{-1})\right)^{-1}.
    \end{equation*}
\end{enumerate}

The main assumption here is \ref{Assumption A1}. Indeed, we work in the regime $d/\beta\in[1,3/2)$ in which the fluctuations of the range are given by the renormalized self-intersection local time of the stable process that is obtained as a scaling limit of the random walk. It is well known (see \cite{feller1991introduction}) that in order for a random walk to satisfy a CLT like the one in \ref{Assumption A1}, then it must necessarily be in the domain of attraction of a stable process. Furthermore, the fact that $b$ can be supposed continuous and strictly increasing can also be found in (\cite{feller1991introduction} $pp. 577-580$).

Assumptions \ref{Assumption A2} and \ref{Assumption A3} are used in \cite{LeGallRosenRangeOfStableRandomWalks1991} to establish a LLN and CLT for the range in the regime $1\le d/\beta<3/2$. The condition \ref{Assumption A2} (strong aperiodicity) is used to simplify calculations but is not essential. The technical condition \ref{Assumption A3} plays a more important role, since it allows the authors to obtain a limit theorem for the quantity $T_x/b^{-1}(|x|)$ as $x\rightarrow\infty$, which is essential to obtain scaling limits of the number of intersections of two independent random walks. In the special case $\beta>1$, \ref{Assumption A3} is automatically satisfied, see \cite{LeGallRosenRangeOfStableRandomWalks1991}, Proposition $5.4$.

For any $x,y\in\ZZ^d$, we define the Green function of the $X^k$ as 
\begin{equation*}
    G(x,y)\eqdef\EE_x\left[\sum_{j=0}^\infty\1\left(X_j^k=y\right)\right]=\sum_{j=0}^\infty\PP_x\left(X_j^k=y\right).
\end{equation*}
For any $x,y\in\ZZ^d$, $G(x,y)$ represents the average amount of times that $X^k$ visits $y$ if it is started from $x$. Note that when the random walk is recurrent, then $G\equiv\infty$, and so we also define the truncated Green function
\begin{equation*}
    G_{n}(x,y)\eqdef\sum_{j=0}^n\PP_x\left(X_j^k=y\right),\quad n\ge1,\quad x,y\in\ZZ^d,
\end{equation*}
which instead represents the average number of visits to $y$ started from $x$ before time $n$. In particular, we define
\begin{equation*}
    h(n)\eqdef G_{n}(0,0),\quad n\ge1.
\end{equation*}
If $X^k$ is transient, then $h(n)$ remains bounded and it is well-known that 
\begin{equation*}
    \lim_{n\rightarrow\infty} h(n)=\PP\left(\forall j\ge1, X_j^k\ne X_0^k\right)^{-1}\eqdef q^{-1}.
\end{equation*}
When $X^k$ is recurrent, which may only occur if $d=\beta$ (recall that $d/\beta\ge1)$, then we rather have 
\begin{equation}
\label{limit:trunctaed green's function asymptotics}
    h(n)\underset{n\rightarrow\infty}{\sim}p_1(0)\sum_{j=1}^n\frac{1}{b(j)^d},
\end{equation}
where $p_t$ denotes the density of $U_t$ (see \cite{LeGallRosenRangeOfStableRandomWalks1991}, $(2.j)$). For example, in the case where $\beta=2$ and where $U$ is the standard Brownian Motion in $\RR^2$ with covariance matrix $\sigma^2\mathrm{Id}$, then we have
\begin{equation*}
    h(n)\underset{n\rightarrow\infty}\sim\frac{\log n}{2\pi\sigma^2}.
\end{equation*}

We are now ready to state our main results, the proofs of which are contained in Section \ref{section:proof of main results}. The first main result is a LLN for $\R_n^k$.
\begin{theorem}
    \label{theorem:LLN}
    Under \ref{Assumption A1} and \ref{Assumption A2}, we have 
    
    Under \ref{Assumption A1}, \ref{Assumption A2}, \ref{Assumption A3}, we have, for all $k\in\{1,\dots,N\}$,
    \begin{equation*}
        \frac{h(n)}{n}\R_n^k\underset{n\rightarrow\infty}{\overset{L^2}\longrightarrow}1.
    \end{equation*}
    Furthermore, if $s(n)\ge1$ for all $n\ge1$, then the previous convergence also holds almost surely.
\end{theorem}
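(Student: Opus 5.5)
Statement to prove: $\frac{h(n)}{n}\R_n^k\to1$ in $L^2$, and almost surely when $s(n)\ge1$.

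\textbf{Step 1: reduce to the range.} Write $\R_n^k=|X^k(0,n)|-D_n^k$, where
\begin{equation*}
D_n^k\eqdef\sum_{x}\1\left(T^k_x\le n,\ \exists j\ne k:\ T^j_x\le T^k_x\right)\le\sum_{j\ne k}\left|X^k(0,n)\cap X^j(0,n)\right|.
\end{equation*}
By the result of Le Gall and Rosen recalled in the introduction, the range satisfies $\frac{h(n)}{n}|X^k(0,n)|\to1$ in $L^2$ under \ref{Assumption A1}--\ref{Assumption A3}. It therefore suffices to show
\begin{equation*}
\EE\Big[\big(\tfrac{h(n)}{n}J_n\big)^2\Big]\to0,\qquad J_n\eqdef\left|X^1(0,n)\cap X^2(0,n)\right|.
\end{equation*}

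\textbf{Step 2: first and second moments of $J_n$.} Since the walks are independent,
\begin{equation*}
\EE[J_n]=\sum_x\PP(T_x\le n)^2 .
\end{equation*}
We bound $\PP(T_x\le n)\le G_n(0,x)/G_n(0,0)\cdot c$ using the strong Markov property, which gives $G_{2n}(0,x)\ge\PP(T_x\le n)\,h(n)$. This yields
\begin{equation*}
\EE[J_n]\lesssim h(n)^{-2}\sum_x G_{2n}(0,x)^2\lesssim h(n)^{-2}\sum_{i,j\le 2n}\PP(X_{i+j}=0).
\end{equation*}
The local limit theorem (available from \ref{Assumption A1}--\ref{Assumption A2}) gives $\PP(X_m=0)\asymp b(m)^{-d}$. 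Hence
\begin{equation*}
\EE[J_n]\lesssim h(n)^{-2}\sum_{m\le 4n} m\,b(m)^{-d}\asymp h(n)^{-2}\, n^2 b(n)^{-d},
\end{equation*}
where the last step uses regular variation with index $1-d/\beta>-1$.

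For the second moment, $\EE[J_n^2]=\sum_{x,y}\PP(T_x\le n,T_y\le n)^2$. We bound $\PP(T_x,T_y\le n)$ by the sum over the two orders of
\begin{equation*}
h(n)^{-2}\,G_{2n}(0,x)\,G_{2n}(x,y).
\end{equation*}
After squaring and applying Cauchy--Schwarz, this reduces to the square of the previous sum, giving
\begin{equation*}
\EE[J_n^2]\lesssim h(n)^{-4}\big(n^2b(n)^{-d}\big)^2 .
\end{equation*}
Therefore
\begin{equation*}
\Big(\tfrac{h(n)}{n}\Big)^2\EE[J_n^2]\lesssim \frac{n^2}{h(n)^2\,b(n)^{2d}}.
\end{equation*}
Since $d/\beta\ge1$, we have $n/b(n)^d\to0$ up to slowly varying factors. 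In the boundary case $d=\beta$, $n/b(n)^d$ is slowly varying, and here I would have to compare it with $h(n)\sim p_1(0)\sum_{j\le n} b(j)^{-d}$, which dominates $n\,b(n)^{-d}$ by a divergent factor. (Equivalently, this is the classical fact that the intersection of two ranges is of order $n^2/(h(n)^2b(n)^d)$, which is $o(n/h(n))$ precisely because $n/(h(n)b(n)^d)\to0$.)

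\textbf{Step 3: $L^2$ convergence.} Steps 1 and 2 together give $\frac{h(n)}{n}D_n^k\to0$ in $L^2$, and hence the $L^2$ statement.

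\textbf{Step 4: almost-sure convergence.} I would use the standard subsequence-plus-monotonicity argument. The quantity $s(n)$ is presumably the summability/decay quantity controlling the $L^2$ rate; under $s(n)\ge1$ the rate of $L^2$ convergence is summable along a geometric subsequence $n_m=\lfloor\rho^m\rfloor$. Borel--Cantelli then gives almost-sure convergence along $n_m$. To interpolate between $n_m$ and $n_{m+1}$, I would sandwich as follows:
\begin{equation*}
|X^k(0,n)|-\sum_{j\ne k}|X^k(0,n_{m+1})\cap X^j(0,n_{m+1})|\ \le\ \R_n^k\ \le\ |X^k(0,n_{m+1})|.
\end{equation*}
Both $h(n)/n$ and the range are regularly varying or monotone, so letting $\rho\downarrow1$ closes the argument.

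\textbf{Main obstacle.} I expect the hardest step to be the second-moment bound on $J_n$ in the critical case $d=\beta$, where all the gains are only logarithmic. The point to check carefully is that the off-diagonal double sum $\sum_{x,y}G(0,x)G(x,y)\cdots$ does not produce an extra factor of $h(n)$ that would cancel the $1/h(n)$ gain.
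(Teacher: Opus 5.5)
Your $L^2$ argument is correct and follows the paper's overall plan. The range is handled by Le Gall--Rosen's Theorem 6.9, the competition correction by moment bounds on intersections of ranges, and everything rests on $h(n)b(n)^d/n\to\infty$. That last fact is the paper's Lemma~\ref{lemma:limit of h(n)b(n)^d/n}. Your assertion of it for $d=\beta$ is right, and the proof is one line: by uniform convergence of $s(j)/s(n)$ on $[n/p,n]$, one has $\liminf_n s(n)^d\sum_{n/p\le j\le n}(j\,s(j)^d)^{-1}\ge\log p$ for every $p$.

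You genuinely differ from the paper in the reduction. The paper expands $\R_n^k$ by inclusion--exclusion \eqref{eq:decomposition formula for R_n^k}. It then controls every $\R_n^{S\rightarrow k}$ through Theorem~\ref{theorem:convergence of R_n^(A->B)} and Proposition~\ref{proposition:bounds on moments of intersections} applied to $I_n^{S\cup\{k\}}$. As written, this needs $(|S|+1)(d-\beta)<d$ for all $|S|\le N-1$, which fails for large $N$ when $d>\beta$ (e.g.\ $d=2$, $\beta=3/2$, $N\ge4$). It is easily repaired by bounding each term with a pairwise intersection. Your one-sided bound $0\le|X^k(0,n)|-\R_n^k\le\sum_{j\ne k}|X^k(0,n)\cap X^j(0,n)|$ does exactly that from the start. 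It needs only the pairwise estimate (Le Gall--Rosen's Corollary 3.2), so it is shorter and valid for every $N$.

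Your Green-function proof of the pairwise estimate is fine, and the obstacle you flag does not arise. By translation invariance, $\sum_{x,y}G_{2n}(0,x)^2G_{2n}(x,y)^2=\big(\sum_z G_{2n}(0,z)^2\big)^2$, so no extra power of $h(n)$ appears. There is one slip: $\sum_x\PP(X_i=x)\PP(X_j=x)=\PP(X_i=\widetilde X_j)$ equals $\PP(X_{i+j}=0)$ only for symmetric walks. Bound it instead by $\sup_y\PP(X_{i\vee j}=y)\lesssim b(i\vee j)^{-d}$, which yields the same order $n^2/b(n)^d$.

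The almost-sure part is where your proposal is only a placeholder. In the paper, $s$ is the slowly varying factor in $b(n)=n^{1/\beta}s(n)$, not a rate. Your normalized second moment is $n^2/(h(n)^2b(n)^{2d})$.
\begin{itemize}
\item For $d>\beta$ this is $\lesssim n^{-2(d/\beta-1)+\varepsilon}$. Chebyshev plus Borel--Cantelli along $\lfloor\rho^m\rfloor$ then works with no hypothesis on $s$; the paper instead uses $p$-th moments along all $n$.
\item For $d=\beta$ it equals $(h(n)s(n)^d)^{-2}$, which tends to $0$ only at a slowly varying speed. Its summability along a geometric subsequence does not follow from anything you have shown.
\end{itemize}
The paper imports exactly this fact, $\sum_k\big(h(\lfloor r^k\rfloor)s(\lfloor r^k\rfloor)^d\big)^{-2}<\infty$, from the proof of Le Gall--Rosen's Theorem 6.9. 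That theorem is also the source of the a.s.\ law of large numbers for the range under $s(n)\ge1$, which you need as well, since your Step 1 only cites the $L^2$ statement.

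With this input, your sandwich becomes the paper's interpolation argument, and a fixed $\rho>1$ suffices. For $n_m\le n<n_{m+1}$, monotonicity of $h$ gives $\frac{h(n)}{n}J_{n_{m+1}}\le\frac{n_{m+1}}{n_m}\cdot\frac{h(n_{m+1})}{n_{m+1}}J_{n_{m+1}}$, so there is no need to let $\rho\downarrow1$.
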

The second is a CLT for $\R_n^k$. As is the case for the range, we show that the fluctuations of the competitive range are non-Gaussian and are described by a random variable that counts the time that the process $U^k$  spends intersecting the past of the trajectories of the $(U^j)_{j\in\{1,\dots,N\}}$ including its own, where $U^\ell$ is the stable process obtained as a scaling limit of $X^\ell$.

\begin{theorem}
Under \ref{Assumption A1}, \ref{Assumption A2}, \ref{Assumption A3}, we have, for all $k\in\{1,\dots,N\}$,
\label{theorem:CLT}
    \begin{equation*}
    \frac{h(n)^2b(n)^d}{n^2}\left(\R_n^k-\EE\left[\R_n^k\right]\right)\distribution -\left(\gamma^k+\sum_{j\ne k} (\alpha^{j,k}-\EE[\alpha^{j,k}])\right)\left([0,1]_\le^2\right),
\end{equation*}
where $U^1,\dots,U^N$ are $i.i.d.$ copies of $U$ and $\alpha^{j,k}$, $\gamma^k$ denote respectively the intersection local time of $U^j,U^k$ and the renormalized self-intersection local time of $U^k$.

Furthermore, the joint convergence in $k\in\{1,\dots,N\}$ also holds.
\end{theorem}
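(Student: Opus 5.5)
We will reduce the competitive range to quantities already controlled by Le Gall and Rosen. The reduction is an inclusion–exclusion identity. For fixed $k$, write
\begin{equation*}
\R_n^k=\left|X^k(0,n)\right|-\sum_{x}\1\left(T_x^k\le n,\ \exists j\ne k,\ T_x^j\le T_x^k\right).
\end{equation*}
The first term satisfies the Le Gall--Rosen CLT, with limit $-\gamma^k([0,1]^2_\le)$ at scale $h(n)^2b(n)^d/n^2$.

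For the correction term, set $I_n^{j,k}\eqdef\sum_x \1(T_x^j\le T_x^k\le n)$. By inclusion–exclusion, the correction equals $\sum_{j\ne k}I_n^{j,k}$ minus the sites first hit by two or more other walks before $X^k$. The triple-intersection error should be of smaller order than $n^2/(h(n)^2b(n)^d)$ in $L^2$. Its expectation is bounded by a double convolution of Green functions, and in the regime $d/\beta<3/2$ triple intersections of the limiting stable processes are rarer. We will check this with a second-moment bound of the kind used in Le Gall--Rosen. Ties $T_x^j=T_x^k$ are handled in the same way, since the number of collisions is negligible.

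The core step is a scaling limit for $I_n^{j,k}$. We first decompose by the time $X^k$ reaches $x$:
\begin{equation*}
I_n^{j,k}=\sum_{m\le n}\1\left(X_m^k\notin X^k(0,m-1)\right)\1\left(X_m^k\in X^j(0,m)\right).
\end{equation*}
Following Le Gall's approach for the range, we then use that each new site of $X^k$ carries asymptotically the factor $1/h(n)$; this is the escape-probability heuristic. Each new site is hit by $X^j$ about $h(n)$ times, which contributes a further factor $1/h(n)$. Together these give
\begin{equation*}
I_n^{j,k}\approx h(n)^{-2}\sum_{0\le \ell\le m\le n}\1\left(X_m^k=X_\ell^j\right),
\end{equation*}
which is a discrete intersection local time over the ordered simplex. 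The scaled version $\frac{b(n)^d}{n^2}\sum_{\ell\le m}\1(X^k_m=X^j_\ell)$ converges jointly with the walks to $\alpha^{j,k}([0,1]^2_\le)$. This follows by the standard approach for two independent walks: smooth with a mollifier, then use Fourier estimates, which rely on \ref{Assumption A3}. Assumption \ref{Assumption A3} is also what justifies the replacement of $\1(T_x^j\le m)$ by $h(n)^{-1}\times$(the number of visits).

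To make the approximation rigorous, we will follow Le Gall--Rosen: cut $[0,n]$ into dyadic blocks. Within each block we approximate the indicator of a new site by its conditional expectation given the future, up to an $L^2$ error. Then we sum over the blocks. Since the competition term is not renormalized, the centering $\EE[\alpha^{j,k}]$ comes simply from subtracting $\EE[I_n^{j,k}]$.

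Joint convergence of $(X^1,\dots,X^N)$ rescaled, together with all the $\gamma^k$ and $\alpha^{j,k}$, then gives the theorem by Slutsky. The main obstacle is the $L^2$ control of the replacement of the first-hit indicators of $X^j$ (which is not renormalized, unlike $\gamma$) by normalized local times. This matters especially when $d=\beta$, where $h(n)\to\infty$ only slowly and the error has to be shown to be $o(n^2/(h(n)^2b(n)^d))$. We will first try a second-moment computation conditioning on $X^k$, exploiting independence of $X^j$ and the estimate $\PP(T_x^j\le n)\sim G_n(0,x)/h(n)$.
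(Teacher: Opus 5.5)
Your proposal is correct in outline and shares its core with the paper. It uses the same inclusion--exclusion identity $\R_n^k=|X^k(0,n)|-\sum_{j\ne k}\R_n^{j\rightarrow k}+\varepsilon_n^k$, with the $|S|\ge2$ terms bounded by $I_n^{\{a,b,k\}}$ and removed by the triple-intersection moment bound. It uses the same $L^2$ comparison of $h(n)^2\R_n^{j\rightarrow k}$ with the ordered discrete intersection local time $\J_n^{j\rightarrow k}$. It also uses the same mollifier/Fourier argument, which gives convergence of $\J_n^{j\rightarrow k}$ jointly with the rescaled walks.

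The difference lies in how the pieces are assembled. The paper also decomposes $\R_n^{j\rightarrow k}$ dyadically, into block terms $\R_{n,p,\ell}^{j\rightarrow k}$, cross terms $\I_{n,q,m}^{j\rightarrow k}$ and an extra triple-intersection error $\varepsilon_{n,p}^{j\rightarrow k}$. It proves a joint limit theorem for the $\I_{n,q,m}^{j\rightarrow k}$ and runs this in parallel with Le Gall's formula for the range. After centering, everything is then a finite family of dyadic cross terms plus block terms that are negligible uniformly in $n$ as $p\to\infty$.

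You instead treat $\R_n^{j\rightarrow k}$ as a whole. This is legitimate, since $\alpha^{j,k}([0,1]^2_\le)$ is finite and needs no renormalization. It makes the dyadic decomposition of the competition term unnecessary and gives a shorter argument.

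What you must supply in exchange is that the Le Gall--Rosen CLT for $|X^k(0,n)|$ holds jointly with $X^{k,n}$. That is not the statement of their theorem, but it follows from their proof: the $I_{n,q,m}^k$ converge jointly with the walk, and the block terms are uniformly small. You then identify the joint limit as follows. Any subsequential limit $(U^1,\dots,U^N,Y,Z)$ of (walks, range fluctuation, competition term) must have $Y$ and $Z$ almost surely equal to the measurable functionals $-\gamma^k([0,1]^2_\le)$ and $\alpha^{j,k}([0,1]^2_\le)$ of the limit processes. This identification step, not Slutsky, is what combines the pieces. You also need $\frac{h(n)^2b(n)^d}{n^2}\EE[\R_n^{j\rightarrow k}]\to\EE[\alpha^{j,k}([0,1]^2_\le)]$, which follows from uniform integrability via the $L^2$ bound on $I_n^{\{j,k\}}$.

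For the $L^2$ comparison, the second-moment computation you mention last is the paper's route. Because of the ordering $T_x^j\le T_x^k$, however, you need the time-resolved estimate
$\frac{h(n)b(n)^d}{n}\PP_{\lfloor b(n)x\rfloor}(k_1\le T_{\lfloor b(n)y\rfloor}\le k_2)\to\int_{k_1/n}^{k_2/n}p_s(y-x)\,\d s$,
uniformly in $k_1\le k_2\le n$. This is Lemma 6.2 of Le Gall--Rosen, and it is where the differentiability assumption on $\varphi$ enters. The cruder $\PP(T_x^j\le n)\approx G_n(0,x)/h(n)$ is not enough.
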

From the viewpoint of our motivations, it is interesting to compare Theorems \ref{theorem:LLN} and \ref{theorem:CLT}. Indeed, we notice that the effect of the competition between individuals is not observed at all at the first order, but rather starts appearing at the scale of the fluctuations.

Before moving on to some preliminary results that will be required in the proof of Theorems \ref{theorem:LLN} and \ref{theorem:CLT}, we will give a brief overview of the strategy of proof and introduce some further notation. For any $M\ge1$, we will write
\begin{equation*}
    [\![M]\!]\eqdef\{1,\dots,M\}.
\end{equation*}
For any non empty disjoint subsets $A,B\subseteq[\![N]\!]$, we define
\begin{equation*}
    \R_n^{A\rightarrow B}\eqdef\sum_{x\in\ZZ^d}\prod_{(a,b)\in A\times B}\1\left(T_x^a\le T_x^b\le n\right).
\end{equation*}
The quantity $\mathcal R_n^{A\rightarrow B}$ corresponds to the number of distinct sites first visited by every single random walk indexed by $A$, then every single random walk indexed by $B$, all before time $n$. If $A=\{j\}$, then we replace the superscript $\{j\}$ by $j$, and do similarly for $B$.

The starting point in the study of $\R_n^k$ is the inclusion-exclusion principle, which allows us to express $\R_n^k$ as a linear combination of $\left|X^k(0,n)\right|$ and of the $\R_n^{S\rightarrow k}$ for non empty $S\subseteq[\![N]\!]\backslash\{k\}$. Then, we establish a limit theorem for the $\R_n^{A\rightarrow B}$ which allow us to notice that for any non empty $S\subseteq[\![N]\!]\backslash\{k\}$, $\R_n^{S\rightarrow k}$ is negligible $a.s.$ in front of $R_n^k$, which yields the LLN. To study the fluctuations of $\R_n^k$, the limit theorem obtained for $\R_n^{A\rightarrow B}$ will tell us that $\R_n^{S\rightarrow k}$ is of order $\Var(R_n^k)^{1/2}$ if $S$ is a singleton and is negligible in front of $\Var(R_n^k)^{1/2}$ as soon as $|S|\ge2$. To handle $\left|X^k(0,n)\right|$, we use the classical dyadic decomposition formula
\begin{align}
    \label{eq:decomposition for the classic range}
    \left|X^k(0,n)\right|&=\sum_{j=1}^{2^p}\left|X^k\left(\frac{j-1}{2^p}n,\frac{j}{2^p}n\right)\right|\\
    &\hspace{5em}-\sum_{q=1}^p\sum_{h=1}^{2^{q-1}}\left|X^k\left(\frac{2h-2}{2^{q}}n,\frac{2h-1}{2^q}n\right)\cap X^k\left(\frac{2h-1}{2^q}n,\frac{2q}{2^q}n\right)\right|,\nonumber
\end{align}
which holds for $p\ge1$ and is due to Le Gall (\cite{LeGallIntersectionsMarchesAléatoires1986}). We prove an analogous formula for $\R_n^{j\rightarrow k}$ when $j\ne k$ and use it to show that the $\left(\R_n^{j\rightarrow k}\right)_{j\ne k}$ conveniently rescaled converge jointly with $\left|X^k(0,n)\right|$ to their respective limits, which allows us to conclude.

In order to obtain a limit theorem for $\R_n^{A\rightarrow B}$ we start by showing that, up to some correction involving the truncated Green's function, it is asymptotically equivalent in $L^2$ to some discrete intersection local time $\J_n^{A\rightarrow B}$ involving the walks indexed by $A\cup B$. The second part of the proof then follows the lines of Rosen in Theorem $3$ of \cite{RosenRandomWalks&IntersectionLocalTimes1990}, where the author uses the concepts of shadows and links introduced by Dynkin in \cite{DynkinSelf-IntersectionGauge1988}. This method allows us to show that $\J_n^{A\rightarrow B}$ conveniently scaled converges to its continuous counterpart, which is sufficient to conclude.

\section{Preliminaries}
\label{section:Preliminary Results}

\subsection{Regular variation and miscellaneous results}

We shall make use in what follows of properties of regularly varying functions, and so we recall some basic facts. For more information regarding regularly varying functions, one can for example consult \cite{BinghamGoldieTeugelsRegularVariation1987}.

A function $f:\RR_+\rightarrow\RR_+$ is said to be regularly varying at infinity, or for convenience regularly varying, with index $\kappa\in\RR$ if for all $x\in\RR_+$,
\begin{equation}
\label{eq:definition of regular variation}
    \lim_{t\rightarrow\infty}\frac{f(tx)}{f(t)}=x^\kappa.
\end{equation} 

A regularly varying function with index $0$ is referred to as a slowly varying function. Some examples include, but are not limited to, the functions $t\mapsto(\log t)^p$, $p\in\RR$, or functions that are equivalent to a nonzero constant at infinity.

An immediate consequence of the definition (\ref{eq:definition of regular variation}) is that a regularly varying function $f$ of index $\kappa$ can be written
\begin{equation*}
    f(t)=t^\kappa g(t),
\end{equation*}
where $g$ is a slowly varying function. In particular, since $b$ is regularly varying with index $1/\beta$, throughout this work we shall write $b(n)$ as
\begin{equation*}
    b(n)=n^{1/\beta}s(n), \quad n\ge1,
\end{equation*}
where $s:\RR_+\rightarrow\RR_+$ is a slowly varying function.

Another important property is that the convergence in (\ref{eq:definition of regular variation}) is uniform in $x$ on compacts away from $0$. Indeed, for any compact $K\subseteq\RR_+\backslash\{0\}$, we have 
\begin{equation}
\label{limit:uniform convergence on compacts for regularly varying functions}
    \lim_{t\rightarrow\infty}\sup_{x\in K}\left|\frac{f(tx)}{f(t)}-x^\kappa\right|=0.
\end{equation}
The final property we shall use are the Potter bounds, which state that given a function $f$ which is regularly varying of index $\kappa$, for any $\varepsilon,\delta>0$, there is some constant $C>0$ such that for any $s,t>\delta$,
\begin{equation}
    \label{ineq:Potter bounds}
    C^{-1}\left(\frac{s}{t}\right)^{\kappa-\varepsilon}\wedge\left(\frac{s}{t}\right)^{\kappa+\varepsilon}\le\frac{f(s)}{f(t)}\le C\left(\frac{s}{t}\right)^{\kappa-\varepsilon}\wedge\left(\frac{s}{t}\right)^{\kappa+\varepsilon}.
\end{equation}

We will make use of Skorokhod's version of Donsker's Theorem (\cite{SkorokhodLimitTheoremsStochProc1956}), which states that for all $k\in[\![N]\!]$, $T>0$,

\begin{equation}
    \label{convergence:skorokhod's convergence theorem}
    \left(\frac{X_{\lfloor nt\rfloor}^k}{b(n)}\right)_{t\in[0,T]}\distribution\left(U_t\right)_{t\in[0,T]},
\end{equation}
where the convergence occurs in the usual $J_1$ topology on the space of real-valued càdlàg paths on $[0,T]$. When this result is invoked, we shall refer to the scaled random walks as
\begin{equation*}
    X_t^{k,n}\eqdef\frac{X_{\lfloor nt\rfloor}^k}{b(n)},
\end{equation*}
for all $k\in[\![N]\!]$, $t\ge0$, $n\ge1$.

The following limit will be used.
\begin{lemma}
\label{lemma:limit of h(n)b(n)^d/n}
    \begin{equation*}
        \underset{n\rightarrow\infty}\lim\frac{h(n)b(n)^d}{n}=\infty.
    \end{equation*}
\end{lemma}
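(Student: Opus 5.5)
We need $h(n)b(n)^d/n\to\infty$. Write $b(n)^d = n^{d/\beta}s(n)^d$. I will split into two cases according to whether $d/\beta>1$ or $d/\beta=1$.

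\emph{Case $d/\beta>1$.} Since $h(n)=G_n(0,0)\ge 1$ (the term $j=0$), it suffices to show $b(n)^d/n\to\infty$. The function $b(n)^d/n=n^{d/\beta-1}s(n)^d$ is regularly varying with positive index $d/\beta-1$. The Potter bounds (\ref{ineq:Potter bounds}), applied with $\varepsilon<d/\beta-1$, give $b(n)^d/n\ge C^{-1}n^{d/\beta-1-\varepsilon}\,b(1)^d$ for $n$ large, which tends to infinity.

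\emph{Case $d=\beta$.} Now $b(n)^d/n=s(n)^d$ is slowly varying, so the previous argument gives nothing and $h(n)$ must be used. If the walk is transient, $h(n)\to q^{-1}$, and I need $s(n)\to\infty$. Transience means $\sum_j\PP(X_j=0)<\infty$. By the local limit theorem, $\PP(X_j=0)\sim p_1(0)b(j)^{-d}=p_1(0)/(j\,s(j)^d)$; this uses \ref{Assumption A2} and is the same input behind (\ref{limit:trunctaed green's function asymptotics}). So transience is equivalent to $\sum_j 1/(j s(j)^d)<\infty$. I then use monotonicity of $b$: since $b(j)=j^{1/\beta}s(j)$ is increasing, for $j\in[n,2n]$ we have $s(j)^d\le s(2n)^d\cdot 2$ roughly, more precisely $b(j)^d\le b(2n)^d$. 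Hence
\begin{equation*}
\sum_{j=n}^{2n}\frac{1}{b(j)^d}\ge \frac{n}{b(2n)^d}=\frac{1}{2s(2n)^d}.
\end{equation*}
The left side is a tail of a convergent series, so it tends to $0$, forcing $s(n)\to\infty$ along $2n$ and hence along all $n$ by slow variation.

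If the walk is recurrent, I use (\ref{limit:trunctaed green's function asymptotics}): $h(n)\sim p_1(0)\sum_{j\le n}b(j)^{-d}$, and I need $s(n)^d\sum_{j\le n}1/(j s(j)^d)\to\infty$. For fixed $\lambda\in(0,1)$, the bound $b(j)\le b(n)$ for $j\le n$ gives
\begin{equation*}
s(n)^d\sum_{\lambda n\le j\le n}\frac{1}{j\,s(j)^d}=\sum_{\lambda n\le j\le n}\frac{b(n)^d}{n\,b(j)^d}\ge (1-\lambda),
\end{equation*}
which only yields boundedness from below. To obtain divergence I sum over dyadic blocks instead. Uniform convergence (\ref{limit:uniform convergence on compacts for regularly varying functions}) gives $s(j)/s(n)\to1$ uniformly for $j\in[2^{-m}n,n]$ with $m$ fixed. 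Hence $s(n)^d\sum_{2^{-m}n\le j\le n}1/(js(j)^d)\sim\sum 1/j\sim m\log 2$. Letting $m\to\infty$ after $n\to\infty$ gives divergence; this argument in fact works in both the transient and recurrent sub-cases.

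The main obstacle is keeping the case distinction clean and, in the transient sub-case with $d=\beta$, justifying $\PP(X_j=0)\asymp b(j)^{-d}$. I would cite Le Gall--Rosen's local limit theorem for this, as the paper already does for (\ref{limit:trunctaed green's function asymptotics}). In fact the dyadic argument only needs $h(n)\ge c\sum_{j\le n}b(j)^{-d}$ for large $n$, which follows from the local limit theorem in all cases. So the clean route is a single argument: the local limit theorem lower bound, followed by the dyadic block estimate via uniform convergence, with the case $d/\beta>1$ handled trivially.
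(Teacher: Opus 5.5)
Your proposal is correct and follows the paper's proof. For $d>\beta$ you use the Potter bounds together with $h(n)\ge1$. For $d=\beta$ you use uniform convergence of $s(j)/s(n)$ on $[2^{-m}n,n]$ to get $\liminf_n s(n)^d\sum_{j\le n}1/(js(j)^d)\ge m\log 2$, and combine it with $h(n)\gtrsim\sum_{j\le n}b(j)^{-d}$. Your separate transient sub-case, and your remark that only this lower bound on $h(n)$ is needed, are harmless refinements of the paper's direct appeal to $h(n)\sim C\sum_{k\le n}1/(ks(k)^d)$.
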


\begin{proof}
    If $d>\beta$ then by the Potter bounds, for all $\varepsilon>0$ we may find some $C>0$ such that for $n$ sufficiently large, 
    \begin{equation*}
        \frac{h(n)b(n)^d}{n}\ge Cn^{d/\beta-1-\varepsilon},
    \end{equation*}
    which diverges as soon as we take $\varepsilon$ sufficiently small. If $d=\beta$, then the expression reduces to $h(n)s(n)^d$. Recalling $(2.j)$ from \cite{LeGallRosenRangeOfStableRandomWalks1991}, there is some constant $C>0$ such that 
    \begin{equation*}
        h(n)\underset{n\rightarrow\infty}\sim C\sum_{k=1}^n\frac{1}{ks(k)^d}.
    \end{equation*}
    Furthermore, (\ref{limit:uniform convergence on compacts for regularly varying functions}) implies that for all $p\ge1$, 
    \begin{equation*}
        \underset{n\rightarrow\infty}\lim\max_{n/p\le k\le n}\left|\frac{s(n)^d}{s(k)^d}-1\right|=0,
    \end{equation*}
    whence for any $\varepsilon\in(0,1)$, we may find $n$ sufficiently large so that $s(n)^d/s(k)^d\ge1-\varepsilon$ for all $k\in[\![n/p,n]\!]$. In particular,
    \begin{equation*}
        s(n)^d\sum_{k=1}^n\frac{1}{ks(k)^d}\ge(1-\varepsilon)\sum_{k=n/p}^n\frac1k,
    \end{equation*}
    and taking $n\rightarrow\infty$ yields $\underset{n\rightarrow\infty}\liminf s(n)^d\sum_{k=1}^n\frac{1}{ks(k)^d}\ge (1-\varepsilon)\log p$, and since this is true for any $p\ge1$, we get 
    \begin{equation*}
        \underset{n\rightarrow\infty}\lim s(n)^d\sum_{k=1}^n\frac{1}{ks(k)^d}=\infty,
    \end{equation*}
    which allows us to conclude.
\end{proof}

We shall also need some bounds on the moments of the number of sites in common of $(X^s)_{s\in S}$, where $S\subseteq[\![N]\!]$. To this end, we introduce the notation
    \begin{equation*}
        I_n^S\eqdef\left|\bigcap_{s\in S} X^s(0,n)\right|,
    \end{equation*}
    and the result reads
    \begin{proposition}
        \label{proposition:bounds on moments of intersections}
        for any $S\subseteq[\![N]\!]$, $|S|\ge2$, such that $|S|(d-\beta)<d$ and any $k\ge1$, we have 
        \begin{equation*}
        \EE\left[\left(I_n^S\right)^k\right]\lesssim\left(\frac{n^{|S|}}{h(n)^{|S|}b(n)^{d(|S|-1)}}\right)^k.
        \end{equation*}
    \end{proposition}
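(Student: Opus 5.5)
We expand the $k$-th moment as a sum over sites and write it in terms of hitting probabilities. With $m=|S|$, we have
\begin{equation*}
\EE\left[\left(I_n^S\right)^k\right]=\sum_{x_1,\dots,x_k\in\ZZ^d}\PP\left(\forall i\le k,\ T_{x_i}\le n\right)^{m},
\end{equation*}
since the walks $X^s$, $s\in S$, are independent and identically distributed. Here $T_x$ denotes the hitting time of one copy. The problem thus reduces to a single-walk estimate for the probability of visiting $k$ given points before time $n$.

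\textbf{Reduction to Green functions.} We bound this probability by summing over the orders in which the points are first visited, and then applying the strong Markov property at each first-visit time:
\begin{equation*}
\PP\left(\forall i,\ T_{x_i}\le n\right)\le\sum_{\sigma\in\mathfrak S_k}\prod_{i=1}^k\PP_{x_{\sigma(i-1)}}\left(T_{x_{\sigma(i)}}\le n\right),\qquad x_{\sigma(0)}\eqdef0.
\end{equation*}
Next we use the standard last-exit or first-visit bound $\PP_0(T_x\le n)\le G_n(0,x)/h(n/2)$ up to constants, or equivalently $\PP_0(T_x\le n)\le G_{2n}(0,x)/G_n(0,0)$. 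By regular variation, $h(2n)\asymp h(n)$, so this gives $\PP_0(T_x\le n)\lesssim h(n)^{-1}G_{2n}(0,x)$. Raising the bound to the power $m$ and using $(\sum_\sigma)^m\le (k!)^{m-1}\sum_\sigma$, we are left with bounding, for a fixed ordering,
\begin{equation*}
h(n)^{-km}\sum_{x_1,\dots,x_k}\prod_{i=1}^k G_{2n}(x_{i-1},x_i)^m .
\end{equation*}
After the change of variables $y_i=x_i-x_{i-1}$, this factorizes as $h(n)^{-km}\big(\sum_{y}G_{2n}(0,y)^m\big)^k$. It therefore suffices to show
\begin{equation*}
\sum_{y\in\ZZ^d}G_{2n}(0,y)^m\lesssim\frac{n^m}{b(n)^{d(m-1)}}.
\end{equation*}

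\textbf{Key estimate.} We write $G_{2n}(0,y)^m=\sum_{j_1,\dots,j_m\le 2n}\prod_{l}p_{j_l}(y)$, where $p_j(y)\eqdef\PP(X_j=y)$, and sum over $y$. By symmetry we order the times $j_1\le\dots\le j_m$. We then use the local limit bound $\sup_y p_j(y)\lesssim b(j)^{-d}$, which follows from \ref{Assumption A1}, \ref{Assumption A2} and the Gnedenko-type local limit theorem used in \cite{LeGallRosenRangeOfStableRandomWalks1991}. We apply it to all factors except the one with the largest time, whose sum over $y$ equals $1$. This yields
\begin{equation*}
\sum_y G_{2n}(0,y)^m\lesssim\sum_{j_1\le\dots\le j_m\le 2n}\prod_{l=1}^{m-1}b(j_l)^{-d}\lesssim n\left(\sum_{j\le 2n}b(j)^{-d}\right)^{m-1}.
\end{equation*}
This bound is too crude when $d\ge\beta$, because $\sum_j b(j)^{-d}$ behaves like $h(n)$ rather than like $n\,b(n)^{-d}$.

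\textbf{Main obstacle.} The crude bound is where the real difficulty lies, and the hypothesis $m(d-\beta)<d$ must enter exactly here. The fix is to use each $p_{j_l}(y)\le b(j_l)^{-d}$ only to the extent that one term is spread out. Concretely, we first bound $G_{2n}(0,y)$ pointwise, via the local limit theorem together with the decay of $p_j(y)$ for $|y|\gg b(j)$, by a function of the form
\begin{equation*}
G_{2n}(0,y)\lesssim \frac{b^{-1}(|y|)}{|y|^{d}}\wedge h(n)\ \text{ for }|y|\le b(n),\qquad G_{2n}(0,y)\lesssim \frac{n}{b(n)^d}\ \text{ on the bulk }|y|\asymp b(n).
\end{equation*}
We then compute $\sum_y G_{2n}(0,y)^m$ with dyadic shells $|y|\asymp 2^r$. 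The bulk contributes $b(n)^d\,(n/b(n)^d)^m$, which is the target. The shell at radius $\rho\le b(n)$ contributes roughly $\rho^d\,(b^{-1}(\rho)/\rho^d)^m$. With $b^{-1}(\rho)\approx\rho^\beta$ this is $\rho^{\,d-m(d-\beta)}$ up to slowly varying factors. Because $m(d-\beta)<d$, the exponent is positive, so by the Potter bounds the sum over shells is dominated by the top shell $\rho\asymp b(n)$. That top shell again gives $n^m/b(n)^{d(m-1)}$. Tails with $|y|\gg b(n)$ are handled with the tail bound $p_j(y)\lesssim j\,|y|^{-d}b^{-1}(|y|)^{-1}$, which follows from \ref{Assumption A3} by integrating by parts in Fourier space. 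Finally, $h(n)\lesssim n/b(n)^d\cdot$(slowly varying) near the origin, and this matters only when $d=\beta$. In that case we have $m(d-\beta)=0$ and the origin's contribution $h(n)^m$ is negligible by Lemma~\ref{lemma:limit of h(n)b(n)^d/n}. Combining the pieces gives the claim.
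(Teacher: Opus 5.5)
Your reductions are correct. You order first visits with the strong Markov property and use $\PP(T_x\le n)\,h(n)\le G_{2n}(0,x)$, the same inequality the paper proves at $T_x$. After convexity and translation you get $\EE[(I_n^S)^k]\lesssim (h(n)^{-m}\sum_y G_{2n}(0,y)^m)^k$, which is the same quantity the paper reaches. Your direct handling of the $k$-th moment even replaces the paper's appeal to the Le Gall--Rosen remark.

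The gap is in the key estimate $\sum_y G_{2n}(0,y)^m\lesssim n^m/b(n)^{d(m-1)}$. Take the recurrent case $d=\beta$, which lies in the paper's regime (planar walks). There your pointwise bound $G_{2n}(0,y)\lesssim \frac{b^{-1}(|y|)}{|y|^d}\wedge h(n)$ for $|y|\le b(n)$ fails on the whole range $1\le|y|\ll b(n)$, not only at $y=0$. On that range $G_{2n}(0,y)$ is of order $\sum_{b^{-1}(|y|)\le j\le 2n}b(j)^{-d}$. For a planar walk with finite variance this is of order $\log(n/|y|^2)$, whereas $b^{-1}(|y|)/|y|^2\asymp 1$. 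Your last paragraph only removes the site $y=0$. Moreover, Lemma~\ref{lemma:limit of h(n)b(n)^d/n} points the wrong way for that: $h(n)^m$ is negligible because $h$ is slowly varying while the target has positive index of regular variation. You need a bound that keeps the sum over $j\ge b^{-1}(|y|)$. One such bound is $G_{2n}(0,y)\lesssim \frac{n}{b(n)^d}(b(n)/|y|)^{d-\beta+\varepsilon}$ for $0<|y|\le b(n)$, which is what the Green function estimate in Lemma~\ref{lemma:uniform convergence of hitting times} gives with $x=0$. The sum over $0<|y|\le b(n)$ is then dominated by $|y|\asymp b(n)$ as soon as $m(d-\beta+\varepsilon)<d$, in both the transient and recurrent cases.

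Your tail bound $p_j(y)\lesssim j|y|^{-d}/b^{-1}(|y|)$ is also not available. Assumption (A3) only controls $\nabla\varphi$, so one integration by parts gains a single factor, roughly $p_j(y)\lesssim b(j)^{-d}\cdot b(j)/|y|$. Your bound is a local big-jump estimate, and (A1)--(A3) do not imply it. For example, in $d=2$ with $\beta\in(4/3,2)$, mix a lazy nearest-neighbour step with jumps $\pm k^2e_i$ of probability proportional to $k^{-2\beta-1}$. This law satisfies (A1)--(A3) but violates your bound at $j=1$. The weaker one-derivative bound is not summable over $|y|>b(n)$ when $d=m=2$. So handle the tail instead by
$\sum_{|y|>b(n)}G_{2n}(0,y)^m\le (\sup_{|y|>b(n)}G_{2n}(0,y))^{m-1}\sum_y G_{2n}(0,y)\lesssim (n/b(n)^d)^{m-1}(2n+1)$.
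This only requires $\sup_{|y|>b(n)}G_{2n}(0,y)\lesssim n/b(n)^d$. With these two repairs your real-space shell argument works.

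The paper avoids pointwise Green bounds altogether. It writes $\sum_y G_{2n}(0,y)^m$ in Fourier space, where the sum over $y$ forces $z_1+\dots+z_m=0$, and rescales by $b(n)$. It then controls each time sum by Rosen's bound $\frac1n\sum_{k\le n}|\varphi(x/b(n))|^k\lesssim(1+|\overline x|^{\beta(1-\varepsilon)})^{-1}$ and concludes by H\"older. The integrability condition $\frac{m}{m-1}\beta(1-\varepsilon)>d$ is exactly where $m(d-\beta)<d$ enters. This handles $d=\beta$ and $d>\beta$ simultaneously and needs no decay of $p_j(y)$ in $y$.
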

    In the case where $|S|=2$, this result is Corollary $3.2.$ of \cite{LeGallRosenRangeOfStableRandomWalks1991}, but for our purpose we need the result for arbitrary $S$.

\begin{proof}
    By the remark following Lemma 3.1 in \cite{LeGallRosenRangeOfStableRandomWalks1991}, we have $\EE\left[(I_n^S)^k\right]\lesssim\EE\left[I_n^S\right]^k$, and so it is sufficient to treat the case $k=1$. Firstly, note that
    \begin{equation*}
        \EE\left[I_n^S\right]=\sum_{x\in\ZZ^d}\PP\left(x\in X^1(0,n)\right)^{|S|}.
    \end{equation*}
    Then, 
    \begin{equation}
    \label{ineq:easy bound for inequality of moments of intersections}
        \sum_{x\in\ZZ^d}\prod_{s\in S}\EE\left[\1(x\in X^s(0,n)\sum_{j=1}^{2n}\1(X_j^s=x)\right]\le \sum_{x\in\ZZ^d}\prod_{s\in S}\EE\left[\sum_{j=1}^{2n}\1\left(X_j^s=x\right)\right],
    \end{equation}
    and for any $s\in S$, applying the Markov property at $T_x^s$ yields
    \begin{align*}
        \EE\left[\1(x\in X^s(0,n)\sum_{j=1}^{2n}\1(X_j^s=x)\right]&=\EE\left[\sum_{\ell=1}^n\1\left(T_x^s=\ell\right)\sum_{j=\ell}^{2n}\PP_x\left(X_{j-\ell}^s=x\right)\right]\\
        &\ge\PP\left(x\in X^s(0,n)\right)\sum_{j=1}^n\PP_x\left(X_j=x\right)=\PP\left(x\in X^s(0,n)\right) h(n).
    \end{align*}
    Therefore, taking the product over $s\in S$ and summing over $x\in\ZZ^d$, since the $X^s$ are $i.i.d.$, we get 
    \begin{equation*}
        h(n)^{|S|}\EE\left[I_n^S\right]\le\sum_{x\in\ZZ^d}\left(\EE\left[\sum_{j=1}^{2n}\1\left(X_j=x\right)\right]\right)^{|S|},
    \end{equation*}
    where $X$ is distributed as $X^1,\dots,X^N$. Now, by introducing the notation $\TT^d\eqdef[-\pi,\pi]^d$ and using the fact that for any $x,y\in\RR^d$, 
    \begin{equation*}
        \1(x=y)=(2\pi)^{-d}\int_{\TT^d}\d z\exp\left(-\i\langle x-y,z\rangle\right),
    \end{equation*}
    we get, by recalling that $\varphi$ is the characteristic function of $X_1$, 
    \begin{align*}
        h(n)^{|S|}\EE\left[I_n^S\right]&\lesssim\sum_{x\in\ZZ^d}\sum_{j_1,\dots,j_{|S|}=1}^{2n}\int_{\TT^{d|S|}}\d z\ \exp\left(\i\sum_{\ell=1}^{|S|}\langle z_\ell, x\rangle\right)\prod_{\ell=1}^{|S|}\varphi(z_\ell)^{j_\ell}\\
        &=\sum_{j_1,\dots,j_{|S|}=1}^{2n}\int_{\TT^{d(|S|-1)}}\d z\left(\prod_{\ell=1}^{|S|-1}\varphi(z_\ell)^{j_\ell}\right)\\
        &\hspace{13em}\times\sum_{x\in\ZZ^d}\exp\left(\i\sum_{\ell=1}^{|S|-1}\langle z_\ell, x\rangle\right)\int_{\TT^d}\d z_{\ell}\ \mathrm e^{\i\langle z_\ell,x\rangle}\varphi(z_\ell)^{j_{|S|}}\\
        &\lesssim\sum_{j_1,\dots,j_{|S|}=1}^{2n}\int_{\TT^{d(|S|-1)}}\d z\left(\prod_{\ell=1}^{|S|-1}\varphi(z_\ell)^{j_\ell}\right)\sum_{x\in\ZZ^d}\exp\left(\i\sum_{\ell=1}^{|S|-1}\langle z_\ell,x\rangle\right)\widehat{\varphi^{j_{|S|}}}(x)\\
        &\lesssim\sum_{j_1,\dots,j_{|S|}=1}^{2n}\int_{\TT^{d(|S|-1)}}\d z\ \varphi\left(-\sum_{\ell=1}^{|S|-1}z_\ell\right)^{j_{|S|}}\prod_{\ell=1}^{|S|-1}\varphi(z_\ell)^{j_\ell},
    \end{align*}
    where we used Fourier inversion in the third line and Fourier transform in the fourth. Letting $\TT_n^d\eqdef[-\pi b(n),\pi b(n)]^d$, we get
    \begin{equation}
    \label{ineq:last bound for moments of intersections}
        \frac{h(n)^{|S|}b(n)^{d(|S|-1)}}{n^{|S|}}\EE\left[I_n^S\right]\lesssim\frac{1}{n^{|S|}}\sum_{j_1,\dots,j_{|S|}=1}^{2n}\int_{\TT_n^{d(|S|-1)}}\d z\ \varphi\left(-\sum_{\ell=1}^{|S|-1}\frac{z_\ell}{b(n)}\right)^{j_{|S|}}\prod_{\ell=1}^{|S|-1}\varphi\left(\frac{z_\ell}{b(n)}\right)^{j_\ell}.
    \end{equation}
    According to $(5.15)$ in \cite{RosenRandomWalks&IntersectionLocalTimes1990}, for any $\varepsilon>0$ sufficiently small and for all $x\in\RR^d$, $n\ge1$,
    \begin{equation}
    \label{ineq:riemann sum of scaled characteristic is bounded by a power of x}
        \frac{1}{n}\sum_{k=0}^n\left|\varphi\left(\frac{x}{b(n)}\right)\right|^k\lesssim\ \frac{1}{1+\overline x^{\beta(1-\varepsilon)}}=:f(\overline x),
    \end{equation}
    where $\overline{x}\equiv x\pmod{\pi b(n)} $. In particular, by writing $z_{|S|}\eqdef-\sum_{\ell=1}^{|S|-1}z_\ell$ and defining for $z\in\RR^{d(|S|-1)}$ and $m\in[\![|S|-1]\!]$ the functions
    \begin{equation*}
        F_m(z)\eqdef f(z_{|S|})\prod_{\substack{\ell=1 \\ \ell\ne m}}^{|S|-1}f(z_\ell),\quad  
        F_{|S|}(z)\eqdef\prod_{\ell=1}^{|S|-1}f(z_\ell),
    \end{equation*}
    then using \eqref{ineq:riemann sum of scaled characteristic is bounded by a power of x}, the right hand side of \eqref{ineq:last bound for moments of intersections} is bounded up to a constant by 
    \begin{equation*}
        \int_{\TT_n^{d(|S|-1)}}\d z\prod_{m=1}^{|S|}F_m(\overline z)^{(|S|-1)^{-1}}\lesssim \prod_{m=1}^{|S|}\left(\int_{\RR^{d(|S|-1)}}\d z \ F_m(z)^{\frac{|S|}{|S|-1}}\right)^{1/|S|},
    \end{equation*}
    by Hölder's inequality.
    Examining the integral coordinate by coordinate, it is clear that the term $m=|S|$ in the previous expression is finite so long as $\frac{|S|}{|S|-1}(\beta-\varepsilon)>d$. Thanks to our assumption, we may choose $\varepsilon$ sufficiently small so that this is indeed the case. If $m\ne|S|$, then by letting $\widetilde f(x)\eqdef f(x)^{\frac{|S|}{|S|-1}}$ for $x\in\RR^d$ and $\widetilde F_m$ similarly, then
    \begin{equation*}
        \int_{\RR^{d(|S|-1)}}\d z\ \widetilde F_m(z)=\int_{\RR^{d(|S|-2)}}\d z_1\dots\d z_{m-1}\d z_{m+1}\dots\d z_{|S|-1}\left(\prod_{\substack{\ell=1 \\ \ell\ne m}}^{|S|-1}\widetilde f(z_\ell)\right)\int_{\RR^d}\d z_m\ \widetilde f(z_{|S|}).
    \end{equation*}
    The innermost integral is simply a power of the $L^p$ norm of $f$ with $p=\frac{|S|}{|S|-1}$. As before, this quantity is bounded if $\frac{|S|}{|S|-1}(\beta-\varepsilon)>d$, and then so is the rest of the integral, which is sufficient to conclude.
\end{proof}

\begin{remark}
    As proved in \cite{RosenRandomWalks&IntersectionLocalTimes1990}, inequality \eqref{ineq:riemann sum of scaled characteristic is bounded by a power of x} is true only when $\beta>1$. Indeed, the proof relies on showing that a certain inequality (noted (5.8) in \cite{RosenRandomWalks&IntersectionLocalTimes1990}) over the intervals $I(n)\eqdef[1/2b(n),1/b(n)]$ for $n\ge n_0$ where $n_0$ is taken sufficiently large, and then showing that the intervals $I(2^kn_0)$, $k\ge0$, overlap two-by-two when $\beta>1$, in such a way that the inequality actually holds over a neighbourhood of the origin. The intervals overlap two-by-two when $n_0$ is sufficiently large because of the fact that that $b$ is regularly varying of index $1/\beta$, which yields
    \begin{equation*}
        \frac{2b(n)}{b(2n)}\underset{n\rightarrow\infty}\longrightarrow \frac{2}{2^{1/\beta}}>1,
    \end{equation*}
    and so $1/b(2n)>1/2b(n)$ for sufficiently large $n$.

    However, \eqref{ineq:riemann sum of scaled characteristic is bounded by a power of x} still holds for any $\beta>0$. Indeed, for any $\alpha\in(1,2)$, one can consider the intervals $I(\lfloor\alpha^k\rfloor n_0)$, $k\ge0$, which will overlap two-by-two when $n_0$ is taken sufficiently large so long as 
    \begin{equation*}
        \frac{2}{\alpha^{1/\beta}}>1\iff \beta>\frac{\log\alpha}{\log 2}.
    \end{equation*}
    This proves the claim since we can take $\alpha$ arbitrarily close to $1$.
\end{remark}

We end this section with a bound involving the Fourier transform of a probability density function in the Schwartz space $\mathcal S(\RR^d)$ which will be useful in Section \ref{section:Limit Theorems for R^(A->B) and I^(A->B)}.

\begin{lemma}
    \label{ineq:bound for products of Fourier transform}
    Let $f\in\mathcal S(\RR^d)$ and suppose that $\int_{\RR^d}\d x f(x)=1$. We let $\hat f$ denote the Fourier transform of $f$. Then for any $n\ge1$, $\delta\in(0,1)$, we may find a constant $C$ such that for any $x_1,\dots,x_n\in\RR^d$,
    \begin{equation*}
        \left|1-\prod_{k=1}^n\hat f(x_k)\right|\le C\sum_{k=1}^n|x_k|^\delta.
    \end{equation*}
\end{lemma}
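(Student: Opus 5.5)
The plan is to reduce to a one-variable estimate for $\hat f$ and then telescope the product. Since $f$ is a probability density, $\hat f(0)=\int_{\RR^d}f=1$ and $|\hat f(x)|\le\int_{\RR^d}|f|=\|f\|_{L^1}\eqdef M$ for every $x$. Since $f\in\mathcal S(\RR^d)$, $\hat f\in\mathcal S(\RR^d)$ as well; in particular $\hat f$ is $C^1$ with bounded gradient, because $|\nabla\hat f(x)|\le\int_{\RR^d}|y||f(y)|\,\d y<\infty$. The first step is the single-factor bound
\begin{equation*}
    |1-\hat f(x)|\le C_0|x|^\delta,\qquad x\in\RR^d.
\end{equation*}
For $|x|\le1$, the mean value theorem gives $|1-\hat f(x)|=|\hat f(0)-\hat f(x)|\le\|\nabla\hat f\|_\infty|x|\le\|\nabla\hat f\|_\infty|x|^\delta$, since $\delta\in(0,1)$. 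For $|x|>1$, we simply use $|1-\hat f(x)|\le1+M\le(1+M)|x|^\delta$. We may therefore take $C_0=\max(\|\nabla\hat f\|_\infty,1+M)$.

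The second step is the telescoping identity
\begin{equation*}
    1-\prod_{k=1}^n\hat f(x_k)=\sum_{k=1}^n\left(\prod_{\ell<k}\hat f(x_\ell)\right)\left(1-\hat f(x_k)\right),
\end{equation*}
with the convention that the empty product equals $1$. Each prefactor $\prod_{\ell<k}\hat f(x_\ell)$ is bounded in modulus by $M^{k-1}\le\max(1,M)^{n-1}$. Combining this with the first step gives the claim with $C=C_0\max(1,M)^{n-1}$, which depends only on $f$, $n$ and $\delta$, as the statement allows.

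There is no real obstacle here. The one point to watch is that $f$ is not assumed nonnegative, so $|\hat f|\le1$ need not hold; this is why the bounds above use $M=\|f\|_{L^1}$ rather than $1$. If $f$ is a genuine probability density, then $M=1$ and the constant simplifies to $C=C_0$, independent of $n$. The restriction $\delta<1$ is only used to pass from $|x|$ to $|x|^\delta$ when $|x|\le1$.
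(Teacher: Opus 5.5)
Your proof is correct and follows the paper's argument essentially step for step. Both prove the single-factor bound $|1-\hat f(x)|\le C_0|x|^\delta$ via the mean value theorem for $|x|\le 1$ and a crude bound for $|x|>1$, then apply the same telescoping identity with the prefactors bounded by a power of $\sup|\hat f|$. The only differences are cosmetic: you bound $|\hat f|$ by $\|f\|_{L^1}$ rather than $\|\hat f\|_\infty$, and your final inequality correctly keeps $|x_k|^\delta$, where the paper's last line writes $|x_k|$.
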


\begin{proof}
    Since $\int_{\RR^d}f=1$, $\hat f(0)=1$ and since $f\in\mathcal S(\RR^d)$, we also have $\hat f\in\mathcal S(\RR^d)$. In particular for $|x|\le 1$, the mean-value theorem yields
    \begin{equation*}
        \left|1-\hat f(x)\right|\le\lVert\nabla \hat f\rVert_\infty|x|\le \lVert\nabla \hat f\rVert_\infty|x|^\delta.
    \end{equation*}
    Noting that for $|x|>1$, we can simply write
    \begin{equation*}
        \left|1-\hat f(x)\right|\le 1+\lVert \hat f\rVert_\infty\le (1+\lVert \hat f\rVert_\infty)|x|^\delta,
    \end{equation*}
    whence $|1-\hat f(x)|\le C|x|^\delta$  for some $C>0$ uniformly for $x\in\RR^d$. Letting $x_0=0$, we now have 
    \begin{align*}
        \left|1-\prod_{k=1}^n\hat f(x_k)\right|&=\left|\sum_{k=1}^n \left(1-\hat f(x_k)\right)\prod_{j=0}^{k-1}\hat f(x_j)\right|\\
        &\le \sum_{k=1}^n\lVert \hat f\rVert_\infty^k\left|1-\hat f(x_k)\right|\\
        &\le C\lVert\hat f\rVert_\infty^n\sum_{k=1}^n|x_k|,
    \end{align*}
    since $\lVert\hat f\rVert_\infty\ge1$, which concludes.
\end{proof}

\subsection{Intersection Local Times of Stable Processes}

If one considers $N$ independent $\beta$-stable Lévy processes $U^1,\dots,U^N$ in $\RR^d$, then it was shown by Taylor (\cite{TaylorMultPointsStable1966}) that their paths $a.s.$ share common points if and only if $N(d-\beta)<d$. When this is the case, one can construct a non-trivial random measure $\alpha^{[\![N]\!]}$ on $\RR_+^N$ defined by the formal expression
\begin{equation*}
    \alpha^{[\![N]\!]}(A)\eqdef\int_A\d t_1\dots\d t_k\prod_{k=1}^{N-1}\delta_0\left(U_{t_{k+1}}^{k+1}-U_{t_k}^k\right),
\end{equation*}
where $A$ is any Borel set in $\RR_+^N$. The measure $\alpha^{[\![N]\!]}$ is supported on the random set 
\begin{equation*}
    \left\{(t_1,\dots,t_N)\in\RR_+^N, U_{t_1}^1=\dots=U_{t_N}^N\right\},
\end{equation*}
and is referred to as the intersection local time of the processes $U^1,\dots, U^N$. The measure $\alpha^{[\![N]\!]}$ is non-trivial in the sense that, for example, $\alpha^{[\![N]\!]}([0,1]^N)\in(0,\infty)$ $a.s.$. One way to construct $\alpha^{[\![N]\!]}$ is to define a smoothed version $\alpha_\varepsilon^{[\![N]\!]}$ by replacing $\delta_0$ by a mollifier $f_\varepsilon$, and then to show that letting $\varepsilon\rightarrow 0$ yields an $a.s.$ limit (see for instance \cite{ChenRosen2005}, Theorem $9$). Once we know $\alpha^{[\![N]\!]}$ is well-defined, then one can also define $\alpha^S$, for any $S\eqdef\{s_1,\dots,s_{|S|}\}\subseteq[\![N]\!]$ with $|S|\ge2$, as
\begin{equation*}
    \alpha^S(A)\eqdef\int_A\d t_1\dots\d t_{|S|}\prod_{k=1}^{|S|-1}\delta_0\left(U_{t_{k+1}}^{s_{k+1}}-U_{t_k}^{s_k}\right), \quad A\subseteq\RR_+^N.
\end{equation*}
If $S\eqdef\{j,k\}$, then we shall write $\alpha^{j,k}$ instead of $\alpha^{\{j,k\}}$.

A problem that is dual to studying intersection of $N$ independent $\beta$-stable processes is to study the $N$-fold self-intersections of a single $\beta$-stable process $U$. Focusing on the case $N=2$, in the same spirit as what we previously saw, one is tempted to define the self-intersection local time of $U^k$ as the random measure
\begin{equation*}
    \alpha^k\left(A\right)\eqdef\int_A\d s \d t\ \delta_0\left(U_t^k-U_s^k\right),\quad A\subseteq\left(\RR_+^2\right)_\le\eqdef\{(s,t)\in\RR_+^2, s\le t\}.
\end{equation*}
By the Markov property, $\alpha^k$ is well defined on sets of the form $[a,b)\times(c,d]$ with $b\le c$ thanks to the equality in distribution
\begin{equation*}
    \alpha^k([a,b)\times(c,d])\overset{\mathrm{(d)}}=\widetilde \alpha([a,b)\times(c,d]),
\end{equation*}
where $U,\widetilde U$ are independent and distributed as $U^k$ and $\widetilde\alpha$ denotes the intersection local time of $U,\widetilde U$. In particular, by letting 
\begin{equation*}
    A_{i,j}\eqdef\left[\frac{2i-2}{2^j},\frac{2i-1}{2^j}\right)\times\left(\frac{2i-1}{2^j},\frac{2i}{2^j}\right],\quad j\ge1,i\in[\![2^{j-1}]\!],
\end{equation*}
Then $\alpha^k(A_{i,j})<\infty$ $a.s.$ for all admissible $(i,j)$. However, one can show through a scaling argument that
\begin{equation*}
    \sum_{j\ge1}\sum_{i=1}^{2^{j-1}}\EE\left[\alpha^k(A_{i,j})\right]=\infty,\quad  \sum_{j\ge1}\sum_{i=1}^{2^{j-1}}\mathrm{Var}\left(\alpha^k\left(A_{i,j}\right)\right)<\infty.
\end{equation*}
As a direct consequence, $\alpha^k\left([0,1]_\le^2\right)=\infty$ $a.s.$, but
\begin{equation*}
    \gamma^k\left([0,1]_\le^2\right)\eqdef\sum_{j\ge1}\sum_{i=1}^{2^{j-1}}\left(\alpha^k(A_{i,j})-\EE\left[\alpha^k(A_{i,j})\right]\right)
\end{equation*}
converges in $L^2$. The quantity $\gamma^k\left([0,1]_\le^2\right)$ is referred to as the renormalized self-intersection local time of $U^k$. For more information regarding self-intersection local times of stable processes, one may consult \cite{BassChenRosenLDSILT}, \cite{ChenRosen2005}.

\section{Dyadic decomposition for the competitive range}
A central tool in studying limit theorems for the range of $X^k$ is the decomposition formula \eqref{eq:decomposition for the classic range}. For our purposes, we wish to establish an analogous formula for $\mathcal R_n^k$.

\begin{proposition}
\label{proposition:decomposition formula for mathcal R_n^k}
    We have the following decomposition formula for $\R_n^k$
    \begin{equation}
    \label{eq:decomposition formula for R_n^k}
        \R_n^k=\left|X^k(0,n)\right|-\sum_{j\ne k}\R_n^{j\rightarrow k}+\varepsilon_n^k,
    \end{equation}
    where $\varepsilon_n^k$ is an error term such that 
    \begin{equation}
        \label{limit:E[(epsilon_n^k)^2] is small}
        \lim_{n\rightarrow\infty}\frac{h(n)^4b(n)^{2d}}{n^4}\EE\left[\left(\varepsilon_{n}^k\right)^2\right]=0.
    \end{equation}
\end{proposition}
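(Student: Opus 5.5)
We express $\R_n^k$ through inclusion--exclusion at each site. Fix $x\in\ZZ^d$ with $T_x^k\le n$. Up to the event that two walks discover $x$ at the same time, $x$ contributes to $\R_n^k$ exactly when no $j\ne k$ satisfies $T_x^j\le T_x^k$. Writing $E_j\eqdef\{T_x^j\le T_x^k\le n\}$, we have
\begin{equation*}
\1\left(T_x^k\le n,\ T_x^k<\min_{j\ne k}T_x^j\right)=\1(T_x^k\le n)-\1\Big(\bigcup_{j\ne k}E_j\Big)
\end{equation*}
exactly, provided we use the inequality $\le$ consistently; ties $T_x^j=T_x^k$ then fall into $E_j$, so no separate correction is needed for them. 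Inclusion--exclusion gives
\begin{equation*}
\1\Big(\bigcup_{j\ne k}E_j\Big)=\sum_{j\ne k}\1(E_j)+\sum_{\substack{S\subseteq[\![N]\!]\backslash\{k\}\\|S|\ge2}}(-1)^{|S|+1}\prod_{s\in S}\1(E_s).
\end{equation*}
Summing over $x$ yields \eqref{eq:decomposition formula for R_n^k} with
\begin{equation*}
\varepsilon_n^k=\sum_{|S|\ge2}(-1)^{|S|}\R_n^{S\rightarrow k}.
\end{equation*}
The identity is therefore purely combinatorial. The entire content of the proposition is the bound \eqref{limit:E[(epsilon_n^k)^2] is small}.

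Since the sum is finite, it suffices to show $\EE[(\R_n^{S\rightarrow k})^2]=o\big(n^4/(h(n)^4b(n)^{2d})\big)$ for each $S$ with $|S|\ge2$. Monotonicity gives
\begin{equation*}
0\le\R_n^{S\rightarrow k}\le\R_n^{\{s_1,s_2\}\rightarrow k}\le I_n^{\{s_1,s_2,k\}}
\end{equation*}
for any two $s_1,s_2\in S$, so it is enough to treat triple intersections.

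To bound the triple intersection we use Proposition \ref{proposition:bounds on moments of intersections} with $|S|=3$ and $k=2$. This gives
\begin{equation*}
\EE\big[(I_n^{\{s_1,s_2,k\}})^2\big]\lesssim\frac{n^6}{h(n)^6b(n)^{4d}},
\end{equation*}
and the proposition applies as long as $3(d-\beta)<d$, i.e. $d/\beta<3/2$, which is exactly assumption \ref{Assumption A1}. Multiplying by $h(n)^4b(n)^{2d}/n^4$ leaves
\begin{equation*}
\frac{n^2}{h(n)^2b(n)^{2d}}=\left(\frac{h(n)b(n)^d}{n}\right)^{-2},
\end{equation*}
which tends to $0$ by Lemma \ref{lemma:limit of h(n)b(n)^d/n}. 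This proves the claim.

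The main obstacle I expect is checking that Proposition \ref{proposition:bounds on moments of intersections} holds with the exponent needed for three walks. In its proof, the Hölder step requires $\tfrac32(\beta-\varepsilon)>d$, which again holds exactly in the regime $d/\beta<3/2$ for $\varepsilon$ small. I also need to confirm that the moment bound $\EE[(I_n^S)^k]\lesssim\EE[I_n^S]^k$ from Le Gall--Rosen extends to three walks. If it does not, a direct second-moment expansion $\EE[(I_n^S)^2]=\sum_{x,y}\PP(x,y\in X(0,n))^{|S|}$, combined with the bound $\PP(x,y\in X(0,n))\le\PP(T_x\le n)G_{2n}(x,y)/h(n)+\PP(T_y\le n)G_{2n}(y,x)/h(n)$ and the same Fourier estimates, should give the same order.
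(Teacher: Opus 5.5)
Your proposal is correct and follows the paper's proof essentially step for step. It uses the same inclusion--exclusion identity with $\varepsilon_n^k=\sum_{|S|\ge2}(-1)^{|S|}\R_n^{S\rightarrow k}$, the same domination $\R_n^{S\rightarrow k}\le I_n^{\{a,b,k\}}$, and closes via Proposition~\ref{proposition:bounds on moments of intersections} for three walks (valid since $d/\beta<3/2$) together with Lemma~\ref{lemma:limit of h(n)b(n)^d/n}; your explicit rate $\bigl(h(n)b(n)^d/n\bigr)^{-2}$ simply spells out what the paper leaves implicit.
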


\begin{proof}
    This formula is a simple consequence of the inclusion-exclusion principle. Indeed,
    \begin{align*}
        \R_n^k&=\sum_{x\in\ZZ^d}\1\left(T_x^k\le n,T_x^k<\underset{j\ne k}\min\  T_x^j\right)\\
        &=\sum_{x\in\ZZ^d}\1\left(T_x^k\le n\right)\prod_{j\ne k}\left(1-\1\left(T_x^j\le T_x^k\right)\right)\\
        &=\sum_{x\in\ZZ^d}\1\left(T_x^k\le n\right)-\sum_{j\ne k}\1\left(T_x^k\le n\right)\1\left(T_x^j\le T_x^k\right)\\
        &\hspace{10em}+\sum_{\substack{S\subseteq[\![N]\!]\backslash\{k\} \\ |S|\ge2}}(-1)^{|S|}\sum_{x\in\ZZ^d}\1\left(T_x^k\le n\right)\prod_{s\in S}\1\left(T_x^s\le T_x^k\right)\\
        &=\left|X^k(0,n)\right|-\sum_{j\ne k}\R_n^{j\rightarrow k}+\sum_{\substack{S\subseteq[\![N]\!]\backslash\{k\} \\ |S|\ge2}}(-1)^{|S|}\R_n^{S\rightarrow k}.
    \end{align*}
    We conclude by letting $\varepsilon_n^k$ denote the last sum in the previous expression. To see why (\ref{limit:E[(epsilon_n^k)^2] is small}) holds, it suffices to notice that for any $S\subseteq[\![N]\!]\backslash\{k\}$ with $|S|\ge2$ and for any $a,b\in S$, $a\ne b$, one has $\R_n^{S\rightarrow k}\le I_n^{\{a,b,k\}}$ and then apply Proposition \ref{proposition:bounds on moments of intersections} and Lemma \ref{lemma:limit of h(n)b(n)^d/n}.
\end{proof}

\begin{proposition}
\label{proposition:decomposition of R_n^(j->k)}
    We have the following identity 
    \begin{equation}
    \label{eq:decomposition of R_n^(j->k)}
        \mathcal R_n^{j\rightarrow k}=\sum_{\ell=1}^{2^p}\mathcal R_{n,p,\ell}^{j\rightarrow k}+\sum_{q=1}^{p}\sum_{m=1}^{2^{q-1}}\mathcal I_{n,q,m}^{j\rightarrow k}+\varepsilon_{n,p}^{j\rightarrow k},
    \end{equation}
    where for each $p\ge1$, $\ell\in[\![2^p]\!]$, $q\in[\![p]\!]$, $m\in[\![2^{q-1}]\!]$, we define 
    \begin{equation}
    \label{eq:defintion of R_(n,p,ell)^(j->k)}
        \mathcal R_{n,p,\ell}^{j\rightarrow k}\eqdef\sum_{x\in\ZZ^d}\1\left(T_x^j\circ\theta_{\frac{\ell-1}{2^p}n}\le T_x^k\circ\theta_{\frac{\ell-1}{2^p}n}\le n/2^p\right),
    \end{equation}
    \begin{equation}
    \label{eq:defintion of I_(n,q,m)^(j->k)}
        \mathcal I_{n,q,m}^{j\rightarrow k}\eqdef\sum_{x\in\ZZ^d}\1\left(T_x^j\circ\theta_{\frac{m-1}{2^{q-1}}n}<n/2^q<T_x^k\circ\theta_{\frac{m-1}{2^{q-1}}n}\le n/2^{q-1}\right),
    \end{equation}
    and where $\varepsilon_{n,p}^{j\rightarrow k}$ is an error term such that 
    \begin{equation*}
        \underset{n\rightarrow\infty}\lim\frac{h(n)^4b(n)^{2d}}{n^4}\EE\left[(\varepsilon_{n,p}^{j\rightarrow k})^2\right]=0.
    \end{equation*}
    The variables $(\mathcal R_{n,p,\ell}^{j\rightarrow k})_{1\le \ell\le 2^p}$ are mutually independent and equal in distribution to $\mathcal R_{n/2^p}^{j\rightarrow k}$.
\end{proposition}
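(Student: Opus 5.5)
We prove the decomposition by induction on $p$, so the key step is the case $p=1$ on a generic time window. Write $n'=n/2$ and set $\theta=\theta_{n'}$. A site $x$ is counted in $\R_n^{j\rightarrow k}$ exactly when $T_x^j\le T_x^k\le n$. We split according to where $T_x^j$ and $T_x^k$ fall relative to $n'$.

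The first case is $T_x^k\le n'$. This gives exactly $\R_{n,1,1}^{j\rightarrow k}$.

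The second case is $T_x^j<n'<T_x^k\le n$. This gives exactly $\I_{n,1,1}^{j\rightarrow k}$.

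The third case is $n'\le T_x^j\le T_x^k\le n$, meaning both first visits happen in the second half. We compare this count with $\R_{n,1,2}^{j\rightarrow k}=\sum_x\1(T_x^j\circ\theta\le T_x^k\circ\theta\le n')$. The two differ only on sites visited by $X^j$ or $X^k$ in both halves, since otherwise first hitting after time $n'$ coincides with first hitting of the shifted walk. More precisely, a site counted in one but not the other must satisfy one of two conditions:
\begin{itemize}
\item[(a)] it is visited by $X^k$ in both $[0,n']$ and $[n',n]$, and also by $X^j$ somewhere in $[0,n]$;
\item[(b)] it is visited by $X^j$ in both halves and by $X^k$ in $[n',n]$.
\end{itemize}
The boundary convention at the single time $n'$ only costs an $O(1)$ term. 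Hence
\[
\bigl|\varepsilon_{n,1}^{j\rightarrow k}\bigr|\le \left|X^k(0,n')\cap X^k(n',n)\cap X^j(0,n)\right|+\left|X^j(0,n')\cap X^j(n',n)\cap X^k(0,n)\right|+O(1).
\]
The walks $X^j,X^k$ restarted at time $n'$ are independent of their pasts, by the Markov property. Each term is therefore a sum of at most two intersections of three independent walk pieces, each of length at most $n$ (split $X^j(0,n)$ into its two halves). Proposition \ref{proposition:bounds on moments of intersections} with $|S|=3$ applies, since $3(d-\beta)<d$ iff $d/\beta<3/2$. It gives a second moment $\lesssim\bigl(n^3/(h(n)^3b(n)^{2d})\bigr)^2$. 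Multiplying by $h(n)^4b(n)^{2d}/n^4$ yields $\bigl(n/(h(n)b(n)^d)\bigr)^2\,h(n)^{-2}$, which tends to $0$ by Lemma \ref{lemma:limit of h(n)b(n)^d/n}, since $h$ is bounded below. This settles $p=1$.

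For general $p$, we apply the $p=1$ identity successively to each $\R_{n,q,m}^{j\rightarrow k}$ on its dyadic window $[\tfrac{m-1}{2^{q-1}}n,\tfrac{m}{2^{q-1}}n]$, using the shifted walks. At level $q$ this produces the terms $\I_{n,q,m}^{j\rightarrow k}$, the next-level $\R$'s, and error terms. We take $\varepsilon_{n,p}^{j\rightarrow k}$ to be the sum of all these errors. There are finitely many of them, at most $2^p$, so by Minkowski's inequality it suffices to bound each one in $L^2$. Each error is bounded exactly as above with $n$ replaced by $n/2^{q-1}$. Monotonicity in $n$ of the intersection counts, or regular variation of $h$ and $b$ with $p$ fixed, shows that the bound at scale $n/2^{q-1}$ is comparable to the one at scale $n$.

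Independence and equality in law of $(\R_{n,p,\ell}^{j\rightarrow k})_\ell$ follow from the Markov property at the times $\tfrac{\ell-1}{2^p}n$. Each $\R_{n,p,\ell}^{j\rightarrow k}$ depends only on the increments of $(X^j,X^k)$ on the $\ell$-th window, and both walks start at $0$ after recentering, since the count is translation invariant. These increments are independent across $\ell$ and have the law of $(X^j,X^k)$ run for time $n/2^p$.

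The main obstacle is the careful case analysis in the $p=1$ step. We must verify that every discrepancy site really lies in a triple intersection involving a self-intersection of one walk across the cut. We must also check that the independence needed for Proposition \ref{proposition:bounds on moments of intersections} holds for the two halves of the same walk; the Markov property at $n'$ gives this, after translating by $X_{n'}$.
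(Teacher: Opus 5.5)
Your three-way split at $n/2$, your description of the discrepancy sites (a)/(b) (correct, up to $O(1)$ boundary sites), the appeal to Proposition~\ref{proposition:bounds on moments of intersections} with $|S|=3$, and the induction over dyadic windows are the paper's own route.

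\textbf{The last step fails.} The claim ``both walks start at $0$ after recentering, since the count is translation invariant'' is false. A translation moves the two walks by the same vector, and the identity requires the non-recentred shift $\theta_t$; that is exactly why your third case matches $\mathcal R_{n,1,2}^{j\rightarrow k}$. On the window starting at $t=\frac{\ell-1}{2^p}n$, the shifted walks start at $X_t^j$ and $X_t^k$. So, conditionally on $\mathcal F_t$, $\mathcal R_{n,p,\ell}^{j\rightarrow k}$ is distributed as $\mathcal R_{n/2^p}^{j\rightarrow k}$ computed for $X^j$ started at $0$ and $X^k$ started at $X_t^k-X_t^j$. This displacement is of order $b(t)$ and is determined by the earlier windows. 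For $\ell\ge2$, both the equality in law and the independence fail.

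Concretely, take simple random walks on $\ZZ^2$, which satisfy (A1)--(A3), with $n=2$, $p=1$, and let $e_1$ be the first basis vector. Then $\mathcal R_{2,1,1}^{j\rightarrow k}=\mathcal R_1^{j\rightarrow k}\ge1$ always, since the origin is counted. But $\mathcal R_{2,1,2}^{j\rightarrow k}=0$ on the event $\{X_1^j=e_1,X_2^j=2e_1,X_1^k=-e_1,X_2^k=-2e_1\}$, so the laws differ. Independence also fails: $\mathcal R_{2,1,1}^{j\rightarrow k}=1+\1(X_1^j=X_1^k)$, and $X_1^j=X_1^k$ forces $\mathcal R_{2,1,2}^{j\rightarrow k}\ge1$.

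The paper's one-line appeal to the Markov property for $\sigma(X^j)\otimes\sigma(X^k)$ has the same defect. This sentence of the statement is therefore false as written and must be replaced by the conditional description above. What survives is what the later variance estimate needs. By Cauchy--Schwarz, for any fixed displacement $z$, the second moment of the count for walks started at $0$ and $z$ is at most $\sum_{x,y}\PP(x,y\in X(0,m))^2=\EE[(I_m^{\{j,k\}})^2]$, with $m=n/2^p$. Hence the martingale differences $\mathcal R_{n,p,\ell}^{j\rightarrow k}-\EE[\mathcal R_{n,p,\ell}^{j\rightarrow k}\mid\mathcal F_t]$ sum to something with second moment at most $2^p\,\EE[(I_m^{\{j,k\}})^2]$, as in the i.i.d.\ case. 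The predictable part $\sum_\ell\EE[\mathcal R_{n,p,\ell}^{j\rightarrow k}\mid\mathcal F_t]$ must then be controlled separately.

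\textbf{A smaller gap in the error bound,} which the paper also passes over. Proposition~\ref{proposition:bounds on moments of intersections} concerns i.i.d.\ walks all started at the origin, and $X^k(0,n')$, $X^k(n',n)$ are not independent sets. Translating by $X_{n'}^k$, as you propose, makes them independent ranges from $0$, the first being that of a walk with step law $-X_1$. But then $X^j(0,n)$ starts at $-X_{n'}^k$, the endpoint of the reversed piece. It is neither at the common origin nor independent of the other two pieces, and splitting $X^j(0,n)$ into halves does not change this. The bound $\EE[|X^k(0,n')\cap X^k(n',n)\cap X^j(0,n)|^2]\lesssim\bigl(n^3/(h(n)^3b(n)^{2d})\bigr)^2$ is very plausibly true, since the three pieces still start within distance $O(b(n))$ of each other. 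However, it needs an extension of Proposition~\ref{proposition:bounds on moments of intersections} to this chained configuration (for instance by rerunning its Fourier computation), not a direct citation.

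Finally, $\frac{n^6}{h(n)^6b(n)^{4d}}\cdot\frac{h(n)^4b(n)^{2d}}{n^4}=\bigl(\frac{n}{h(n)b(n)^d}\bigr)^2$, without your extra factor $h(n)^{-2}$. This does not affect the conclusion.
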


\begin{proof}
    We notice that for $j\ne k$,
    \begin{align*}
        \1\left(T_x^j\le T_x^k\le n\right)&=\1\left(T_x^j\le T_x^k\le n/2\right)+\1\left(T_x^j<n/2<T_x^k\le n\right)\\
        &\hspace{1em}+\1\left(n/2<T_x^j\le T_x^k\le n\right)\\
        &=\1\left(T_x^j\le T_x^k\le n/2\right)+\1\left(T_x^j\circ\theta_{\frac n2}\le T_x^k\circ\theta_{\frac n2}\le n/2\right)\\
        &\hspace{1em}+\1\left(T_x^j<n/2<T_x^k\le n\right)-\1\left(T_x^j\circ\theta_{\frac n2}\le T_x^k\circ\theta_{\frac n2}\le n/2; T_x^j\le n/2\right)\\
        &\hspace{1em}-\1\left(T_x^j\circ\theta_{\frac n2}\le T_x^k\circ\theta_{\frac n2}\le n/2; T_x^k\le n/2\right)\\
        &\hspace{1em}+\1\left(T_x^j\circ\theta_{\frac n2}\le T_x^k\circ\theta_{\frac n2}\le n/2; T_x^j,T_x^k\le n/2\right).
    \end{align*}
    Repeating this procedure on the two first terms, we successively get
    \begin{align*}
        \1\left(T_x^j\le T_x^k\le n/2\right)&=\1\left(T_x^j\le T_x^k\le n/4\right)+\1\left(T_x^j\circ\theta_{\frac n4}\le T_x^k\circ\theta_{\frac n4}\le n/4\right)\\
        &\hspace{1em}+\1\left(T_x^j<n/4<T_x^k\le n/2\right)-\1\left(T_x^j\circ\theta_{\frac n4}\le T_x^k\circ\theta_{\frac n2}\le n/4; T_x^j\le n/4\right)\\
        &\hspace{1em}-\1\left(T_x^j\circ\theta_{\frac n4}\le T_x^k\circ\theta_{\frac n4}\le n/4; T_x^k\le n/4\right)\\
        &\hspace{1em}+\1\left(T_x^j\circ\theta_{\frac n4}\le T_x^k\circ\theta_{\frac n4}\le n/4; T_x^j,T_x^k\le n/4\right),
    \end{align*}
    and
    \begin{align*}
        \1\left(T_x^j\circ\theta_{\frac n2}\le T_x^k\circ\theta_{\frac n2}\le n/2\right)&=\1\left(T_x^j\circ\theta_{\frac n2}\le T_x^k\circ\theta_{\frac n2}\le n/4\right)+\1\left(T_x^j\circ\theta_{\frac {3n}4}\le T_x^k\circ\theta_{\frac{3n}4}\le n/4\right)\\
        &\hspace{1em}+\1\left(T_x^j\circ\theta_{\frac n2}<n/4<T_x^k\circ\theta_{\frac n2}\le n/2\right)\\
        &\hspace{1em}-\1\left(T_x^j\circ\theta_{\frac {3n}4}\le T_x^k\circ\theta_{\frac{3n}4}\le n/4; T_x^j\circ\theta_{\frac n2}\le n/4\right)\\
        &\hspace{1em}-\1\left(T_x^j\circ\theta_{\frac{3n}4}\le T_x^k\circ\theta_{\frac{3n}4}\le n/4; T_x^k\circ\theta_{\frac n2}\le n/4\right)\\
        &\hspace{1em}+\1\left(T_x^j\circ\theta_{\frac{3n}4}\le T_x^k\circ\theta_{\frac{3n}4}\le n/4; T_x^j\circ\theta_{\frac n2},T_x^k\circ\theta_{\frac n2}\le n/4\right).
    \end{align*}
    Combining the three previous expressions, we get
    \begin{align*}
        \1\left(T_x^j\le T_x^k\le n\right)=&\ \sum_{\ell=1}^{4}\1\left(T_x^j\circ\theta_{\frac{\ell-1}{4}n}\le T_x^k\circ\theta_{\frac{\ell-1}{4}n}\le n/4\right)\\
        &+\sum_{q=1}^2\sum_{m=1}^{2^{q-1}}\1\left(T_x^j\circ\theta_{\frac{m-1}{2^{q-1}}n}<n/2^q<T_x^k\circ\theta_{\frac{m-1}{2^{q-1}}n}\le n/2^{q-1}\right)\\
        &+\sum_{w=1}^2\sum_{r=1}^{2^{w-1}}\1\left(T_x^j\circ \theta_{\frac{2r-1}{2^w}n}\le T_x^k\circ\theta_{\frac{2r-1}{2^w}n}\le n/2^w\right)\\
        &\quad\quad\times\sum_{S\subseteq\{j,k\}}(-1)^{|S|}\prod_{s\in S}\1\left(T_x^s\circ\theta_{\frac{2r-2}{2^w}n}\le n/2^w\right),
    \end{align*}
    and thus an immediate induction yields the following identity, which holds for any $p\ge1$,
    \begin{align*}
        \1\left(T_x^j\le T_x^k\le n\right)=&\ \sum_{\ell=1}^{2^p}\1\left(T_x^j\circ\theta_{\frac{\ell-1}{2^p}n}\le T_x^k\circ\theta_{\frac{\ell-1}{2^p}n}\le n/2^p\right)\\
        &+\sum_{q=1}^p\sum_{m=1}^{2^{q-1}}\1\left(T_x^j\circ\theta_{\frac{m-1}{2^{q-1}}n}<n/2^q<T_x^k\circ\theta_{\frac{m-1}{2^{q-1}}n}\le n/2^{q-1}\right)\\
        &+\sum_{w=1}^p\sum_{r=1}^{2^{w-1}}\1\left(T_x^j\circ \theta_{\frac{2r-1}{2^w}n}\le T_x^k\circ\theta_{\frac{2r-1}{2^w}n}\le n/2^w\right)\\
        &\quad\quad\times\sum_{S\subseteq\{j,k\}}(-1)^{|S|}\prod_{s\in S}\1\left(T_x^s\circ\theta_{\frac{2r-2}{2^w}n}\le n/2^w\right).
    \end{align*}
    Therefore, introducing the quantities
    \begin{equation*}
        \varepsilon_{n,w,r}^{j\rightarrow k}\eqdef\sum_{x\in\ZZ^d}\1\left(T_x^j\circ \theta_{\frac{2r-1}{2^w}n}\le T_x^k\circ\theta_{\frac{2r-1}{2^w}n}\le n/2^w\right)\sum_{S\subseteq\{j,k\}}(-1)^{|S|}\prod_{s\in S}\1\left(T_x^s\circ\theta_{\frac{2r-2}{2^w}n}\le n/2^w\right),
    \end{equation*}
    \begin{equation*}
        \varepsilon_{n,p}^{j\rightarrow k}\eqdef\sum_{w=1}^p\sum_{r=1}^{2^{q-1}}\varepsilon_{n,w,r}^{j\rightarrow k},
    \end{equation*}
    and letting $\mathcal R_{n,p,\ell}^{j\rightarrow k}$, $\mathcal I_{n,q,m}^{j\rightarrow k}$ be as in \eqref{eq:defintion of R_(n,p,ell)^(j->k)} and (\ref{eq:defintion of I_(n,q,m)^(j->k)}) respectively, we have the following
    \begin{equation*}
        \mathcal R_n^{j\rightarrow k}=\sum_{\ell=1}^{2^p}\mathcal R_{n,p,\ell}^{j\rightarrow k}+\sum_{q=1}^p\sum_{m=1}^{2^{q-1}}\mathcal I_{n,q,m}^{j\rightarrow k}+\varepsilon_{n,p}^{j\rightarrow k}.
    \end{equation*}
    Fixing $p\ge1$, it is clear by applying the Markov property with respect to the filtration $\sigma(X^j)\otimes\sigma(X^k)$ that the $(\mathcal R_{n,p,\ell}^{j\rightarrow k})_{1\le\ell\le 2^p}$ are $i.i.d.$ and distributed as $\mathcal R_{n/2^p}^{j\rightarrow k}$. It remains to estimate the second moments of $\varepsilon_{n,p}^{j\rightarrow k}$. To this end, we notice that
    \begin{align*}
        \EE\left[(\varepsilon_{n,w,r}^{j\rightarrow k})^2\right]&\lesssim\sum_{x_1,x_2\in\ZZ^d}\sum_{S_1,S_2\subseteq\{j,k\}}\PP\left(T_{x_i}^j\circ\theta_{\frac{2r-1}{2^w}n}\le T_{x_i}^{k}\circ\theta_{\frac{2r-1}{2^w}n}\le n/2^w,\right.\\
        &\hspace{15em}\left.\forall s\in S_1\cup S_2, T_{x_i}^s\circ\theta_{\frac{2r-2}{2^w}n}\le n/2^w, i\in\{1,2\}\right)\\
        &\lesssim\sum_{x_1,x_2}\sum_{h\in\{j,k\}}\PP\left(T_{x_i}^j\circ\theta_{\frac{2r-1}{2^w}n}\le T_{x_i}^{k}\circ\theta_{\frac{2r-1}{2^w}n}\le n/2^w,T_{x_i}^h\circ\theta_{\frac{2r-2}{2^w}n}\le n/2^w, i\in\{1,2\}\right)\\
        &\lesssim\sum_{x_1,x_2}\sum_{h\in\{j,k\}}\PP\left(x_1,x_2\in X^h\left(\frac{2r-2}{2^w}n,\frac{2r-1}{2^w}n\right)\cap X^{j}\left(\frac{2r-1}{2^w}n,\frac{2r}{2^w}n\right)\right.\\
        &\hspace{15em}\left.\cap\ X^{k}\left(\frac{2r-1}{2^w}n,\frac{2r}{2^w}n\right)\right)\\
        &=\sum_{\{a,b\}\in\{\{j,k\},\{k,j\}\}}
        \EE\left[\left(\sum_{x}\1\left(x\in\widetilde X^a(0,n/2^w)\cap \widehat X^a(0,n/2^w)\cap \widetilde X^{b}(0,n/2^w)\right)\right)^2\right]\\
        &\lesssim\sum_{\{a,b\}\in\{\{j,k\},\{k,j\}\}}\EE\left[\left|\widetilde X^a(0,n)\cap \widehat{X}^a(0,n)\cap \widetilde X^{b}(0,n)\right|\right]^2\lesssim\frac{n^6}{h(n)^6b(n)^{4d}},
    \end{align*}
    where $\widetilde X^h$ and $\widehat X^h$ are $i.i.d.$ copies of $X^h$, and $\widetilde X^{h'}$ is an $i.i.d.$ copy of $X^{h'}$, where we applied Proposition \ref{proposition:bounds on moments of intersections} given that $d/\beta<3/2$. Consequently,
    \begin{align*}
        \frac{h(n)^4b(n)^{2d}}{n^4}\EE\left[(\varepsilon_{n,p}^{j\rightarrow k})^2\right]&=\frac{h(n)^4b(n)^{2d}}{n^4}\sum_{w_1,w_2=1}^p\sum_{r_i=1}^{2^{w_i-1}}\EE\left[\varepsilon_{n,w_1,r_1}^{j\rightarrow k}\varepsilon_{n,w_2,r_2}^{j\rightarrow k}\right]\\
        &\lesssim\frac{h(n)^4b(n)^{2d}}{n^4}\sum_{w_1,w_2=1}^p\sum_{r_i=1}^{2^{w_i-1}}\left(\EE\left[\left(\varepsilon_{n,w_1,r_1}^{j\rightarrow k}\right)^2\right]\EE\left[\left(\varepsilon_{n,w_2,r_2}^{j\rightarrow k}\right)^2\right]\right)^{1/2}\\
        &\lesssim\frac{n^2}{h(n)^2b(n)^{2d}},
    \end{align*}
    which converges to $0$ as $n\rightarrow\infty$ by Lemma \ref{lemma:limit of h(n)b(n)^d/n}.

\end{proof}

\section{Asymptotic behaviour of ordered intersections}
\label{section:Limit Theorems for R^(A->B) and I^(A->B)}

The aim of this section is to establish limit theorems for the quantities $\R_n^{A\rightarrow B}$ and $\I_{n,q,m}^{j\rightarrow k}$. We will see that up to some scaling, both random variables converge in distribution to some intersection local time involving the limiting $\beta$-stable processes. Also, for our purposes, we establish convergence in mean and joint convergence in $j,k,q,m$ for the $\I_{.,q,m}^{j\rightarrow k}$. To start off, we recall the following result from \cite{LeGallRosenRangeOfStableRandomWalks1991}. Here, for any $t>0$, $p_t$ denotes the density of the random variable $U_t$. 

\begin{lemma}[Lemma $6.2.$, \cite{LeGallRosenRangeOfStableRandomWalks1991}]
\label{lemma:uniform convergence of hitting times}
    Suppose that $d-1<\beta\le d$. Then for all $\ell\in[\![N]\!]$ and each $x,y\in\RR^d$ with $x\neq y$,
    \begin{equation}
    \label{limit:uniform convergence discrete -> continuous I}
        \underset{n\rightarrow\infty}\lim\ \underset{0\le k_1\le k_2\le n}\sup\left|\frac{h(n)b(n)^d}{n}\PP_{\lfloor b(n) x\rfloor}\left(k_1\le T_{\lfloor b(n) y\rfloor}^\ell\le k_2\right)-\int_{k_1/n}^{k_2/n}\d s\ p_s(y-x)\right|=0,
    \end{equation}
    \begin{equation}
    \label{limit:uniform convergence discrete -> continuous II}
        \underset{n\rightarrow\infty}\lim\ \underset{0\le k_1\le k_2\le n}\sup\left|\frac{b(n)^d}{n}\sum_{j=k_1}^{k_2}\PP_{\lfloor b(n)x\rfloor}\left(X_j^\ell=\lfloor b(n) y\rfloor\right)-\int_{k_1/n}^{k_2/n}\d s\ p_s(y-x)\right|=0.
    \end{equation}
    Furthermore, for any $\varepsilon>0$ and $x,y\in\RR^d$,
    \begin{equation}
    \label{ineq:bound for dominated convergence theorem}
       \frac{h(n)b(n)^d}{n}\PP_{\lfloor b(n)x\rfloor}\left(T_{\lfloor b(n) y\rfloor}^\ell\le n\right)\quad\text{and}\quad\frac{b(n)^d}{n}\sum_{k=0}^n\PP_{\lfloor b(n)x\rfloor}\left(X_k^\ell=\lfloor b(n) y\rfloor\right)
    \end{equation}
    are both $\lesssim|x-y|^{\beta-d-\varepsilon}+|x-y|^{\beta-d+\varepsilon}$ uniformly in $n$.
\end{lemma}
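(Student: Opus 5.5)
The statement is Lemma 6.2 of \cite{LeGallRosenRangeOfStableRandomWalks1991}, so the plan is to reconstruct Le Gall and Rosen's argument. We start with \eqref{limit:uniform convergence discrete -> continuous II}, since it only involves the transition probabilities. Write $z_n\eqdef\lfloor b(n)y\rfloor-\lfloor b(n)x\rfloor$. By Fourier inversion on $\TT^d$,
\begin{equation*}
\PP_{\lfloor b(n)x\rfloor}\left(X_j^\ell=\lfloor b(n)y\rfloor\right)=(2\pi)^{-d}\int_{\TT^d}\d z\ \mathrm e^{-\i\langle z_n,z\rangle}\varphi(z)^j .
\end{equation*}
We then apply the local limit theorem, which holds under \ref{Assumption A1} and \ref{Assumption A2}:
\begin{equation*}
\sup_{w}\left|b(j)^d\PP(X_j=w)-p_1\left(w/b(j)\right)\right|\to0 .
\end{equation*}
Combining this with the scaling $p_s(u)=b(n)^{d}b(ns)^{-d}\,p_1\left(u\,b(n)/b(ns)\right)$, approximately, the regular variation \eqref{limit:uniform convergence on compacts for regularly varying functions} turns $\frac{b(n)^d}{n}\sum_{j=k_1}^{k_2}$ into a Riemann sum for $\int_{k_1/n}^{k_2/n}p_s(y-x)\,\d s$. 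The sum over $j\le\delta n$ needs separate treatment. Because $x\ne y$, the quantity $\frac{b(n)^d}{n}\sum_{j\le\delta n}\PP(X_j=z_n)$ is small uniformly in $n$. We get this by using \eqref{ineq:riemann sum of scaled characteristic is bounded by a power of x} together with an integration by parts in Fourier, which is where \ref{Assumption A3} enters: it controls $\nabla\varphi$ and yields decay of order $|z_n|^{-1}$. Alternatively, a tail bound $\PP(|X_j|>c\,b(n))\to0$ for $j\le\delta n$ would do. Uniformity in $k_1\le k_2$ is automatic, because the limiting integrand is integrable on $[0,1]$ and the errors are controlled in total variation over $j$.

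For \eqref{limit:uniform convergence discrete -> continuous I} we use a last-exit (or first-passage) decomposition:
\begin{equation*}
\sum_{j\le k}\PP_a\left(X_j=b\right)=\sum_{i\le k}\PP_a\left(T_b=i\right)G_{k-i}(0,0).
\end{equation*}
Since $h$ is slowly varying (this uses $d=\beta$ in the recurrent case, and $h\to q^{-1}$ in the transient one), for $k-i\ge\eta n$ we have $G_{k-i}(0,0)=h(n)(1+o(1))$. Hence
\begin{equation*}
h(n)\,\PP_a\left(k_1\le T_b\le k_2\right)\approx\sum_{j=k_1}^{k_2}\PP_a\left(X_j=b\right)
\end{equation*}
up to boundary contributions from windows of length $\eta n$. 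The main obstacle is controlling these windows, that is, hitting times close to $k_2$. There the Green function is $G_{m}(0,0)$ with $m$ small, and the ratio $G_m/h(n)$ is not close to $1$. We would handle this with a monotonicity sandwich. The upper bound on $h(n)\PP(T\le k_2)$ comes from the decomposition at time $k_2+\eta n$. For the lower bound we use $G_{k_2-i}\le h(n)$ together with the decomposition at $k_2$. Letting $\eta\to0$ and using the continuity of $t\mapsto\int_0^t p_s(y-x)\,\d s$ then gives both bounds. The uniformity in $(k_1,k_2)$ follows by monotonicity in $k_1,k_2$, with a finite grid argument as in Dini's theorem.

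Finally, for the bound \eqref{ineq:bound for dominated convergence theorem}, the Green-function sum is handled by \eqref{ineq:riemann sum of scaled characteristic is bounded by a power of x}. In Fourier, $\frac{b(n)^d}{n}\sum_{k\le n}\PP(X_k=z_n)$ is the Fourier transform at $(y-x)$ of a function bounded by $f$. Pairing this with the integration by parts permitted by \ref{Assumption A3} gives decay $|x-y|^{\beta-d\pm\varepsilon}$, with the small-$|x-y|$ regime coming from $f\in L^1$ fails, handled by splitting frequencies at $|x-y|^{-1}$, and Potter bounds \eqref{ineq:Potter bounds} absorbing the slowly varying corrections. The hitting bound then follows from the last-exit identity, using $G_{n}\ge h(n)$ applied at horizon $2n$, which reduces it to the same Green-function sum with $n$ replaced by $2n$.
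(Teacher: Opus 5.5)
The paper gives no proof of this lemma; it is quoted from Le Gall--Rosen, so your outline has to stand on its own. Its architecture is sound. The local limit theorem on $j\in[\delta n,n]$ produces the Riemann sum. Your first-passage sandwich
\[
\sum_{j\le k}\PP_a(X_j=b)\;\le\; h(n)\,\PP_a(T_b\le k)\;\le\;\frac{h(n)}{h(\eta n)}\sum_{j\le k+\eta n}\PP_a(X_j=b),\qquad k\le n,
\]
combined with the slow variation of $h$, is a complete derivation of \eqref{limit:uniform convergence discrete -> continuous I} from \eqref{limit:uniform convergence discrete -> continuous II}, the latter run up to horizon $(1+\eta)n$. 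Taking $\eta n=n$, it also reduces the hitting part of \eqref{ineq:bound for dominated convergence theorem} to $G_{2n}$.

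The gap is the small-time estimate $\lim_{\delta\to0}\limsup_n\frac{b(n)^d}{n}\sum_{j\le\delta n}\PP(X_j=z_n)=0$, together with the large-$|x-y|$ part of \eqref{ineq:bound for dominated convergence theorem}. This is the only place where \ref{Assumption A3} and $d-1<\beta$ enter. You assert it rather than prove it, you never use $d-1<\beta$, and neither justification you offer works as stated.

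First, a qualitative tail bound $\PP(|X_j|>cb(n))\to0$ says nothing about the point mass at $z_n$. The only local bound you then have, $\PP(X_j=z)\lesssim b(j)^{-d}$, gives $\frac{b(n)^d}{n}\sum_{j\le\delta n}b(j)^{-d}\asymp\frac{h(n)b(n)^d}{n}\to\infty$ by Lemma \ref{lemma:limit of h(n)b(n)^d/n}.

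Second, \eqref{ineq:riemann sum of scaled characteristic is bounded by a power of x} is too lossy once you differentiate. It only yields $\sum_{j\le m}j|\varphi(\theta)|^{j-1}\lesssim m^2 f(b(m)\theta)$. For $|\theta|\asymp1$ this is roughly of order $m$, whereas the true size there is $O(1)$ by \ref{Assumption A2}. After one integration by parts, that loss makes the bound on $\frac{b(n)^d}{n}\sum_{j\le\delta n}\PP(X_j=z_n)$ diverge.

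Your integration-by-parts idea is the right one, but it must be done per $j$. Integrate by parts once on $\TT^d$; there are no boundary terms since $z\in\ZZ^d$, and the singularity of $\nabla\varphi$ at $0$ is integrable. Then use \ref{Assumption A3}, the bound $|\varphi(\theta)|\le\exp(-c/b^{-1}(1/|\theta|))$ near $0$, and \ref{Assumption A2} away from $0$. This gives $\PP(X_j=z)\lesssim b(j)^{1-d}/|z|$ for $z\ne0$.

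Because $(d-1)/\beta<1$, Karamata gives $\sum_{j\le m}b(j)^{1-d}\asymp m\,b(m)^{1-d}$. Hence $\frac{b(n)^d}{n}\sum_{j\le\delta n}\PP(X_j=z_n)\lesssim\delta^{1-(d-1)/\beta-\varepsilon}|x-y|^{-1}$. This is exactly where $d-1<\beta$ is used.

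The same two pointwise bounds $\PP(X_j=z)\lesssim\min\{b(j)^{-d},\,b(j)^{1-d}/|z|\}$ also handle the Green-function part of \eqref{ineq:bound for dominated convergence theorem}, directly in physical space. Split the sum over $j\le n$ at $j_0=b^{-1}(|z_n|)$ and apply Potter bounds. This gives $\lesssim|x-y|^{\beta-d-\varepsilon}$ for $|x-y|\le1$, and $\lesssim|x-y|^{-1}\le|x-y|^{\beta-d+\varepsilon}$ for $|x-y|\ge1$, the last step again using $\beta>d-1$.

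This avoids your Fourier route. There $f\notin L^1(\RR^d)$ because $\beta(1-\varepsilon)<d$, and the high frequencies after one derivative would need a resolvent-type bound like $(1-|\varphi|)^{-2}$, not \eqref{ineq:riemann sum of scaled characteristic is bounded by a power of x}.

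Alternatively, your tail-bound idea can be made quantitative. Write $\PP(X_j=z)\le 2\sup_w\PP(X_{\lceil j/2\rceil}=w)\,\PP(|X_{\lfloor j/2\rfloor}|\ge|z|/2)\lesssim j\,b(j)^{-d}/b^{-1}(|z|/2)$. Summing over $j\le\delta n$ gives a bound $C_{x,y}\,\delta^{2-d/\beta-\varepsilon}$. This version uses $d/\beta<2$, which holds in this paper's regime, in place of \ref{Assumption A3}.
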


We may now move to the main results of this section. Define for all $d\ge1, \sigma\in\mathfrak S([\![d]\!])$,
\begin{equation*}
    \Delta_\sigma\eqdef\left\{(s_1,\dots, s_d)\in[0,1]^d\mid s_{\sigma(1)}\le\dots\le s_{\sigma(d)}\right\}.
\end{equation*}
Furthermore, we define for $A,B\subseteq[\![1,N]\!]$ disjoint
\begin{equation*}
    \mathfrak S^{A\rightarrow B}\eqdef\left\{\sigma\in\mathfrak S(A\cup B)\mid \forall (a,b)\in A\times B,\sigma(a)\le\sigma(b)\right\},
\end{equation*}
and
\begin{equation*}
    \Delta^{A\rightarrow B}\eqdef\bigcup_{\sigma\in\mathfrak S^{A\rightarrow B}}\Delta_\sigma.
\end{equation*}
We also introduce the discrete analogues
\begin{equation*}
    \Delta_{n,\sigma}\eqdef\left\{(k_1,\dots, k_d)\in[\![n]\!]^d\mid k_{\sigma(1)}\le\dots\le k_{\sigma(d)}\right\},
\end{equation*}
and
\begin{equation*}
    \Delta_n^{A\rightarrow B}\eqdef\bigcup_{\sigma\in\mathfrak S^{A\rightarrow B}}\Delta_{n,\sigma}.
\end{equation*}
We recall that we work under assumptions \ref{Assumption A1}, \ref{Assumption A2}, \ref{Assumption A3}.

\begin{theorem}
\label{theorem:convergence of R_n^(A->B)}
    So long as $|A\cup B|(d-\beta)<d$, we have the following convergence in distribution
    \begin{equation}
        \label{convergence:limit of R_n^(A->B)}
        \frac{h(n)^{|A\cup B|}b(n)^{d(|A\cup B|-1)}}{n^{|A\cup B|}}\mathcal R_n^{A\rightarrow B}\distribution \alpha^{A\cup B}\left(\Delta^{A\rightarrow B}\right),
    \end{equation}
\end{theorem}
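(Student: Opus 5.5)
We follow the two-step strategy outlined in Section \ref{susbection:main results, strategy of proof}. Write $M\eqdef|A\cup B|$ and $c_n\eqdef h(n)^Mb(n)^{d(M-1)}/n^M$. Introduce the discrete ordered intersection local time
\begin{equation*}
    \J_n^{A\rightarrow B}\eqdef\sum_{x\in\ZZ^d}\sum_{(k_s)_{s\in A\cup B}\in\Delta_n^{A\rightarrow B}}\prod_{s\in A\cup B}\1\left(X_{k_s}^s=x\right),
\end{equation*}
where $\Delta_n^{A\rightarrow B}$ is indexed by $A\cup B$ in the obvious way. The first step is to show that
\begin{equation*}
    c_n\mathcal R_n^{A\rightarrow B}-\frac{h(n)^M b(n)^{d(M-1)}}{n^M}h(n)^{-M}\J_n^{A\rightarrow B}=c_n\mathcal R_n^{A\rightarrow B}-\frac{b(n)^{d(M-1)}}{n^M}\J_n^{A\rightarrow B}\longrightarrow0
\end{equation*}
in $L^2$. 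Heuristically, each visit of $X^s$ to $x$ at its first hitting time $T_x^s$ is followed by on average $h(n)$ returns. Hence $\J_n^{A\rightarrow B}\approx h(n)^M\mathcal R_n^{A\rightarrow B}$, up to the boundary effect where the return clusters of different walks interlace or exceed time $n$.

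Concretely, we expand the second moment of the difference as a sum over $x_1,x_2\in\ZZ^d$ of products over $s$. By independence of the walks and the Markov property at $T_{x_i}^s$, each factor reduces to one-walk quantities of the form $\PP(T_x\le n)$, $\sum_k\PP(X_k=x)$ and their two-point analogues. The first moment comparison uses the identity $\EE[\sum_{k\le n}\1(X_k=x)]=\sum_{\ell\le n}\PP(T_x=\ell)G_{n-\ell}(0,0)$, together with $G_{n-\ell}(0,0)/h(n)\to1$ uniformly for $\ell\le(1-\eta)n$ (slow variation of $h$). The discrepancy coming from pairs where the ordering of the $T_x^s$ differs from the ordering of the visit times $k_s$ is handled by the same estimate: it forces two walks to visit $x$ within a window of size $o(n)$ around each other, or a walk to hit $x$ after time $(1-\eta)n$. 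Both events have vanishing rescaled contribution. All such sums are dominated via the bounds \eqref{ineq:bound for dominated convergence theorem} of Lemma \ref{lemma:uniform convergence of hitting times} and the moment bound of Proposition \ref{proposition:bounds on moments of intersections}, whose integrability condition is exactly $M(d-\beta)<d$. The pointwise limits of the rescaled terms are given by \eqref{limit:uniform convergence discrete -> continuous I}--\eqref{limit:uniform convergence discrete -> continuous II}, so that dominated convergence shows the cross terms and squared terms converge to the same limit. Thus the difference vanishes in $L^2$.

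The second step is to show that $\frac{b(n)^{d(M-1)}}{n^M}\J_n^{A\rightarrow B}\distribution\alpha^{A\cup B}(\Delta^{A\rightarrow B})$, following Rosen's Theorem 3 in \cite{RosenRandomWalks&IntersectionLocalTimes1990}. Fix a Schwartz probability density $f$ and set $f_\varepsilon=\varepsilon^{-d}f(\cdot/\varepsilon)$. Define the smoothed continuum functional $\alpha_\varepsilon^{A\cup B}(\Delta^{A\rightarrow B})$ and its discrete counterpart $\J_{n,\varepsilon}$, in which $\1(X_{k_s}^s=x)$ is replaced by $f_\varepsilon$ evaluated at $X^{s,n}_{k_s/n}-y$, integrated in $y$. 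By Skorokhod's theorem \eqref{convergence:skorokhod's convergence theorem} and the continuity of the smoothed functional on the Skorokhod space (the boundaries of $\Delta_\sigma$ are Lebesgue-null, and the stable processes a.s. have no jump at fixed times), $\J_{n,\varepsilon}$ converges in law to $\alpha_\varepsilon$ for each fixed $\varepsilon$. Moreover $\alpha_\varepsilon\to\alpha^{A\cup B}(\Delta^{A\rightarrow B})$ in $L^2$ as $\varepsilon\to0$. It therefore suffices to prove
\begin{equation*}
    \lim_{\varepsilon\to0}\limsup_{n\to\infty}\EE\left[\left(\frac{b(n)^{d(M-1)}}{n^M}\J_n^{A\rightarrow B}-\J_{n,\varepsilon}\right)^2\right]=0.
\end{equation*}
We write this second moment in Fourier variables, as in the proof of Proposition \ref{proposition:bounds on moments of intersections}. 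The difference produces factors $1-\prod\hat f(\varepsilon z_\ell)$, which Lemma \ref{ineq:bound for products of Fourier transform} bounds by $C\varepsilon^\delta\sum|z_\ell|^\delta$. The remaining integral is controlled by \eqref{ineq:riemann sum of scaled characteristic is bounded by a power of x} and Hölder's inequality, exactly as before, once $\delta$ is small enough that the extra $|z|^\delta$ preserves integrability given $M(d-\beta)<d$.

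The main obstacle is this last Fourier estimate. In the second moment, the two copies of the time variables interlace, and the increments of each walk between consecutive times involve sums of several frequency variables. Dynkin's shadows and links bookkeeping is needed to attribute to each time increment a frequency variable on which \eqref{ineq:riemann sum of scaled characteristic is bounded by a power of x} applies. The ordering constraint $\Delta_n^{A\rightarrow B}$ only restricts the time domain, so it can be dropped via an upper bound by the unordered sum. The core difficulty is thus the same as in Rosen's argument for the full cube, and I would reproduce his case analysis with the stated power bounds.
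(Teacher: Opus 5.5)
Your proposal is correct and follows the paper's route: an $L^2$ comparison of $h(n)^{|A\cup B|}\R_n^{A\rightarrow B}$ with $\J_n^{A\rightarrow B}$ through second-moment expansions, Lemma \ref{lemma:uniform convergence of hitting times} and dominated convergence, then a smoothed functional, Skorokhod's theorem, and a Fourier $L^2$ bound built from \eqref{ineq:riemann sum of scaled characteristic is bounded by a power of x}, Lemma \ref{ineq:bound for products of Fourier transform} and H\"older. In the first step you will need the paper's spatial cutoff $|x|\le Kb(n)$, since the dominating function is not integrable at infinity; beyond that the differences are cosmetic (the paper smooths consecutive differences $X^\ell-X^{\ell-1}$ rather than against a common point $y$, and because each walk carries one time index per copy, the case analysis you defer to Rosen reduces to splitting on whether $k_\ell^1\le k_\ell^2$ for each $\ell$).
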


We follow the strategy of proof used in \cite{LeGallRosenRangeOfStableRandomWalks1991}. We start by introducing a discrete intersection local time $\mathcal J_n^{A\rightarrow B}$ for which we can directly study the convergence in distribution. We shall then show that, up to some scaling, this discrete intersection local time is asymptotically equivalent to $\mathcal R_n^{A\rightarrow B}$ in a certain strong sense, which is enough to conclude. We start by introducing the quantity $\mathcal J_n^{A\rightarrow B}$. Supposing without loss of generality that $A\eqdef[\![|A|]\!]$, $B=|A|+[\![|B|]\!]$, we let
\begin{equation*}
    \mathcal J_n^{A\rightarrow B}\eqdef\sum_{x\in\ZZ^d}\sum_{k\in \Delta_n^{A\rightarrow B}}\prod_{\ell\in A\cup B}\1\left(X_{k_\ell}^{\ell}=x\right).
\end{equation*}
The process $\mathcal J_n^{A\rightarrow B}$ is a discrete intersection local time that counts the time that the walks $(X^b)_{b\in B}$ spend intersecting the past of the walks $(X^a)_{a\in A}$. Firstly, we will establish that, up to scaling, $\R_n^{A\rightarrow B}$ and $\J_n^{A\rightarrow B}$ are asymptotically equivalent in $L^2$.

\begin{proposition}
\label{proposition: h(n)^(a+b)R_n^(A->B) and J_n^(A->B) are close in L^2}
    Suppose that $|A\cup B|(d-\beta)<d$. Then
    \begin{equation*}
        \frac{b(n)^{d(|A\cup B|-1)}}{n^{|A\cup B|}}\left(h(n)^{|A\cup B|}\mathcal R_n^{A\rightarrow B}-\mathcal J_n^{A\rightarrow B}\right)\underset{n\rightarrow\infty}{\overset{L^2}\longrightarrow}0.
    \end{equation*}
\end{proposition}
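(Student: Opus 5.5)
We follow the route of Le Gall and Rosen for the range: decompose $\J_n^{A\rightarrow B}$ according to first hitting times, and compare it with $h(n)^{|A\cup B|}\R_n^{A\rightarrow B}$ through an intermediate quantity in which each walk contributes its first hitting time of $x$ followed by an independent number of returns to $x$. Write $m\eqdef|A\cup B|$. For each $x$ and each $\ell\in A\cup B$, the visits of $X^\ell$ to $x$ before time $n$ occur at times $T_x^\ell+j$ with $X^\ell_{T_x^\ell+j}=x$. The ordering constraint $k\in\Delta_n^{A\rightarrow B}$ is a constraint on these visit times. The first step is to replace it by the constraint on the first hitting times $T_x^a\le T_x^b$ for $(a,b)\in A\times B$. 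We therefore introduce
\begin{equation*}
\widetilde\J_n\eqdef\sum_{x\in\ZZ^d}\prod_{(a,b)\in A\times B}\1\left(T_x^a\le T_x^b\right)\prod_{\ell\in A\cup B}\1\left(T_x^\ell\le n\right)\sum_{j=0}^{n-T_x^\ell}\1\left(X^\ell_{T_x^\ell+j}=x\right).
\end{equation*}
We will show separately that $\J_n^{A\rightarrow B}-\widetilde\J_n$ and $\widetilde\J_n-h(n)^m\R_n^{A\rightarrow B}$ are $o\big(n^m/b(n)^{d(m-1)}\big)$ in $L^2$.

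For $\widetilde\J_n-h(n)^m\R_n^{A\rightarrow B}$ we use the strong Markov property at each $T_x^\ell$. By translation invariance, the number of returns of $X^\ell$ to $x$ after $T_x^\ell$ is an independent copy of the local time at $0$ up to time $n-T_x^\ell$. Its mean is $G_{n-T_x^\ell}(0,0)$, which equals $h(n)$ up to an error. We split each such factor as $h(n)+(\text{centred fluctuation})+(\text{deficit } h(n)-G_{n-T_x^\ell}(0,0))$ and expand the product over $\ell$.
\begin{itemize}[label={}]
\item \emph{Deficit terms.} The deficit is small relative to $h(n)$ unless $T_x^\ell>(1-\delta)n$, because $h$ is slowly varying when $d=\beta$ and bounded otherwise. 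Such sites are controlled by the intersection count of the walks restricted to a window of length $\delta n$, which is small by Proposition~\ref{proposition:bounds on moments of intersections} and scaling.
\item \emph{Centred fluctuation terms.} Here we compute the second moment, which is a sum over pairs $(x_1,x_2)$. The fluctuations attached to distinct sites are nearly uncorrelated, so only near-diagonal pairs contribute. This costs one factor of $\EE[\text{local time}^2]\lesssim h(n)^2$ in place of a product of two means. Using Proposition~\ref{proposition:bounds on moments of intersections} and Lemma~\ref{lemma:limit of h(n)b(n)^d/n}, the contribution is of order $n^m h(n)^{m}/b(n)^{d(m-1)}$ times $\EE[I_n]$-type factors. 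After normalisation this is smaller by a power of $n/(h(n)b(n)^d)\to0$.
\end{itemize}
This step is essentially Lemma~3.3 of Le Gall and Rosen with a product over $m$ walks in place of two.

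For $\J_n^{A\rightarrow B}-\widetilde\J_n$, note that the two quantities differ only for tuples of visit times where the order of the first hitting times disagrees with the order of the later visit times. Write $k_\ell=T_x^\ell+j_\ell$. A discrepancy requires, for some pair $(a,b)\in A\times B$ with $T_x^a\le T_x^b$, that $T_x^b+j_b<T_x^a+j_a$ or the reverse, so some return excursion of one walk straddles the first hitting time of another. We bound the absolute difference by a sum over pairs $(a,b)$ of quantities in which one walk makes a return to $x$ of length at least $|T_x^b-T_x^a|$. The return-time tail satisfies $\sum_{j\ge r}\PP(X_j=0)\lesssim\sum_{j\ge r}b(j)^{-d}$, which is small compared to $h(n)$ when $r$ is of order $n$ (with a logarithmic-type gain when $d=\beta$). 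Thus the contribution comes only from sites with $|T_x^a-T_x^b|\le\eta n$ for small $\eta$, and those are handled with Lemma~\ref{lemma:uniform convergence of hitting times} via the bound \eqref{ineq:bound for dominated convergence theorem}. The resulting second moment involves the continuous integrand $\prod p_{s_\ell}$ over the set where two times are within $\eta$. Its intersection-local-time mass tends to $0$ as $\eta\to0$, uniformly in $n$ by the domination in Lemma~\ref{lemma:uniform convergence of hitting times}.

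The main obstacle is this second-moment control of the ordering-discrepancy term, namely making the "times within $\eta n$" estimate rigorous. It requires a bound on $\EE[(\cdot)^2]$ via a sum over $x_1,x_2$ and all orderings of $2m$ hitting times. I would first try to reduce it, by the strong Markov property applied chronologically, to products of the kernels in \eqref{ineq:bound for dominated convergence theorem}. Integrability of the resulting singular products should follow from $m(d-\beta)<d$ by the Hölder argument of Proposition~\ref{proposition:bounds on moments of intersections}.
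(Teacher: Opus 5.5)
Your route is genuinely different from the paper's. The paper never separates the two effects you isolate: the ordering of first hitting times versus visit times, and local times versus $h(n)$. After a spatial cutoff $|x|\le Kb(n)$, it expands $\EE[(h(n)^m\mathcal R_{n,K}^{A\rightarrow B}-\mathcal J_{n,K}^{A\rightarrow B})^2]=I_n^1-2I_n^2+I_n^3$. Using the Markov property, the two uniform limits of Lemma~\ref{lemma:uniform convergence of hitting times} and dominated convergence, it shows that all three normalised second moments converge to the same integral $\sum_S\iint\int_{\widetilde\Delta_S^{A\rightarrow B}}\prod_\ell p\,p$. Both effects vanish at once because $\frac{h(n)b(n)^d}{n}\PP_x(k_1\le T_y\le k_2)$ and $\frac{b(n)^d}{n}\sum_{j=k_1}^{k_2}\PP_x(X_j=y)$ have the same limit kernel.

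Your decomposition through $\widetilde{\mathcal J}_n$ says more about the mechanism (local-time mass sits within $o(n)$ of the first hitting time), but it costs several separate estimates. Your deficit and discrepancy steps are sound in outline, but both end up needing the paper's tool anyway. That tool is pointwise limits plus domination on $\{s_\ell\ge 1-\delta\}$ or $\{|s_a-s_b|\le\eta\}$, with a spatial cutoff, since \eqref{ineq:bound for dominated convergence theorem} is not integrable at infinity. Note that Proposition~\ref{proposition:bounds on moments of intersections} does not cover a window started at $X^\ell_{(1-\delta)n}$.

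The step whose justification fails is the centred-fluctuation term. Write $F_x\eqdef L^\ell_x-G_{n-T^\ell_x}(0,0)$ for the centred local time of walk $\ell$ at $x$. Fluctuations at distinct sites are \emph{not} nearly uncorrelated. If $T^\ell_{x_1}<T^\ell_{x_2}$, conditioning on $\mathcal F_{T^\ell_{x_2}}$ turns $\EE[F_{x_1}F_{x_2}\mid\mathcal F_{T^\ell_{x_2}}]$ into the covariance of the local times at $x_1,x_2$ of a walk started at $x_2$ and run for $n-T^\ell_{x_2}$ steps. This covariance is of order $G_n(0,x_1-x_2)^2$, up to a lower-order finite-horizon correction. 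For a centred planar walk with finite variance, $G_n(0,y)\asymp\epsilon h(n)$ at $|y|=n^{(1-\epsilon)/2}$. So the correlation is of order $\epsilon^2$ at every mesoscopic separation, and these pairs outweigh the diagonal by a polynomial factor.

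In general, after normalisation the diagonal has size $h(n)^mb(n)^{d(m-1)}/n^m$. This is small because $m(d-\beta)<d$, not because of Lemma~\ref{lemma:limit of h(n)b(n)^d/n}. The off-diagonal pairs contribute of order $(n/(h(n)b(n)^d))^2$, which for $d=\beta$ equals $(h(n)s(n)^d)^{-2}$ and decays only slowly. So your near-diagonal accounting does not bound the term.

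The claim itself is true, and here is how to prove it. Bound the conditional covariance by $G_n(0,x_1-x_2)^2$ plus the correction. The second estimate in \eqref{ineq:bound for dominated convergence theorem} gives $G_n(0,b(n)y)\lesssim\frac{n}{b(n)^d}(|y|^{\beta-d-\epsilon}+|y|^{\beta-d+\epsilon})$. Summing this against the two-point hitting probabilities gives the main-term order, times $(n/(h(n)b(n)^d))^2$, times $\iint|x-y|^{(m+2)(\beta-d)}$ over the cutoff ball (up to $\pm\epsilon$ in the exponent). That integral is finite only when $(m+2)(d-\beta)<d$. When this fails, for instance $m=2$ and $d/\beta\in[4/3,3/2)$, the pairs with $|x_1-x_2|\le\eta b(n)$ must be summed at lattice scale. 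There, $m(d-\beta)<d$ makes them negligible, and only there does your ``near-diagonal'' picture apply. This two-regime covariance estimate is the missing piece of your first comparison; the paper's three-term second-moment identification avoids it altogether.
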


\begin{proof}
    We start by taking $K>0$ and considering the following versions of $\mathcal R_n^{A\rightarrow B}$ and $\mathcal J_n^{A\rightarrow B}$ with a cutoff,
    \begin{equation*}
        \mathcal R_{n,K}^{A\rightarrow B}\eqdef\sum_{\substack{x\in\ZZ^d \\ |x|\le b(n)K}}\prod_{(a,b)\in A\times B}\1\left(T_x^a\le T_x^b\le n\right),
    \end{equation*}
    \begin{align*}
        \mathcal J_{n,K}^{A\rightarrow B}&\eqdef\sum_{\substack{x\in\ZZ^d \\ |x|\le b(n) K}}\sum_{k\in\Delta_n^{A\rightarrow B}}\prod_{\ell\in A\cup B}\1\left(X_{k_\ell}^{\ell}=x\right).
    \end{align*}
    As in the proof of Proposition $6.3.$ of \cite{LeGallRosenRangeOfStableRandomWalks1991}, it is sufficient by (\ref{convergence:skorokhod's convergence theorem}) to show that the result holds for the versions with a cutoff. We have
    \begin{align*}
        &\EE\left[\left(h(n)^{|A\cup B|}\mathcal R_{n,K}^{A\rightarrow B}-\mathcal J_{n,K}^{A\rightarrow B}\right)^2\right]\\
        &\hspace{4em}=h(n)^{2|A\cup B|}\EE\left[\left(\mathcal R_{n,K}^{A\rightarrow B}\right)^2\right]-2h(n)^{|A\cup B|}\EE\left[\mathcal R_{n,K}^{A\rightarrow B}\mathcal J_{n,K}^{A\rightarrow B}\right]+\EE\left[\left(\mathcal J_{n,K}^{A\rightarrow B}\right)^2\right]\\
        &\hspace{4em}=:I_n^1-2I_n^2+I_n^3.
    \end{align*}
    We let $\mathcal B_{K}^{(n)}\eqdef\left\{x\in\RR^d, |\lfloor b(n) x\rfloor|\le K b(n)\right\}$ and $\mathcal B_K$ denote the Euclidean ball of radius $K$. Then
    \begin{align}
        &\frac{b(n)^{2d(|A\cup B|-1)}}{n^{2|A\cup B|}}I_n^1\nonumber\\
        &\hspace{1em}=\frac{h(n)^{2|A\cup B|}b(n)^{2d(|A\cup B|-1)}}{n^{2|A\cup B|}}\sum_{\substack{x,y\in\ZZ^d \\ |x|,|y|\le b(n) K}}\PP\left(\forall(a,b)\in A\times B,T_x^{a}\le T_x^{b}\le n, T_y^{a}\le T_y^{b}\le n\right)\nonumber\\
        &\hspace{1em}=\int_{\mathcal B_K^{(n)}\times\mathcal B_K^{(n)}}\d x\d y\frac{h(n)^{2|A\cup B|}b(n)^{2d|A\cup B|}}{n^{2|A\cup B|}}\sum_{k\in\widetilde\Delta_n^{A\rightarrow B}}\prod_{\ell\in A\cup B}\PP\left(T_{\lfloor b(n)x\rfloor}^{\ell}=k^1_{\ell},T_{\lfloor b(n)y\rfloor}^{\ell}=k_\ell^2\right),\label{eq:modified expression of I_n^1}
    \end{align}
    where we let 
    \begin{align*}
        \widetilde \Delta_n^{A\rightarrow B}\eqdef&\left\{k=(k_\ell^1,k_\ell^2)_{\ell\in A\cup B}\in[\![n]\!]^{2|A\cup B|}\mid k^i_{a}\le k^i_{{b}}, (a,b)\in A\times B, i=1,2\right\}.
    \end{align*}

    We start by studying the pointwise limit of the integrand in (\ref{eq:modified expression of I_n^1}). Note that $\widetilde\Delta_n^{A\rightarrow B}$ can be partitioned into $2^{|A\cup B|}$ disjoint subsets depending on whether $k_\ell^1\le k_\ell^2$ or $k_\ell^2<k_\ell^1$ for each $\ell\in A\cup B$. For any $S\subseteq A\cup B$, we let $\widetilde \Delta_{n,S}^{A\rightarrow B}$ denote the elements $k\in\widetilde\Delta_n^{A\rightarrow B}$ such that $k_\ell^1\le k_\ell^2\iff \ell\in S$. For the sake of simplicity, we shall work on $\widetilde\Delta_{n,A\cup B}^{A\rightarrow B}$ but the other cases are treated identically. In what follows, we let $x_n\eqdef\lfloor b(n) x\rfloor$ for any $x\in\RR^d$, and so for any $k\in\widetilde\Delta_{n,A\cup B}^{A\rightarrow B}$, $\ell\in A\cup B$, the Markov property yields
    \begin{align*}
        \PP\left(T_{x_n}^\ell=k_\ell^1,T_{y_n}^\ell=k_\ell^2\right)&=\PP\left(T_{x_n}^\ell=k_\ell^1\right)\PP_{x_n}\left(T_{y_n}^\ell=k_\ell^2-k_\ell^1\right)\\
        &\hspace{5em}-\PP\left(T_{y_n}^\ell<T_{x_n}^\ell=k^1_\ell \right)\PP_{x_n}(T_{y_n}^\ell=k_\ell^2-k_\ell^1)
    \end{align*}
    and so
    \begin{align*}
        &\sum_{k\in\widetilde\Delta_{n,A\cup B}^{A\rightarrow B}}\prod_{\ell\in A\cup B}\PP\left(T_{\lfloor b(n)x\rfloor}^{\ell}=k^1_{\ell},T_{\lfloor b(n)y\rfloor}^{\ell}=k_\ell^2\right)\\
        &\hspace{5em}=\sum_{L\subseteq A\cup B}(-1)^{|L|}\sum_{k\in\widetilde\Delta_{n,A\cup B}^{A\rightarrow B}}\left(\prod_{\ell\in L}\PP\left(T_{x_n}^\ell=k_1^\ell\right)\PP_{x_n}\left(T_{y_n}^\ell=k_\ell^2-k_\ell^1\right)\right)\\
        &\hspace{10em}\times\left(\prod_{\ell\notin L}\PP\left(T_{y_n}^\ell<T_{x_n}^\ell=k^1_\ell \right)\PP_{x_n}(T_{y_n}^\ell=k_\ell^2-k_\ell^1)\right)\\
        &=:\sum_{L\subseteq A\cup B}\Sigma_{n,L}(x,y)
    \end{align*}
    According to \eqref{ineq:bound for dominated convergence theorem}, we have for any $L\subseteq A\cup B$, the bound
    \begin{equation*}
        \Sigma_{n,L}(x,y)\lesssim\left(\prod_{\ell\in L}\frac{n^2}{h(n)^2b(n)^{2d}}\right)\left(\prod_{\ell\notin L}\frac{n^3}{h(n)^3b(n)^{3d}}\right)=\left(\frac{n}{h(n)b(n)^d}\right)^{3|A\cup B|-|S|},
    \end{equation*}
    where the constant depends on $x,y$. In particular, as soon as $L\ne A\cup B$,
    \begin{equation*}
        \frac{h(n)^{2|A\cup B|}b(n)^{2d|A\cup B|}}{n^{2|A\cup B|}}\Sigma_{n,L}(x,y)\lesssim \frac{n}{h(n)b(n)^d}=o(1),
    \end{equation*}
    thanks to Lemma \ref{lemma:limit of h(n)b(n)^d/n}. It therefore remains to study the case $L=A\cup B$. For this, we define the continuous analogues
    \begin{equation*}
        \widetilde\Delta^{A\rightarrow B}\eqdef\left\{s=(s_\ell^1,s_\ell^2)_{\ell\in A\cup B}\in[0,1]^{2|A\cup B|}\mid s_a^i\le s_b^i, (a,b)\in A\times B,i=1,2\right\},
    \end{equation*}
    and
    \begin{equation*}
        \widetilde\Delta_{L}^{A\rightarrow B}\eqdef\left\{s\in\widetilde\Delta^{A\rightarrow B}\mid s_\ell^1\le s_\ell^2\text{ iff }\ell\in L\right\}.
    \end{equation*}
    Then by \eqref{limit:uniform convergence discrete -> continuous I}, we get
    \begin{align*}
        &\frac{h(n)^{2|A\cup B|}b(n)^{2d|A\cup B|}}{n^{2|A\cup B|}}\Sigma_{n,A\cup B}(x,y)\\
        &\hspace{5em}=\sum_{k\in\widetilde\Delta_{n,A\cup B}^{A\rightarrow B}}\prod_{\ell\in A\cup B}\left(\frac{h(n)b(n)^d}{n}\PP\left(T_{x_n}^\ell=k_\ell^1\right)\right)\left(\frac{h(n)b(n)^d}{n}\PP_{x_n}\left(T_{y_n}^\ell=k_\ell^2-k_\ell^1\right)\right)\\
        &\hspace{4em}\underset{n\rightarrow\infty}{\longrightarrow}\int_{\widetilde \Delta_{A\cup B}^{A\rightarrow B}}\d s\ \prod_{\ell\in A\cup B}p_{s_\ell^1}(x)p_{s_\ell^2-s_\ell^1}(y-x).
    \end{align*}
    Reasoning identically over each $\widetilde \Delta_{n,S}^{A\rightarrow B}$, we get that the pointwise limit of the integrand in \eqref{eq:modified expression of I_n^1} is 
    \begin{equation*}
        \sum_{S\subseteq A\cup B}\int_{\widetilde\Delta_S^{A\rightarrow B}}\d s\left(\prod_{\ell\in S}p_{s_\ell^1}(x) p_{s_\ell^2-s_\ell^1}(y-x)\right)\left(\prod_{\ell\notin S}p_{s_\ell^2}(y) p_{s_\ell^1-s_\ell^2}(x-y)\right).
    \end{equation*}
    It remains to justify that we may apply the dominated convergence theorem. For any $x,y\in\mathcal B_K$, $x\ne y$, we have
    \begin{align*}
        &\frac{h(n)^{2|A\cup B|}b(n)^{2d|A\cup B|}}{n^{2|A\cup B|}}\PP\left(\forall(a,b)\in A\times B, T_{\lfloor b(n)x\rfloor}^a\le T_{\lfloor b(n)x\rfloor}^b\le n,T_{\lfloor b(n)y\rfloor}^a\le T_{\lfloor b(n)y\rfloor}^b\le n\right)\\
        &\hspace{3em}\lesssim \frac{h(n)^{2|A\cup B|}b(n)^{2d|A\cup B|}}{n^{2|A\cup B|}}\PP\left(\forall\ell \in A\cup B, T_{\lfloor b(n) x\rfloor}^{\ell},T_{\lfloor b(n) y\rfloor}^{\ell}\le n\right)\\
        &\hspace{3em}=\prod_{\ell\in A\cup B}\left(\frac{h(n)^2b(n)^{2d}}{n^2}\PP\left(T_{\lfloor b(n) x\rfloor}^{{\ell}},T_{\lfloor b(n) y\rfloor}^{\ell}\le n\right)\right)\\
        &\hspace{3em}\overset{\eqref{ineq:bound for dominated convergence theorem}}{\lesssim}\left(\left(|x|^{\beta-d-\varepsilon}+|x|^{\beta-d+\varepsilon}\right)\left(|y-x|^{\beta-d-\varepsilon}+|y-x|^{\beta-d+\varepsilon}\right)\right.\\
        &\hspace{6em}\left.+\left(|y|^{\beta-d-\varepsilon}+|y|^{\beta-d+\varepsilon}\right)\left(|x-y|^{\beta-d-\varepsilon}+|x-y|^{\beta-d+\varepsilon}\right)\right)^{|A\cup B|}\\
        &\hspace{3em}\lesssim\left(\left(|x|^{\beta-d-\varepsilon}+|x|^{\beta-d+\varepsilon}\right)^{|A\cup B|}+\left(|y|^{\beta-d-\varepsilon}+|y|^{\beta-d+\varepsilon}\right)^{|A\cup B|}\right)\\
        &\hspace{6em}\times\left(|x-y|^{\beta-d-\varepsilon}+|x-y|^{\beta-d+\varepsilon}\right)^{|A\cup B|}
    \end{align*}
    Note that the uniform bound we have obtained is integrable on $\mathcal B_K\times\mathcal B_K$ as soon as $|A\cup B|(\beta-d+\varepsilon)<d$, and by hypothesis we may choose $\varepsilon$ sufficiently small so that this condition is satisfied. By the dominated convergence theorem, we have thus shown that
    \begin{align*}
        &\lim_{n\rightarrow\infty}\frac{b(n)^{2d(|A\cup B|-1)}}{n^{2|A\cup B|}}I_n^1\\
        &\hspace{4em}=\sum_{S\subseteq A\cup B}\int_{\mathcal B_K\times\mathcal B_K}\d x\d y\int_{\widetilde\Delta_S^{A\rightarrow B}}\d s\left(\prod_{\ell\in S}p_{s_\ell^1}(x) p_{s_\ell^2-s_\ell^1}(y-x)\right)\left(\prod_{\ell\notin S}p_{s_\ell^2}(y) p_{s_\ell^1-s_\ell^2}(x-y)\right)
    \end{align*}
    We now turn to the study of $I_n^3$, which is essentially identical to that of $I_n^1$. The same calculation as before yields
    \begin{align*}
        \frac{b(n)^{d(|A\cup B|-1)}}{n^{2|A\cup B|}}I_n^3&=\int_{\mathcal B_K^{(n)}\times\mathcal B_K^{(n)}}\d x\d y\sum_{k\in\widetilde\Delta_n^{A\rightarrow B}}\prod_{\ell\in A\cup B}\left(\frac{b(n)^{2d}}{n^2}\PP\left(X_{k_\ell^1}^{\ell}=\lfloor b(n)x\rfloor,X_{k_\ell^2}^{\ell}=\lfloor b(n)y\rfloor\right)\right),
    \end{align*}
    the justification of the dominated convergence theorem is identical to the one provided for $I_n^1$ and we show that the integrand converges to the same one as in the case of $I_n^1$ using (\ref{limit:uniform convergence discrete -> continuous II}) instead of (\ref{limit:uniform convergence discrete -> continuous I}). All in all, we get
    \begin{align*}
        &\lim_{n\rightarrow\infty}\frac{b(n)^{2d(|A\cup B|-1)}}{n^{2|A\cup B|}}I_n^3\\
        &\hspace{4em}=\sum_{S\subseteq A\cup B}\int_{\mathcal B_K\times\mathcal B_K}\d x\d y\int_{\widetilde\Delta_S^{A\rightarrow B}}\d s\left(\prod_{\ell\in S}p_{s_\ell^1}(x) p_{s_\ell^2-s_\ell^1}(y-x)\right)\left(\prod_{\ell\notin S}p_{s_\ell^2}(y) p_{s_\ell^1-s_\ell^2}(x-y)\right)
    \end{align*}

    The method for studying $I_n^2$ is a combination of the methods used for studying $I_n^1$ and $I_n^3$. We invoke (\ref{limit:uniform convergence discrete -> continuous I}) and (\ref{limit:uniform convergence discrete -> continuous II}) to justify the convergence for any fixed $x,y\in\RR^d$ and to show that some remainder term analogous to the one obtained in the study of $I_n^1$ converges to $0$. We then use (\ref{ineq:bound for dominated convergence theorem}) to justify the dominated convergence theorem, and in doing so we show that 
    \begin{align*}
        &\lim_{n\rightarrow\infty}\frac{b(n)^{2d(|A\cup B|-1)}}{n^{2|A\cup B|}}I_n^2\\
        &\hspace{4em}=\sum_{S\subseteq A\cup B}\int_{\mathcal B_K\times\mathcal B_K}\d x\d y\int_{\widetilde\Delta_S^{A\rightarrow B}}\d s\left(\prod_{\ell\in S}p_{s_\ell^1}(x) p_{s_\ell^2-s_\ell^1}(y-x)\right)\left(\prod_{\ell\notin S}p_{s_\ell^2}(y) p_{s_\ell^1-s_\ell^2}(x-y)\right)
    \end{align*}
     This is enough to conclude, since the quantity we are interested in is equal to
     \begin{equation*}
         \frac{b(n)^{2d(|A\cup B|-1)}}{n^{2|A\cup B|}}\left(I_n^1-2I_n^2+I_n^3\right).
     \end{equation*}

\end{proof}

We now turn to the study of $\J_n^{A\rightarrow B}$. As mentioned in Section \ref{susbection:main results, strategy of proof}, we use the concept of links and shadows introduced by Dynkin \cite{DynkinSelf-IntersectionGauge1988}. Given any functional $F$ which is continuous in the product $J_1$ topology, we have
\begin{equation*}
    F\left(\left(X^{s,n}\right)_{s\in A\cup B}\right)\distribution F((U^s)_{s\in A\cup B}).
\end{equation*}
In practice though, we are led to study discontinuous functionals. We are not helped by the fact that the continuous analogue $\alpha^{A\cup B}(\Delta^{A\rightarrow B})$ is also a discontinuous functional of the trajectories. In order to circumvent this issue, we introduce for any $\varepsilon>0$ a link $\J_{n,\varepsilon}^{A\rightarrow B}$ and a shadow $\alpha_\varepsilon^{A\rightarrow B}(\Delta^{A\rightarrow B})$ which are continuous functionals of the trajectories of the scaled random walks and continuous limit processes respectively and such that $\J_{n,\varepsilon}^{A\rightarrow B}$ conveniently scaled converges in distribution to $\alpha_\varepsilon^{A\rightarrow B}(\Delta^{A\rightarrow B})$. Recalling that the space of probability measures over a Polish space endowed with the weak topology is itself Polish, then it becomes a consequence of the theory of Polish spaces that $\J_n^{A\rightarrow B}$ conveniently scaled converges in distribution to $\alpha^{A\rightarrow B}(\Delta^{A\rightarrow B})$ so long as we may also show that for any $n\ge1$,
\begin{equation}
\label{convergence:the link is close to the initial variable at n fixed}
    \J_{n,\varepsilon}^{A\rightarrow B}\underset{\varepsilon\rightarrow 0}\implies \J_n^{A\rightarrow B},
\end{equation}
and that 
\begin{equation}
    \label{convergence:the link is close to the initial variable uniformly strongly}
    \d_0\left( S(n)\J_{n,\varepsilon}^{A\rightarrow B},S(n)\J_n^{A\rightarrow B}\right)\underset{\substack{n\rightarrow\infty \\ \varepsilon\rightarrow 0}}\longrightarrow0,
\end{equation}
where $S$ is the natural scaling function for $\J_n^{A\rightarrow B}$, and $\d_0$ is a generic metric on the space of probability measures on $\RR$ such that $\d_0(\mu_n,\mu)\underset{n\rightarrow0}\longrightarrow0$ implies that $\mu_n$ converges weakly to $\mu$ (see \cite{DynkinSelf-IntersectionGauge1988}, Lemma $1.1$).

\begin{proposition}
\label{theorem:discrete ILT convergence in distribution to continuous ILT}
    Suppose that $|A\cup B|(d-\beta)<d$. Then
    \begin{equation*}
        \frac{b(n)^{d(|A\cup B|-1)}}{n^{|A\cup B|}}\mathcal J_n^{A\rightarrow B}\distribution \alpha^{A\cup B}\left(\Delta^{A\rightarrow B}\right).
    \end{equation*}
\end{proposition}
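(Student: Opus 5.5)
The plan follows Dynkin's method of links and shadows, in the form used by Rosen (Theorem 3 of \cite{RosenRandomWalks&IntersectionLocalTimes1990}). Fix a probability density $f\in\mathcal S(\RR^d)$ with $\int f=1$ and write $f_\varepsilon(x)\eqdef\varepsilon^{-d}f(x/\varepsilon)$. Write $m\eqdef|A\cup B|$. The shadow is
\begin{equation*}
\alpha_\varepsilon^{A\rightarrow B}(\Delta^{A\rightarrow B})\eqdef\int_{\Delta^{A\rightarrow B}}\d t\int_{\RR^d}\d y\prod_{\ell\in A\cup B}f_\varepsilon\left(U_{t_\ell}^\ell-y\right),
\end{equation*}
and the link $\J_{n,\varepsilon}^{A\rightarrow B}$ is the same functional of the scaled walks $X^{\ell,n}$. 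After multiplying by the scaling factor $S(n)=b(n)^{d(m-1)}/n^{m}$, the link is exactly
\begin{equation*}
\frac{1}{n^m}\sum_{k\in\Delta_n^{A\rightarrow B}}\int\d y\prod_\ell f_\varepsilon\left(X_{k_\ell}^\ell/b(n)-y\right),
\end{equation*}
a Riemann sum of the shadow functional. Since $f_\varepsilon$ is bounded and continuous, the integral over $\Delta^{A\rightarrow B}$ is a continuous functional in the product $J_1$ topology, at least at paths with finitely many jumps in $[0,1]$ and no jumps at $t=1$, which the limit has almost surely. By \eqref{convergence:skorokhod's convergence theorem} and independence, $S(n)\J_{n,\varepsilon}^{A\rightarrow B}\distribution\alpha_\varepsilon^{A\rightarrow B}(\Delta^{A\rightarrow B})$ for each fixed $\varepsilon$.

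It then suffices to show two things: $\alpha_\varepsilon\to\alpha^{A\cup B}(\Delta^{A\rightarrow B})$ in $L^2$ as $\varepsilon\to0$, which follows from the construction of intersection local time via mollifiers (\cite{ChenRosen2005}), and
\begin{equation*}
\lim_{\varepsilon\to0}\limsup_{n\to\infty}\EE\left[\left(S(n)\left(\J_{n,\varepsilon}^{A\rightarrow B}-\J_n^{A\rightarrow B}\right)\right)^2\right]=0.
\end{equation*}
This gives \eqref{convergence:the link is close to the initial variable uniformly strongly}, and \eqref{convergence:the link is close to the initial variable at n fixed} is trivial at fixed $n$. The conclusion then follows from Lemma 1.1 of \cite{DynkinSelf-IntersectionGauge1988}, or simply from a standard triangle argument with a bounded Lipschitz metric.

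For the $L^2$ estimate I will use Fourier analysis. Write $\1(X^{\ell}_{k}=x)$ via $(2\pi)^{-d}\int_{\TT^d}$. Summing over $x$ reduces $\J_n$ to
\begin{equation*}
\int_{\TT^{d(m-1)}}\prod_\ell e^{\i\langle p_\ell,X^\ell_{k_\ell}\rangle}\quad\text{with } \sum_\ell p_\ell=0.
\end{equation*}
After rescaling $p\mapsto p/b(n)$, $S(n)\J_{n,\varepsilon}$ has the same representation with an extra factor $\prod_\ell\hat f(\varepsilon p_\ell)$, up to the lattice/periodization mismatch, which I will treat as a negligible boundary term by restricting to $\TT_n^d$. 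The second moment of the difference then becomes an integral over two copies of momenta $(p,q)$ of
\begin{equation*}
\left|1-\prod\hat f(\varepsilon p_\ell)\right|\left|1-\prod\hat f(\varepsilon q_\ell)\right|
\end{equation*}
times $n^{-2m}\sum_{k,k'}\EE\left[\prod_\ell e^{\i\langle p_\ell,X_{k_\ell}\rangle+\i\langle q_\ell,X_{k'_\ell}\rangle}\right]$. For each $\ell$, I split according to the order of $k_\ell,k'_\ell$ and use independent increments. This bounds each factor by products of $\frac1n\sum_j|\varphi(\cdot/b(n))|^j$ evaluated at $q_\ell$ and $p_\ell+q_\ell$ (or at $p_\ell$ and $p_\ell+q_\ell$), which \eqref{ineq:riemann sum of scaled characteristic is bounded by a power of x} controls by $f(\overline{q_\ell})f(\overline{p_\ell+q_\ell})$. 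Lemma \ref{ineq:bound for products of Fourier transform} bounds each bracket by $C\varepsilon^\delta\sum_\ell|p_\ell|^\delta$. It then remains to show that
\begin{equation*}
\int\left(\sum_\ell|p_\ell|^\delta\right)\left(\sum_\ell|q_\ell|^\delta\right)\prod_\ell f(q_\ell)f(p_\ell+q_\ell)\,\d p\,\d q<\infty
\end{equation*}
for small $\delta$, uniformly in $n$. This would give a bound $O(\varepsilon^{2\delta})$, which is what we need.

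The main obstacle is this final finiteness claim. The integrability must come from $m(d-\beta)<d$, exactly as in the proof of Proposition \ref{proposition:bounds on moments of intersections}. I would handle it by Hölder's inequality across the $m$ linear constraints, as there, with exponent $\frac{m}{m-1}$. The condition needed is that $\frac{m}{m-1}(\beta-\varepsilon'-\delta)>d$, which holds for small $\varepsilon',\delta$ precisely because $m(d-\beta)<d$. The extra $|p|^\delta$ weights cost only $\delta$ in the exponent. A secondary nuisance is the ordering constraint $\Delta_n^{A\rightarrow B}$. Since all estimates are by absolute values, I simply drop it and sum over all of $[\![n]\!]^m$ and the doubled index set, at the price of the finitely many order cases for each $\ell$.
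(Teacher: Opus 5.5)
Your proposal is correct and follows essentially the paper's route: Dynkin's links and shadows; a Fourier representation of the $L^2$ distance between $\J_n^{A\rightarrow B}$ and its smoothed version; splitting by the order of $k_\ell,k'_\ell$ and applying Rosen's bound on $\frac1n\sum_k|\varphi(x/b(n))|^k$; extracting $\varepsilon^{2\delta}$ from the lemma on $|1-\prod\hat f(x_k)|$; and closing with H\"older with exponent $\frac{|A\cup B|}{|A\cup B|-1}$. Your differences from the paper are cosmetic (the symmetric mollifier $\int\d y\prod_\ell f_\varepsilon(U^\ell_{t_\ell}-y)$ instead of the chained $\prod_{\ell\ge2}f_\varepsilon(U^\ell_{t_\ell}-U^{\ell-1}_{t_{\ell-1}})$, and $\lim_{\varepsilon\to0}\limsup_{n\to\infty}$ instead of a bound uniform in $n$), except for one false but harmless claim: for $\beta<2$ the stable limit almost surely has infinitely many jumps in $[0,1]$, yet the shadow functional is continuous at every point of the product $J_1$ topology anyway, since $J_1$ convergence gives convergence at Lebesgue-a.e.\ time and the integrand is bounded and continuous.
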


\begin{proof}
    In the proof, to simplify notations, we suppose that $A$ and $B$ are respectively of the form $[\![m]\!]$ and $[\![m+1,\widetilde m]\!]$ for some $m<\widetilde m\le N$. Letting $\TT^d\eqdef[-\pi,\pi]^d$ and noting that for all $x,y\in\RR^d$, $(2\pi)^{-d}\int_{\TT^d}\d\xi\exp(-\i\langle\xi,x-y\rangle)$ equals $1$ if $x=y$ and $0$ otherwise, we get
    \begin{equation*}
        \mathcal J_n^{A\rightarrow B}=\sum_{k\in\Delta_n^{A\rightarrow B}}\prod_{\ell=2}^{|A\cup B|}\frac{1}{(2\pi)^d}\int_{\TT^d}\d\xi_\ell\exp\left(-\i\left\langle\xi_\ell,X_{k_\ell}^{\ell}-X_{k_{\ell-1}}^{{\ell-1}}\right\rangle\right).
    \end{equation*}
    We now introduce an appropriately smoothed version of $\J_n^{A\rightarrow B}$. First, let $f$ be a radially symmetric function in the Schwartz space $\mathcal S(\RR^d)$ such that $\int_{\RR^d} f=1$, and for all $\varepsilon>0$, define 
    \begin{equation*}
        f_\varepsilon(x)\eqdef\varepsilon^{-d}f(\varepsilon^{-1}x),
    \end{equation*}
    in such a way that $f_\varepsilon$ converges to $\delta_0$ in the sense of distributions. For any $g\in\mathcal S(\RR^d)$ we let $\hat g\in\mathcal S(\RR^d)$ denote the Fourier transform of $g$. It is easily seen that $\hat{f_\varepsilon}(x)=\hat f(\varepsilon x)$. For any $\varepsilon>0$, we define
    \begin{equation*}
        \mathcal J_{n,\varepsilon}^{A\rightarrow B}\eqdef\sum_{k\in\Delta_n^{A\rightarrow B}}\prod_{\ell=2}^{|A\cup B|}\frac{1}{(2\pi)^d}\int_{\TT^d}\d\xi_\ell\exp\left(-\i\left\langle\xi_\ell,X_{k_\ell}^{\ell}-X_{k_{\ell-1}}^{{\ell-1}}\right\rangle\right)\hat f_\varepsilon \left(b(n)\xi_\ell\right).
    \end{equation*}
    By defining $\TT_n^d\eqdef b(n)\TT^d$, we see that
    \begin{align*}
        \mathcal J_{n,\varepsilon}^{A\rightarrow B}&=\sum_{k\in\Delta_n^{A\rightarrow B}}\prod_{\ell=2}^{|A\cup B|}\frac{1}{(2\pi)^d}\int_{\TT_n^d}\frac{\d\xi_\ell}{b(n)^d}\exp\left(-\i\left\langle\xi_\ell,\frac{X_{k_\ell}^{\ell}-X_{k_{\ell-1}}^{{\ell-1}}}{b(n)}\right\rangle\right)\hat f_\varepsilon(\xi_\ell)\\
        &=\sum_{k\in\Delta_n^{A\rightarrow B}}\prod_{\ell=2}^{|A\cup B|}\frac{1}{b(n)^d}\left(f_\varepsilon\left(\frac{X_{k_\ell}^{\ell}-X_{k_{\ell-1}}^{{\ell-1}}}{b(n)}\right)\right.\\
        &\quad\quad \quad\quad\quad\quad\quad\quad\quad\quad\quad\left.-\frac{1}{(2\pi)^d}\int_{(\TT_n^d)^c}\d\xi_\ell\exp\left(-\i\left\langle\xi_\ell,\frac{X_{k_\ell}^{\ell}-X_{k_{\ell-1}}^{{\ell-1}}}{b(n)}\right\rangle\right)\hat f_\varepsilon(\xi_\ell)\right),
    \end{align*}
    where we used Fourier inversion in the second equality. Since $f$ is radially symmetric, so is $\hat f$. Indeed, if we take $x,y\in\RR^d$ such that $|x|=|y|$ and let $\rho$ be a rotation that sends $x$ to $y$, then
    \begin{align*}
        \hat f(y)&=\hat f(\rho(x))=\frac{1}{(2\pi)^d}\int_{\RR^d}\d\xi\exp(\i\langle\xi,\rho(x)\rangle)f(\xi)\\
        &=\frac{1}{(2\pi)^d}\int_{\RR^d}\d\xi\exp(\i\langle\rho^{-1}(\xi),x\rangle)f(\xi)\\
        &=\frac{1}{(2\pi)^d}\int_{\RR^d}\d\xi\exp(\i\langle\xi,x\rangle)f(\rho(\xi))\\
        &=\frac{1}{(2\pi)^d}\int_{\RR^d}\d\xi\exp(\i\langle\xi,x\rangle)f(\xi)=\hat f(x).
    \end{align*}
    Therefore, by letting $\widetilde f$ denote the radial part of $\hat f$, we have $\widetilde f\in\mathcal S(\RR)$ whence for any $m\ge d+1$,
    \begin{align*}
        \left|\int_{(\TT_n^d)^c}\d\xi\exp\left(-\i\langle \xi,x\right\rangle)\hat f_\varepsilon(\xi)\right|&\le {d}\int_{b(n)\pi}^\infty\d r\ r^{d-1}\widetilde f(\varepsilon r)\\
        &\le C_{d,m}\varepsilon^{-m}\int_{b(n)\pi}^\infty\d r\ r^{d-m-1}\\
        &\le C_{d,m}\varepsilon^{-m}b(n)^{d-m}.
    \end{align*}
    Since the last estimate is uniform in $x$, we deduce that there is some $\kappa>0$ such that for any $m\ge d+1$,
    \begin{align*}
        \mathcal J_{n,\varepsilon}^{A\rightarrow B}&=\sum_{k\in\Delta_n^{A\rightarrow B}}\prod_{\ell=2}^{|A\cup B|}\frac{1}{b(n)^d}f_\varepsilon\left(\frac{X_{k_\ell}^{\ell}-X_{k_{\ell-1}}^{{\ell-1}}}{b(n)}\right)+\mathcal O\left(|\Delta_n^{A\rightarrow B}|b(n)^{\kappa(d-m)}\right)\\
        &=\frac{n^{|A\cup B|}}{b(n)^{d(|A\cup B|-1)}}\int_{\Delta^{A\rightarrow B}}\d t\prod_{\ell=2}^{|A\cup B|}f_\varepsilon\left(X_{t_\ell}^{\ell,n}-X_{t_{\ell-1}}^{\ell-1,n}\right)+\mathcal O\left(|\Delta_n^{A\rightarrow B}|b(n)^{\kappa(d-m)}\right).
    \end{align*}
    Noting that for a function $g$ defined on $\ZZ^{|A\cup B|}$, we have 
    \begin{align*}
        \sum_{k\in\Delta_n^{A\rightarrow B}}g(k)&=\sum_{\sigma\in|S|^{A\rightarrow B}}\sum_{k\in\Delta_n^\sigma}g(k)=\sum_{\sigma\in|S|^{A\rightarrow B}}\int_{n^{|A\cup B|}\Delta^\sigma}\d t\ g(\lfloor t\rfloor)=n^{|A\cup B|}\sum_{\sigma\in|S|^{A\rightarrow B}}\int_{\Delta^\sigma}\d t\ g(\lfloor nt\rfloor)\\
        &=n^{|A\cup B|}\int_{\Delta^{A\rightarrow B}}\d t\ g(\lfloor nt\rfloor),
    \end{align*}
    then
    \begin{equation*}
        \frac{b(n)^{d(|A\cup B|-1)}}{n^{|A\cup B|}}\mathcal J_{n,\varepsilon}^{A\rightarrow B}=\int_{\Delta^{A\rightarrow B}}\d t\prod_{\ell=2}^{|A\cup B|}f_\varepsilon\left(X_{t_\ell}^{\ell,n}-X_{t_{\ell-1}}^{\ell-1,n}\right)+\mathcal O_\varepsilon\left(\frac{|\Delta_n^{A\rightarrow B}|}{n^{|A\cup B|}}\frac{b(n)^{d(|A\cup B|-1)}}{b(n)^{\kappa(m-d)}}\right).
    \end{equation*}
    Since $(\omega^1,\dots,\omega^{|A\cup B|})\mapsto\int_{\Delta^{A\rightarrow B}}\d t\prod_{\ell=2}^{|A\cup B|}f_\varepsilon\left(\omega_{t_\ell}^\ell-\omega_{t_{\ell-1}}^{\ell-1}\right)$ is continuous for the topology of uniform convergence on compact sets of continuous paths, then (\ref{convergence:skorokhod's convergence theorem}) ensures that 
    \begin{equation*}
        \int_{\Delta^{A\rightarrow B}}\d t\prod_{\ell=2}^{|A\cup B|}f_\varepsilon\left(X_{t_\ell}^{\ell,n}-X_{t_{\ell-1}}^{\ell-1,n}\right)\distribution\int_{\Delta^{A\rightarrow B}}\d t\prod_{\ell=2}^{|A\cup B|}f_\varepsilon\left(U_{t_\ell}^\ell-U_{t_{\ell-1}}^{\ell-1}\right)=\alpha_\varepsilon^{A\cup B}(\Delta^{A\rightarrow B}).
    \end{equation*}
    Furthermore, it is clear that for $m$ sufficiently large, 
    \begin{equation*}
        \underset{n\rightarrow \infty}\lim\frac{|\Delta_n^{A\rightarrow B}|b(n)^{d(|A\cup B|-1)}}{n^{|A\cup B|}b(n)^{\kappa(m-d)}}=0,
    \end{equation*}
    whence 
    \begin{equation*}
        \frac{b(n)^{d(|A\cup B|-1)}}{n^{|A\cup B|}}\mathcal J_{n,\varepsilon}^{A\rightarrow B}\distribution\alpha_\varepsilon^{A\cup B}(\Delta^{A\rightarrow B}).
    \end{equation*}
    Furthermore, as shown in \cite{ChenRosen2005}, so long as $|A\cup B|(d-\beta)<d$, then for all $p\ge1$,
    \begin{equation}
    \label{convergence:a.s. and Lp convergence of smoothed ILT}
        \alpha_\varepsilon^{A\cup B}(\Delta^{A\rightarrow B})\underset{\varepsilon\rightarrow0}{\overset{a.s.,L^p}{\longrightarrow}}\alpha^{A\cup B}(\Delta^{A\rightarrow B}),
    \end{equation}
    thus it remains to show that $\mathcal J_n^{A\rightarrow B}$ and $\mathcal J_{n,\varepsilon}^{A\rightarrow B}$ conveniently scaled are close as $\varepsilon\rightarrow 0$ uniformly in $n$. To this end, borrowing the notations of Proposition \ref{proposition: h(n)^(a+b)R_n^(A->B) and J_n^(A->B) are close in L^2}, we show that there are $C,\delta>0$ such that for any $n\ge1$, $\varepsilon>0$, 
    \begin{equation}
    \label{ineq:J_n(A->B) and J_(n,varepsilon)(A->B) are close}
        \frac{b(n)^{2d\left(|A\cup B|-1\right)}}{n^{2|A\cup B|}}\EE\left[\left(\J_n^{A\rightarrow B}-\J_{n,\varepsilon}^{A\rightarrow B}\right)^2\right]\lesssim\varepsilon^\delta.
    \end{equation}
    This clearly implies both (\ref{convergence:the link is close to the initial variable at n fixed}) and (\ref{convergence:the link is close to the initial variable uniformly strongly}), since convergence in the $L^2$ norm implies convergence in distribution. Therefore, we have
    \begin{align}
    \label{eq:L^2 norm of the difference J_n^(A->B)-J_(n,eps)^(A->B)}
        &\mathcal E_{n,\varepsilon}\eqdef\frac{b(n)^{2d\left(|A\cup B|-1\right)}}{n^{2|A\cup B|}}\EE\left[\left(\J_n^{A\rightarrow B}-\J_{n,\varepsilon}^{A\rightarrow B}\right)^2\right]\nonumber\\
        &\hspace{2em}\lesssim\frac{1}{n^{2|A\cup B|}}\sum_{k\in\widetilde\Delta_n^{A\rightarrow B}}\int\d\xi^1\d\xi^2\EE\left[\prod_{\ell=2}^{|A\cup B|}\exp\left(-\i\sum_{j=1}^2\left\langle\xi_\ell^j,b(n)^{-1}\left(X_{k_\ell^j}^\ell-X_{k_{\ell-1}^j}^{\ell-1}\right)\right\rangle\right)\right]\nonumber\\
        &\hspace{20em}\times\left(1-\prod_{\ell=2}^{|A\cup B|}\hat f_\varepsilon\left(\xi_\ell^1\right)\right)\left(1-\prod_{\ell=2}^{|A\cup B|}\hat f_\varepsilon\left(\xi_\ell^2\right)\right),
    \end{align}
    where the integral is taken over $(\xi_\ell^j)_{2\le\ell\le|A\cup B|}^{j=1,2}\in\TT_n^{2d(|A\cup B|-1)}\times\TT_n^{2d(|A\cup B|-1)}=:\mathcal T_n$. By letting $\xi_1^j=\xi_{|A\cup B|+1}^j=0$ for $j=1,2$, we notice that
    \begin{align*}
        \prod_{\ell=2}^{|A\cup B|}\exp\left(-\i\sum_{j=1}^2\left\langle\xi_\ell^j,b(n)^{-1}\left(X_{k_\ell^j}^\ell-X_{k_{\ell-1}^j}^{\ell-1}\right)\right\rangle\right)&=\exp\left(-\i\sum_{j=1}^2\sum_{\ell=2}^{|A\cup B|}\left\langle\xi_\ell^j,b(n)^{-1}\left(X_{k_\ell^j}^\ell-X_{k_{\ell-1}^j}^{\ell-1}\right)\right\rangle\right)\\
        &=\exp\left(-\i\sum_{j=1}^2\sum_{\ell=1}^{|A\cup B|}\left\langle\xi_\ell^j-\xi_{\ell+1}^j,b(n)^{-1}X_{k_\ell^j}^\ell\right\rangle\right),
    \end{align*}
    whence the expectation in (\ref{eq:L^2 norm of the difference J_n^(A->B)-J_(n,eps)^(A->B)}) is equal to 
    \begin{equation}
    \label{eq:expectation term in L^2 norm of error}
        \prod_{\ell=1}^{|A\cup B|}\EE\left[\exp\left(-\i\sum_{j=1}^2\left\langle\xi_\ell^j-\xi_{\ell+1}^j,b(n)^{-1}X_{k_\ell^j}^\ell\right\rangle\right)\right].
    \end{equation}
    Once again, we decompose $\sum_{\widetilde\Delta_n^{A\rightarrow B}}$ into $\sum_{S\subseteq A\cup B}\sum_{\widetilde\Delta_{n,S}^{A\rightarrow B}}$ and by symmetry focus only on the case $S=A\cup B$, $i.e.$ $k_\ell^1\le k_\ell^2$ for every $\ell\in A\cup B$, the others being equivalent. By letting $k\in\widetilde\Delta_{n,A\cup B}^{A\rightarrow B}$ and $\ell\in A\cup B$, we get
    \begin{align}
       & \EE\left[\exp\left(-\i\sum_{j=1}^2\left\langle\xi_\ell^j-\xi_{\ell+1}^j,b(n)^{-1}X_{k_\ell^j}^\ell\right\rangle\right)\right]\nonumber\\
        &\hspace{3em}=\EE\left[\exp\left(-\i\left\langle\xi_\ell^{2}-\xi_{\ell+1}^{2},b(n)^{-1}X_{k_\ell^{2}}^\ell\right\rangle-\i\left\langle\xi_\ell^{1}-\xi_{\ell+1}^{1},b(n)^{-1}X_{k_\ell^{1}}^\ell\right\rangle\right)\right]\nonumber\\
        &\hspace{3em}=\EE\left[\exp\left(-\i\left\langle\xi_\ell^{2}-\xi_{\ell+1}^{2},b(n)^{-1}\left(X_{k_\ell^{2}}^\ell-X_{k_\ell^{1}}^\ell\right)\right\rangle\right)\right]\nonumber\\
        &\hspace{6em}\times\EE\left[\exp\left(-\i\left\langle\sum_{j=1}^2(\xi_\ell^j-\xi_{\ell+1}^j),b(n)^{-1}X_{k_\ell^{1}}^\ell\right\rangle\right)\right]\nonumber\\
        &\hspace{3em}=\varphi\left(\frac{\xi_\ell^{2}-\xi_{\ell+1}^{2}}{b(n)}\right)^{k_\ell^{2}-k_\ell^{1}}\varphi\left(\sum_{j=1}^2\frac{\xi_\ell^j-\xi_{\ell+1}^j}{b(n)}\right)^{k_\ell^{1}}\label{eq:the expectation term in error is a product of characteristic functions},
    \end{align}
    where $\varphi$ is the characteristic function of the $X^\ell$. By \eqref{ineq:riemann sum of scaled characteristic is bounded by a power of x}, for any $(\xi^1,\xi^2)\in\mathcal T_n$, recalling that $\overline x\equiv x\pmod{\pi b(n)}$ we have 
    \begin{align}
        &\frac{1}{n^{2|A\cup B|}}\sum_{k\in\widetilde\Delta_{n,\rho}^{A\rightarrow B}}\prod_{\ell=1}^{|A\cup B|}\left|\varphi\left(\frac{\xi_\ell^{2}-\xi_{\ell+1}^{2}}{b(n)}\right)^{k_\ell^{2}-k_\ell^{1}}\varphi\left(\sum_{j=1}^2\frac{\xi_\ell^j-\xi_{\ell+1}^j}{b(n)}\right)^{k_\ell^{1}}\right|\nonumber\\
        &\hspace{3em}\le \prod_{\ell=1}^{|A\cup B|}\left(\frac{1}{n}\sum_{k=0}^n\left|\varphi\left(\frac{\xi_\ell^{2}-\xi_{\ell+1}^{2}}{b(n)}\right)\right|^k\right)\left(\frac{1}{n}\sum_{k=0}^n\left|\varphi\left(\sum_{j=1}^2\frac{\xi_\ell^j-\xi_{\ell+1}^j}{b(n)}\right)\right|^k\right)\nonumber\\
        &\hspace{3em}\lesssim\prod_{\ell=1}^{|A\cup B|}\left(1+\left|\overline{\xi_\ell^{2}}-\overline{\xi_{\ell+1}^{2}}\right|^{\beta(1-\varepsilon)}\right)^{-1}\left(1+\left|\overline{\xi_\ell^{1}}-\overline{\xi_{\ell+1}^{1}}+\overline{\xi_\ell^{2}}-\overline{\xi_{\ell+1}^{2}}\right|^{\beta(1-\varepsilon)}\right)^{-1}.\label{ineq:bound for the expectation in error calculation}
    \end{align}
    In the rest of the proof, for all $\ell\in A\cup B$, we shall write
    \begin{equation*}
        \zeta_\ell^{1}\eqdef\xi_\ell^{2}-\xi_{\ell+1}^{2},\quad\zeta_\ell^{2}\eqdef\xi_\ell^{1}-\xi_{\ell+1}^{1}+\xi_\ell^{2}-\xi_{\ell+1}^{2}.
    \end{equation*}
    Note that if $(\xi^1,\xi^2)\in\mathcal T_n$, then $(\zeta^{1},\zeta^{2})\in4\mathcal T_n$. By letting 
    \begin{equation*}
        F(\varepsilon,\xi^1,\xi^2)\eqdef\left(1-\prod_{\ell=2}^{|A\cup B|}\hat f_\varepsilon\left(\xi_\ell^1\right)\right)\left(1-\prod_{\ell=2}^{|A\cup B|}\hat f_\varepsilon\left(\xi_\ell^2\right)\right),
    \end{equation*}
    then by using successively the identity
    \begin{equation*}
        \prod_{k=1}^qa_k=\prod_{j=1}^q\left(\prod_{k\ne j}a_k^{\frac{1}{q-1}}\right)
    \end{equation*}
    and Hölder's inequality, we get for all $\varepsilon>0$,
    \begin{equation*}
        \mathcal E_{n,\varepsilon}\lesssim\prod_{\ell=1}^{|A\cup B|}\left(\int_{\mathcal T_n}\d\xi^1\d\xi^2 F(\varepsilon,\xi^1,\xi^2)\prod_{m\ne \ell}\frac{1}{\left(1+|\overline{\zeta_m^1}|^{\beta(1-\varepsilon)}\right)^{\frac{|A\cup B|}{|A\cup B|-1}}\left(1+|\overline{\zeta_m^2}|^{\beta(1-\varepsilon)}\right)^\frac{|A\cup B|}{|A\cup B|-1}}\right)^{\frac{1}{|A\cup B|}}
    \end{equation*}
    It remains to obtain a bound on $F(\varepsilon,\xi^1,\xi^2)$. Recalling that $\hat f_\varepsilon(\cdot)\equiv\hat f(\varepsilon\cdot)$, then by Lemma \ref{ineq:bound for products of Fourier transform},
    \begin{equation*}
        \left|1-\prod_{\ell=2}^{|A\cup B|}\hat f_{\varepsilon}\left(\xi_\ell\right)\right|\lesssim \varepsilon^\delta\sum_{\ell=2}^{|A\cup B|}|\xi_\ell|^\delta,
    \end{equation*}
    whence for all $(\xi^1,\xi^2)\in\mathcal T_n$ and $\ell\in[\![|A\cup B|]\!]$,
    \begin{align}
    \label{ineq:bound for the term in epsilon}
        F(\varepsilon,\xi^1,\xi^2)&\lesssim\varepsilon^{2\delta}\sum_{\ell_1,\ell_2=2}^{|A\cup B|}|\xi_{\ell_1}^1|^\delta|\xi_{\ell_2}^2|^\delta\nonumber\\
        &\lesssim\varepsilon^{2\delta}\sum_{\ell_1,\ell_2\ne \ell}\left(|\zeta_{\ell_1}^1|^{2\delta}+|\zeta_{\ell_2}^2|^{2\delta}\right),
    \end{align}
    since $\xi_j^1,\xi_j^2\in\mathrm{span}\left\{\zeta_k^1,\zeta_k^2, k\ne \ell\right\}$ for any $\ell\in A\cup B$. Injecting (\ref{ineq:bound for the expectation in error calculation}) and (\ref{ineq:bound for the term in epsilon}) into (\ref{eq:L^2 norm of the difference J_n^(A->B)-J_(n,eps)^(A->B)}) yields
    \begin{equation}
    \label{ineq:final bound for E_(n,varepsilon)}
        \mathcal E_{n,\varepsilon}\lesssim \varepsilon^{2\delta}\prod_{\ell=1}^{|A\cup B|}\left(\int_{4\mathcal T_n}\d\zeta \sum_{\ell_1,\ell_2\ne \ell}\prod_{m\ne \ell}\frac{\left(|\overline{\zeta_{\ell_1}^1}|^{2\delta}+|\overline{\zeta_{\ell_2}^2}|^{2\delta}\right)^{\frac{1}{|A\cup B|-1}}}{\left(1+|\overline{\zeta_m^1}|^{\beta(1-\varepsilon)}\right)^{\frac{|A\cup B|}{|A\cup B|-1}}\left(1+|\overline{\zeta_m^2}|^{\beta(1-\varepsilon)}\right)^\frac{|A\cup B|}{|A\cup B|-1}}\right)^{\frac{1}{|A\cup B|}}.
    \end{equation}
    
    In particular, looking at the integral in (\ref{ineq:final bound for E_(n,varepsilon)}) component by component, we may deduce that it is uniformly bounded in $n\ge1$ once we show that 
    \begin{equation}
        \label{ineq:uniform in n bound on E_(n,varepsilon)}
        \sup_{n\ge1}\int_{4\TT_n^{d}}\d\zeta\left(\frac{|\overline\zeta|^{2\delta}}{\left(1+|\overline\zeta|^{\beta(1-\varepsilon)}\right)^{|A\cup B|}}\right)^{\frac{1}{|A\cup B|-1}}<\infty.
    \end{equation}
    This is indeed the case so long as we take $\delta,\varepsilon>0$ such that 
    \begin{equation*}
        \frac{\beta(1-\varepsilon)|A\cup B|-2\delta}{|A\cup B|-1}>d\iff |A\cup B|(d-\beta)<d-\varepsilon\beta|A\cup B|-2\delta,
    \end{equation*}
    which is attainable by letting $\varepsilon,\delta$ become sufficiently small since we supposed $|A\cup B|(d-\beta)<d$. For such a choice of $\varepsilon,\delta$, we thus have
    \begin{equation*}
        \mathcal E_{n,\varepsilon}\lesssim\varepsilon^{2\delta},
    \end{equation*}
    which is sufficient to conclude.
\end{proof}
\begin{remark}
        The method of proof used here actually yields the seemingly stronger result
    \begin{equation*}
        \left(\left(X^{s,n}\right)_{s\in A\cup B},\frac{b(n)^{d(|A\cup B|-1)}}{n^{|A\cup B|}}\mathcal J_n^{A\rightarrow B}\right)\distribution\left(\left(U^s\right)_{s\in A\cup B},\alpha^{A\cup B}\left(\Delta^{A\rightarrow B}\right)\right),
    \end{equation*}
    where the convergence of processes occurs in the $J_1$ topology. Indeed, since $\J_{n,\varepsilon}^{A\rightarrow B}$ is, up to some arbitrarily small error term, a continuous functional of the $\left(X^{s,n}\right)_{s\in A\cup B}$, then 
    \begin{equation*}
        \left(\left(X^{s,n}\right)_{s\in A\cup B},\frac{b(n)^{d(|A\cup B|-1)}}{n^{|A\cup B|}}\mathcal J_{n,\varepsilon}^{A\rightarrow B}\right)\distribution\left(\left(U^s\right)_{s\in A\cup B},\alpha_\varepsilon^{A\cup B}\left(\Delta^{A\rightarrow B}\right)\right).
    \end{equation*}
    This combined with (\ref{convergence:a.s. and Lp convergence of smoothed ILT}) and (\ref{ineq:J_n(A->B) and J_(n,varepsilon)(A->B) are close}) yields the result.
    \end{remark}

A similar result holds for the quantity 
\begin{equation*}
    \mathcal I_{n,q,m}^{j\rightarrow k}=\sum_{x\in\ZZ^d}\1\left(T_x^j\circ\theta_{\frac{m-1}{2^{q-1}}}<n/2^q<T_x^k\circ\theta_{\frac{m-1}{2^{q-1}}n}\le n/2^{q-1}\right).
\end{equation*} 

\begin{theorem}
\label{theorem:joint convergence of the I_(n,q,m)^(j->k)}
    If $d/\beta<2$, then for all $p\ge1$,
    \begin{equation*}
        \frac{h(n)^2b(n)^d}{n^2}\mathcal I_{n,q,m}^{j\rightarrow k}\distribution\alpha^{j,k}\left(\left[\frac{2m-2}{2^q},\frac{2m-1}{2^q}\right]\times\left[\frac{2m-1}{2^q},\frac{2m}{2^q}\right]\right)
    \end{equation*}
    jointly in $j,k\in[\![N]\!]$, $j\ne k$ and $q\in[\![p]\!]$, $m\in[\![2^{q-1}]\!]$. Furthermore, we also have the convergence of the first moments.
\end{theorem}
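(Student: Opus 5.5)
We would follow the same two-step scheme as for Theorem \ref{theorem:convergence of R_n^(A->B)}, with the intersection of two independent walks living on disjoint time intervals. First, by the Markov property at time $\frac{m-1}{2^{q-1}}n$ and stationarity of increments, $\I_{n,q,m}^{j\rightarrow k}$ has the same law as $\I_{n,q,1}^{j\rightarrow k}$ built from the shifted walks. Further, setting $t_0\eqdef\frac{m-1}{2^{q-1}}n$ and $t_1\eqdef t_0+n/2^q$, the indicator defining $\I_{n,q,m}^{j\rightarrow k}$ says that $x$ is visited by $X^j$ during $[t_0,t_1)$ and first visited by $X^k$ during $(t_1,t_0+n/2^{q-1}]$ (first visit counted from $t_0$). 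We would compare it with the discrete intersection local time
\begin{equation*}
    \J_{n,q,m}^{j\rightarrow k}\eqdef\sum_{x\in\ZZ^d}\sum_{t_0\le k_1<t_1<k_2\le t_0+n/2^{q-1}}\1\left(X_{k_1}^j=x\right)\1\left(X_{k_2}^k=x\right).
\end{equation*}
The claim is that $\frac{b(n)^d}{n^2}\left(h(n)^2\I_{n,q,m}^{j\rightarrow k}-\J_{n,q,m}^{j\rightarrow k}\right)\to0$ in $L^2$. The proof would copy that of Proposition \ref{proposition: h(n)^(a+b)R_n^(A->B) and J_n^(A->B) are close in L^2} with $|A\cup B|=2$. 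We introduce the cutoff $|x|\le Kb(n)$ and expand the second moments as integrals over $(x,y)$. For fixed $x\ne y$, the limits of the integrands would come from \eqref{limit:uniform convergence discrete -> continuous I} and \eqref{limit:uniform convergence discrete -> continuous II}, now summed over time windows $[k_1,k_2]$ given by the dyadic intervals. The uniformity in $0\le k_1\le k_2\le n$ is exactly what handles these windows. Domination follows from \eqref{ineq:bound for dominated convergence theorem}, which is integrable on $\mathcal B_K^2$ because $2(d-\beta)<d$, i.e.\ $d/\beta<2$.

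Next we would show that $\frac{b(n)^d}{n^2}\J_{n,q,m}^{j\rightarrow k}$ converges jointly. For this we rerun the proof of Proposition \ref{theorem:discrete ILT convergence in distribution to continuous ILT} with the simplex $\Delta^{j\rightarrow k}$ replaced by the rectangle $R_{q,m}\eqdef\left[\frac{2m-2}{2^q},\frac{2m-1}{2^q}\right]\times\left[\frac{2m-1}{2^q},\frac{2m}{2^q}\right]$. The smoothed link
\begin{equation*}
    \int_{R_{q,m}}\d s\,\d t\ f_\varepsilon\left(X_t^{k,n}-X_s^{j,n}\right)
\end{equation*}
is a continuous functional of the pair of rescaled paths. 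Hence, by \eqref{convergence:skorokhod's convergence theorem} applied jointly to all $N$ independent walks, the finite family indexed by $(j,k,q,m)$ of these links converges jointly to $\left(\alpha_\varepsilon^{j,k}(R_{q,m})\right)$. The $L^2$ estimate \eqref{ineq:J_n(A->B) and J_(n,varepsilon)(A->B) are close} carries over verbatim: restricting the time sum to a rectangle only decreases the Fourier bound \eqref{ineq:bound for the expectation in error calculation}. We then combine it with \eqref{convergence:a.s. and Lp convergence of smoothed ILT}. Since $L^2$-closeness uniform in $n$ holds for each coordinate, Dynkin's shadow argument gives joint convergence of the vector, which transfers to $\I$ by the first step and Slutsky.

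For convergence of first moments we would prove uniform integrability by bounding second moments. Indeed $\I_{n,q,m}^{j\rightarrow k}\le I_n^{\{j,k\}}$ after a shift, and Proposition \ref{proposition:bounds on moments of intersections} with $|S|=2$ and $k=2$ gives $\sup_n\EE\left[\left(\frac{h(n)^2b(n)^d}{n^2}\I_{n,q,m}^{j\rightarrow k}\right)^2\right]<\infty$. Convergence in law then upgrades to convergence of expectations. Alternatively, the expectations can be computed directly from \eqref{limit:uniform convergence discrete -> continuous I} as $\int\d x\int_{R_{q,m}}p_s(x)p_{t}(x)$-type integrals.

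The main obstacle is the first step. The event involves the strict ordering $T^j<n/2^q<T^k$ together with a first-visit condition for $X^k$ measured from $t_0$, while $X^j$ may already have visited $x$ before $t_0$. Writing the two-point probabilities via the Markov property produces correction terms, as for $\Sigma_{n,L}$ with $L\neq A\cup B$. We would try to show that these are $O\left(n/(h(n)b(n)^d)\right)$ using Lemma \ref{lemma:limit of h(n)b(n)^d/n}, exactly as in the earlier proof.
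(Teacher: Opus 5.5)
Your proposal follows the paper's proof. It makes an $L^2$ comparison, with cutoff, of $h(n)^2\I_{n,q,m}^{j\rightarrow k}$ with a discrete intersection local time on the dyadic rectangle, shows convergence of the latter via smoothed links, Skorokhod's theorem and the Fourier $L^2$ bound, and gets the first moments from $L^2$-boundedness. You differ in two small points, both slightly more direct than the paper's subsequence-identification step and its appeal to earlier computations: you obtain joint convergence by continuous mapping on the whole vector of links, and you get $L^2$-boundedness from the pointwise bound $\I_{n,q,m}^{j\rightarrow k}\le I_n^{\{j,k\}}$ (which needs no shift) together with Proposition~\ref{proposition:bounds on moments of intersections}.

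One correction: your opening claim that $\I_{n,q,m}^{j\rightarrow k}$ has the law of $\I_{n,q,1}^{j\rightarrow k}$ is false for $m\ge2$. The two independent walks sit at different points $X^j_{t_0}\ne X^k_{t_0}$ at the shift time, and correspondingly the limiting $\alpha^{j,k}$-masses of the two rectangles have different laws; the paper's ``sufficient to treat $q=m=1$'' can only be read in the same loose sense. Nothing downstream uses the claim, provided that in the $L^2$ comparison you condition on $\mathcal F_{t_0}$ and integrate Lemma~\ref{lemma:uniform convergence of hitting times} over the rescaled random offset $X^k_{t_0}-X^j_{t_0}$ by dominated convergence.
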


\begin{proof}
    The proof of the convergence in distribution is very similar to the proof of Theorem \ref{theorem:convergence of R_n^(A->B)} in the case where $A=\{j\}$, $B=\{k\}$. For this reason, we will state the intermediate steps but omit further details. It is sufficient to treat the case $q=m=1$, and so to lighten notations we shall let $\I_n^{j\rightarrow k}$ denote $\I_{n,1,1}^{j\rightarrow k}$. Introducing the discrete intersection local time
    \begin{equation*}
        \J_{n}^{j\rightarrow k}\eqdef\sum_{\ell=0}^{n/2}\sum_{m=n/2}^n\1\left(X_\ell^j=X_m^k\right),
    \end{equation*}
    then the same method as used in the proof of Theorem \ref{theorem:discrete ILT convergence in distribution to continuous ILT} yields
    \begin{equation*}
        \left(X^{j,n},X^{k,n},\frac{b(n)^d}{n^2}\J_n^{j\rightarrow k}\right)\distribution\left(U^j,U^k,\alpha^{j,k}\left(\left[0,1/2\right]\times\left[1/2,1\right]\right)\right),
    \end{equation*}
    where the convergence of processes occurs in the usual $J_1$ topology. Once this is done, we may show that
    \begin{equation*}
        \underset{n\rightarrow 0}\lim\frac{b(n)^{2d}}{n^4}\EE\left[\left(h(n)^2\I_n^{j\rightarrow k}-\J_n^{j\rightarrow k}\right)^2\right]=0,
    \end{equation*}
    which implies that 
    \begin{equation*}
        \left(X^{j,n},X^{k,n},\frac{h(n)^2b(n)^d}{n^2}\I_n^{j\rightarrow k}\right)\distribution\left(U^j,U^k,\alpha^{j,k}\left(\left[0,1/2\right]\times\left[1/2,1\right]\right)\right).
    \end{equation*}
    Consequently, by extracting appropriate subsequences and identifying the limit, one deduces that we have the following joint convergence
    \begin{equation*}
        \left(X^{j,n},X^{k,n},\frac{h(n)^2b(n)^d}{n^2}\I_n^{j\rightarrow k}\right)_{1\le j\ne k\le N}\distribution\left(U^j,U^k,\alpha^{j,k}\left(\left[0,1/2\right]\times\left[1/2,1\right]\right)\right)_{1\le j\ne k\le N},
    \end{equation*}
    and by the exact same reasoning we can make the convergence joint in $q,m$ as well. Finally, the fact that 
    \begin{equation*}
        \underset{n\rightarrow\infty}\lim\frac{h(n)^2b(n)^{d}}{n^2}\EE\left[\I_n^{j\rightarrow k}\right]=\EE\left[\alpha^{j,k}\left(\left[0,1/2\right]\times\left[1/2,1\right]\right)\right]
    \end{equation*}
    follows from the same calculations as performed in the proof of Proposition \ref{proposition: h(n)^(a+b)R_n^(A->B) and J_n^(A->B) are close in L^2}, which yield uniform integrability of the sequence $\left(\frac{h(n)^2b(n)^d}{n^2}\I_n^{j\rightarrow k}\right)_{n\ge1}$, since it is bounded in $L^2$ for example.
\end{proof}

\section{Proof of Theorems \ref{theorem:LLN} and \ref{theorem:CLT}}
\label{section:proof of main results}

We are now ready to give a proof of the announced LLN and CLT under \ref{Assumption A1}, \ref{Assumption A2}, \ref{Assumption A3}. 
\begin{proof}[Proof of Theorem \ref{theorem:LLN}]
    By (\ref{eq:decomposition formula for R_n^k}), we have
\begin{equation*}
    \frac{h(n)}n\R_n^k=\frac{h(n)}n \left|X^k(0,n)\right|+\widetilde\varepsilon_n^k,
\end{equation*}
where 
\begin{equation*}
    \widetilde\varepsilon_n^k\eqdef\frac{h(n)}n\sum_{\substack{S\subseteq[\![N]\!]\backslash\{k\} \\ |S|\ge 1}}(-1)^{|S|}\R_n^{S\rightarrow k}.
\end{equation*}
By Theorem $6.9$ of \cite{LeGallRosenRangeOfStableRandomWalks1991}, the first term converges in $L^2$ (and $a.s.$ if $s(n)\ge1$ for any $n\ge1$) to $1$ and by (\ref{convergence:limit of R_n^(A->B)}), we have for all $S\subseteq[\![N]\!]\backslash\{k\}$ with $|S|\ge1$,
\begin{equation*}
    \frac{h(n)}{n}\R_n^{S\rightarrow k}=\left(\frac{n}{h(n)b(n)^d}\right)^{|S|}\left(\frac{h(n)^{|S|+1}b(n)^{d|S|}}{n^{|S|+1}}\R_n^{S\rightarrow k}\right)\eqdef a_nY_n.
\end{equation*}
According to Theorem \ref{theorem:convergence of R_n^(A->B)} and Lemma \ref{lemma:limit of h(n)b(n)^d/n}, we have respectively that $Y_n$ has converging second moments and $a_n$ converges to $0$ as $n\rightarrow\infty$. In particular, $a_nY_n$ converges to $0$ in $L^2$, whence $\widetilde\varepsilon_n^k$ too. It remains to obtain $a.s.$ convergence to $0$. To this end, we let $\varepsilon>0$, $p\ge1$ and write
\begin{align*}
    \sum_{n\ge1}\PP\left(|a_n Y_n|\ge\varepsilon\right)\le \varepsilon^{-p}\sum_{n\ge1}a_n^p\EE[Y_n^p]\lesssim\sum_{n\ge1}a_n^p,
\end{align*}
where in the last inequality we used Proposition \ref{proposition:bounds on moments of intersections} and the fact that $\R_n^{S\rightarrow k}\le I_n^{S\cup\{k\}}$ $a.s.$ When $d>\beta$, by writing 
\begin{equation*}
    a_n=\frac{1}{h(n)^{|S|}s(n)^{d|S|}}\frac{1}{n^{|S|(d/\beta-1)}},
\end{equation*}
then by the Potter bounds it is clear that we may choose $p$ large enough so that $\sum_{n\ge1}a_n^p<\infty$, and so by the Borel-Cantelli lemma, $a_nY_n\overset{a.s.}\longrightarrow0$ as $n\rightarrow\infty$. It remains to treat the case $d=\beta$. In the proof of Theorem $6.9$ in \cite{LeGallRosenRangeOfStableRandomWalks1991}, it is shown that for any $r>1$, 
\begin{equation*}
    \sum_{k\ge1}\frac{1}{h(\lfloor r^k\rfloor)^2s(\lfloor r^k\rfloor)^{2d}}<\infty,
\end{equation*}
and so by the same reasoning as before, this shows that $a_{\lfloor r^k\rfloor}Y_{\lfloor r^k\rfloor}$ converges $a.s.$ to $0$ as $k\rightarrow\infty$. Fixing $r>1$ and for every $n\ge1$, letting $k(n)$ denote the unique integer such that $\lfloor r^{k(n)}\rfloor\le n<\lfloor r^{k(n)+1}\rfloor$, then we have on one hand 
\begin{equation*}
    \R_{\lfloor r^{k(n)}\rfloor}^{S\rightarrow k}\le\R_n^{S\rightarrow k}\le\R_{\lfloor r^{k(n)+1}\rfloor},
\end{equation*}
and on the other for any $\varepsilon>0$,
\begin{equation*}
    \frac{h(n)\lfloor r^{k(n)}\rfloor}{nh(\lfloor r^{k(n)}\rfloor)}\le \frac{h(\lfloor r^{k(n)+1}\rfloor)}{h(\lfloor r^{k(n)}\rfloor)}\lesssim r^{\varepsilon},
\end{equation*}
and
\begin{equation*}
    \frac{h(n)\lfloor r^{k(n)}\rfloor}{nh(\lfloor r^{k(n)}\rfloor)}\ge\frac{\lfloor r^{k(n)}\rfloor}{\lfloor r^{k(n)+1}\rfloor}\ge r^{-1}.
\end{equation*}
In particular, 
\begin{equation*}
    \lim_{n\rightarrow\infty}\frac{h(n)}{n}\R_{\lfloor r^{k(n)}\rfloor}^{S\rightarrow k}=\lim_{n\rightarrow\infty}\frac{h(n)}{n}\R_{\lfloor r^{k(n)+1}\rfloor}^{S\rightarrow k}=0,
\end{equation*}
whence $\frac{h(n)}{n}\R_n^{S\rightarrow k}$ converges $a.s.$ to $0$ as $n\rightarrow0$, which is sufficient to conclude.
\end{proof}

We now turn to the proof of Theorem \ref{theorem:CLT}. We start with a variance estimate for the competitive range.

\begin{proposition}
    Under assumptions \ref{Assumption A1},\ref{Assumption A2}, we have
    \begin{equation*}
        \Var\left(\R_n^{\ell}\right)\lesssim\frac{n^4}{h(n)^4b(n)^{2d}}
    \end{equation*}
\end{proposition}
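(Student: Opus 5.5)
We start from the decomposition \eqref{eq:decomposition formula for R_n^k}:
\begin{equation*}
\R_n^\ell=\left|X^\ell(0,n)\right|-\sum_{j\ne \ell}\R_n^{j\rightarrow \ell}+\varepsilon_n^\ell.
\end{equation*}
Since the number of summands is bounded in terms of $N$, the inequality $\Var(\sum_i Y_i)\lesssim\sum_i\Var(Y_i)$ reduces the claim to bounding each of the three types of terms separately by $n^4/(h(n)^4b(n)^{2d})$.

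\textbf{The range and the error term.} For the ordinary range, this is the variance estimate of Le Gall and Rosen in the regime $1\le d/\beta<3/2$ (\cite{LeGallRosenRangeOfStableRandomWalks1991}, the variance lemma used in their CLT): $\Var(|X(0,n)|)\lesssim n^4/(h(n)^4b(n)^{2d})$. We take it as given. For the error term we use $\Var(\varepsilon_n^\ell)\le\EE[(\varepsilon_n^\ell)^2]$. By Proposition \ref{proposition:decomposition formula for mathcal R_n^k}, this is $o(n^4/(h(n)^4b(n)^{2d}))$, since $\varepsilon_n^\ell$ is a finite sum of $\R_n^{S\rightarrow\ell}\le I_n^{\{a,b,\ell\}}$. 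The triple-intersection moment bound of Proposition \ref{proposition:bounds on moments of intersections} gives $(n^3/(h^3b^{2d}))^2$, which is smaller by the factor $(n/(hb^d))^2\to0$ (Lemma \ref{lemma:limit of h(n)b(n)^d/n}).

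\textbf{The terms $\R_n^{j\rightarrow\ell}$.} Here it suffices to bound the second moment. We have $\R_n^{j\rightarrow \ell}\le I_n^{\{j,\ell\}}$, and Proposition \ref{proposition:bounds on moments of intersections} with $|S|=2$ and $k=2$ (valid since $2(d-\beta)<d$, as $d/\beta<2$) gives
\begin{equation*}
\EE\left[(\R_n^{j\rightarrow\ell})^2\right]\le\EE\left[(I_n^{\{j,\ell\}})^2\right]\lesssim\left(\frac{n^2}{h(n)^2b(n)^d}\right)^2=\frac{n^4}{h(n)^4b(n)^{2d}},
\end{equation*}
which is exactly the required order. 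Combining the three estimates concludes the proof.

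\textbf{Main obstacle.} The only nontrivial input is the range variance bound, which we import rather than prove. If one wanted a self-contained argument (only (A1), (A2) are assumed in the statement), the plan is to iterate Le Gall's dyadic decomposition \eqref{eq:decomposition for the classic range}. Writing $v(n)=\Var|X(0,n)|$, the independent half-ranges give $v(n)\le 2v(n/2)+C\,\Var(\text{intersection})+$ cross terms. The intersection variance is $\lesssim (n^2/(h^2b^d))^2$ by Proposition \ref{proposition:bounds on moments of intersections}, and the cross terms are handled by Cauchy--Schwarz. Summing the geometric recursion over scales, the Potter bounds (using $d/\beta<3/2$) make $\sum_q 2^{q}\,(2^{-q}n)^4/(h^4b^{2d})(2^{-q}n)$ dominated by the top scale. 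The delicate point is controlling the cross terms without losing a logarithm. One would use the $L^2$ form $\|X\|_2\le\sum_q\|\cdot\|_2$ over levels, where each level contributes $2^{q/2}$ times the per-block size, and this yields a convergent geometric series precisely because $4-2d/\beta>1$.
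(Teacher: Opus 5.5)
Your proposal is correct and is essentially the paper's proof. You split $\R_n^\ell$ via the inclusion--exclusion decomposition, import the range variance bound from Le Gall--Rosen (their Lemma 6.7, which is exactly what the paper cites), bound $\R_n^{j\rightarrow\ell}\le I_n^{\{j,\ell\}}$ with the moment bound of Proposition \ref{proposition:bounds on moments of intersections}, and control the error term through the triple-intersection bound. The only cosmetic difference is that the paper combines the pieces by Minkowski's inequality on standard deviations rather than $\Var(\sum_i Y_i)\lesssim\sum_i\Var(Y_i)$.

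Your closing sketch of a self-contained range variance bound is not needed. If you keep it, note that the $2^{q/2}$ factor per level relies on the independence of the centered intersection terms within a given dyadic level.
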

\begin{proof}
    Indeed, by Proposition \ref{proposition:decomposition formula for mathcal R_n^k},
    \begin{align*}
        \Var\left(\R_n^k\right)^{1/2}&\le \Var\left(\left|X^k(0,n)\right|\right)^{1/2}+\sum_{j\ne k}\Var\left(\R_n^{j\rightarrow k}\right)^{1/2}+\Var\left(\varepsilon_n^k\right)^{1/2}\\
        &\le C\frac{n^2}{h(n)^2b(n)^d}+\sum_{j\ne k}\EE\left[\left(I_n^{\{j,k\}}\right)^2\right]^{1/2}+\EE\left[\left(\varepsilon_n^k\right)^2\right]^{1/2}\\
        &\lesssim\frac{n^2}{h(n)^2b(n)^d},
    \end{align*}
    where we used (\cite{LeGallRosenRangeOfStableRandomWalks1991}, Lemma 6.7) in the second line and (\ref{limit:E[(epsilon_n^k)^2] is small}) in the third.
\end{proof}

\begin{proof}[Proof of Theorem \ref{theorem:CLT}] For the sake of simplifying expressions, in the proof we use the notations
\begin{equation*}
    R_n^k\eqdef\left|X^k(0,n)\right|,
\end{equation*}
\begin{equation*}
    R_{n,p,\ell}^k\eqdef\left|X^k\left(\frac{\ell-1}{2^p}n,\frac{\ell}{2^p}n\right)\right|,
\end{equation*}
\begin{equation*}
    I_{n,q,m}^k\eqdef\left|X^k\left(\frac{2m-2}{2^{q}}n,\frac{2m-1}{2^q}n\right)\cap X^k\left(\frac{2m-1}{2^q}n,\frac{2m}{2^q}n\right)\right|.
\end{equation*}
With these notation, we can rewrite formula \eqref{eq:decomposition for the classic range} as 
\begin{equation*}
    R_n^k=\sum_{\ell=1}^{2^p}R_{n,p,\ell}^k-\sum_{q=1}^p\sum_{m=1}^{2^{q-1}}I_{n,q,m}^k.
\end{equation*}
Combining (\ref{eq:decomposition for the classic range}), (\ref{eq:decomposition formula for R_n^k}) and (\ref{eq:decomposition of R_n^(j->k)}) yields for all $n\ge1$, $p\ge1$, 
    \begin{equation*}
        \R_n^k=\sum_{\ell=1}^{2^p}\left(R_{n,p,\ell}^k-\sum_{j\ne k}\R_{n,p,\ell}^{j\rightarrow k}\right)-\sum_{q=1}^p\sum_{m=1}^{2^{q-1}}\left(I_{n,q,m}^k+\sum_{j\ne k}\I_{n,q,m}^{j\rightarrow k}\right)+\varepsilon_{n,p}^k+\sum_{j\ne k}\varepsilon_{n,p}^{j\rightarrow k}.
    \end{equation*}
    As it was shown in \cite{LeGallRosenRangeOfStableRandomWalks1991}, we have
    \begin{equation*}
        \left(X^{k,(n)},\frac{h(n)^2b(n)^d}{n^2}\overline I_{n,q,m}^k\right)_{k,q,m}\distribution\left(U^k,\overline\alpha^k\left(\left[\frac{2m-2}{2^q},\frac{2m-1}{2^q}\right]\times\left[\frac{2m-1}{2^q},\frac{2m}{2^q}\right]\right)\right)_{k,q,m},
    \end{equation*}
    where $k\in[\![N]\!]$, $q\in[\![p]\!]$ and $m\in[\![2^{q-1}]\!]$ and the convergence of processes holds in the $J_1$ topology for paths indexed by $[0,1]$. For the same range of parameters and the same topology, we have seen in Theorem \ref{theorem:joint convergence of the I_(n,q,m)^(j->k)} that
    \begin{align*}
        &\left(X^{j,(n)},X^{k,(n)},\frac{h(n)^2b(n)^d}{n^2}\overline \I_{n,q,m}^{j\rightarrow k}\right)_{j,k,q,m}\\
        &\hspace{10em}\distribution\left(U^j,U^k,\overline\alpha^{j,k}\left(\left[\frac{2m-2}{2^q},\frac{2m-1}{2^q}\right]\times\left[\frac{2m-1}{2^q},\frac{2m}{2^q}\right]\right)\right)_{j,k,q,m},
    \end{align*}
    with $j\in[\![N]\!]\backslash\{k\}$. Combining these two results and once again resorting to a tightness argument, we obtain
    \begin{align}
        &\frac{h(n)^2b(n)^d}{n^2}\left(\overline I_{n,q_1,m_1}^{\ell},\overline \I_{n,q_2,m_2}^{j\rightarrow k}\right)_{\substack{\ell,j,k \\ q_1,m_1,q_2,m_2}}\nonumber\\
        &\hspace{5em}\distribution\left(\overline\alpha^\ell\left(\left[\frac{2m_1-2}{2^{q_1}},\frac{2m_1-1}{2^{q_1}}\right]\times\left[\frac{2m_1-1}{2^{q_1}},\frac{2m_1}{2^{q_1}}\right]\right),\right.\label{conv:joint convergence in distribution for proof of CLT}\\
        &\hspace{10em}\left.\overline\alpha^{j,k}\left(\left[\frac{2m_2-2}{2^{q_2}},\frac{2m_2-1}{2^{q_2}}\right]\times\left[\frac{2m_2-1}{2^{q_2}},\frac{2m_2}{2^{q_2}}\right]\right)\right)_{\substack{\ell,j,k \\ q_1,m_1,q_2,m_2}},\nonumber
    \end{align}
    where $\ell,j,k\in[\![N]\!]$, $j\ne k$ and $q_1,q_2\in[\![p]\!]$, $m_i\in[\![2^{q_1-1}]\!]$, $i=1,2$. Consequently, for any $p\ge1$,
    \begin{equation*}
        \frac{h(n)^2b(n)^d}{n^2}\sum_{q=1}^p\sum_{m=1}^{2^{q-1}}\left(\overline I_{n,q,m}^k+\sum_{j\ne k}\overline\I_{n,q,m}^{j\rightarrow k}\right)
    \end{equation*}
    converges in distribution as $n\rightarrow\infty$ to
    \begin{equation}
        \label{limit:convergence in distribution of main part for CLT}
        \sum_{q=1}^p\sum_{m=1}^{2^{q-1}}\left(\left(\overline\alpha^k+\sum_{j\ne k}\overline\alpha^{j,k}\right)\left(\left[\frac{2m-2}{2^q},\frac{2m-1}{2^q}\right]\times\left[\frac{2m-1}{2^q},\frac{2m}{2^q}\right]\right)\right),
    \end{equation}
    which in turn converges in $L^2$ as $p\rightarrow\infty$ to 
    \begin{equation}
        \label{limit of the main term for CLT as p->infty}
        \gamma^k\left([0,1]_\le^2\right)+\sum_{j\ne k}\overline\alpha^{j,k}\left([0,1]_\le^2\right).
    \end{equation}
    Furthermore, by noticing that for all $n,p$ and  $\ell,\widetilde\ell\in[\![2^p]\!]$, the quantities $R_{n,p,\ell}^k-\sum_{j\ne k}\R_{n,p,\ell}^{j\rightarrow k}$ and $R_{n,p,\widetilde\ell}^k-\sum_{j\ne k}\R_{n,p,\widetilde\ell}^{j\rightarrow k}$ are independent as soon as $\ell\ne\widetilde \ell$, whence
    \begin{align*}
        \Var\left(\sum_{\ell=1}^{2^p}\left(R_{n,p,\ell}^k-\sum_{j\ne k}\R_{n,p,\ell}^{j\rightarrow k}\right)\right)&= \sum_{\ell=1}^{2^p}\Var\left(R_{n,p,\ell}^k-\sum_{j\ne k}\R_{n,p,\ell}^{j\rightarrow k}\right)\\
        &\lesssim\sum_{\ell=1}^{2^p}\left(\Var\left(R_{n,p,\ell}^k\right)^{1/2}+\sum_{j\ne k}\Var\left(\R_{n,p,\ell}^{j\rightarrow k}\right)^{1/2}\right)^2\\
        &=\sum_{\ell=1}^{2^p}\left(\Var\left(R_{n/2^p}^k\right)^{1/2}+\sum_{j\ne k}\Var\left(\R_{n/2^p}^{j\rightarrow k}\right)^{1/2}\right)^2\\
        &\lesssim2^p\frac{(n/2^p)^{4}}{h(n/2^p)^{4}b(n/2^p)^{2d}}\\
        &\lesssim\frac{n^4}{h(n)^4b(n)^{2d}}2^{-p(3-2d/\beta-\varepsilon)},
    \end{align*}
    where again we used the Potter bounds. By hypothesis, we may find $\varepsilon>0$ sufficiently small such that $3-2d/\beta-\varepsilon>0$, and thus
    \begin{equation}
        \label{limit:the first term in proof of CLT is small}
        \underset{p\rightarrow\infty}\lim\sup_{n\ge1}\EE\left[\left(\sum_{\ell=1}^{2^p}\frac{h(n)^2b(n)^d}{n^2}\left(\overline R_{n,p,\ell}^k-\sum_{j\ne k}\overline\R_{n,p,\ell}^{j\rightarrow k}\right)\right)^2\right]=0.
    \end{equation}
    Finally, combining Propositions \ref{proposition:decomposition formula for mathcal R_n^k} and \ref{proposition:decomposition of R_n^(j->k)}, we see that 
    \begin{equation}
        \label{limit:the error term in proof of CLT is small}
        \underset{n\rightarrow\infty}\lim\frac{h(n)^4b(n)^{2d}}{n^4}\Var\left(\varepsilon_{n,p}^{k}+\sum_{j\ne k}\varepsilon_{n,p}^{j\rightarrow k}\right)=0,
    \end{equation}
    and so (\ref{limit:convergence in distribution of main part for CLT}), (\ref{limit of the main term for CLT as p->infty}), (\ref{limit:the first term in proof of CLT is small}) and (\ref{limit:the error term in proof of CLT is small}) together yield the result, by letting first $n\rightarrow\infty$ and then $p\rightarrow\infty$.

    As a matter of fact, using the joint convergence result (\ref{conv:joint convergence in distribution for proof of CLT}), we have actually shown the following stronger result 
    \begin{equation*}
        \frac{h(n)^2b(n)^d}{n^2}\left(\mathcal R_n^k-\EE\left[\R_n^k\right]\right)_{k\in[\![N]\!]}\distribution\left(-\left(\gamma^k+\sum_{\ell\ne k}\overline\alpha^{\ell,k}\right)\left([0,1]_\le^2\right)\right)_{k\in[\![N]\!]},
    \end{equation*}
    which is the second statement of Theorem \ref{theorem:CLT}.
\end{proof}

\section{Discussion about the remaining cases}

Throughout the paper, we focused exclusively on the case $d/\beta\in[1,3/2)$, because it was the most interesting from the point of view of behavior of the fluctuations and also for our motivations. It is natural however to wonder what happens when $d/\beta\notin[1,3/2)$.

For instance, when $d/\beta\ge3/2$, the CLT established in \cite{LeGallRosenRangeOfStableRandomWalks1991} states that one may find a constant $\sigma>0$ depending on $d/\beta$ and the distribution of $X$ such that 
\begin{equation}
\label{limit:fluctuations range d/beta>=3/2}
    \frac{1}{\sqrt{ng(n)}}(|X(0,n)|-\EE[|X(0,n)|])\distribution\mathcal N(0,\sigma^2),
\end{equation}
where $g$ is the function defined as $g(n)\eqdef\sum_{k=1}^nk^2b(k)^{-2d}=\sum_{k=1}^nk^{2-2d/\beta}s(k)^{-2d}$. In particular, note that when $d/\beta>3/2$, $g(n)$ converges to some positive constant as $n\rightarrow\infty$ in which case we recover usual diffusive fluctuations, and when $d/\beta=3/2$ it can still be the case that $g(n)$ converges to a constant, but it is also possible that $g(n)$ diverges slowly in the sense of regular variation to $+\infty$. The competitive range satisfies the same CLT.
\begin{proposition}
    If $X^1,\dots,X^N$ are in the domain of attraction of a $\beta$-stable process $U$ with scale function $b(n)$ and that $d/\beta\ge3/2$, then there is an explicit constant $\sigma>0$ such that
    \begin{equation*}
    \frac{1}{\sqrt{ng(n)}}\left(\R_n^k-\EE\left[\R_n^k\right]\right)\distribution\mathcal N(0,\sigma^2).
    \end{equation*}
Furthermore, we have the joint convergence
\begin{equation*}
    \frac{1}{\sqrt{ng(n)}}\left(\R_n^k-\EE\left[\R_n^k\right]\right)_{k\in[\![N]\!]}\distribution\mathcal N(0,\sigma^2)\otimes\dots\otimes\mathcal N(0,\sigma^2).
    \end{equation*}
\end{proposition}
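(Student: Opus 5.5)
The plan is to reduce everything to the CLT \eqref{limit:fluctuations range d/beta>=3/2} of \cite{LeGallRosenRangeOfStableRandomWalks1991} through the decomposition of Proposition \ref{proposition:decomposition formula for mathcal R_n^k}. We write
\begin{equation*}
\R_n^k=\left|X^k(0,n)\right|-\sum_{\emptyset\ne S\subseteq[\![N]\!]\backslash\{k\}}(-1)^{|S|+1}\R_n^{S\rightarrow k},
\end{equation*}
and show that each centered correction $\R_n^{S\rightarrow k}-\EE[\R_n^{S\rightarrow k}]$ is $o(\sqrt{ng(n)})$ in $L^2$. Slutsky's lemma then gives the one-dimensional statement with the same $\sigma$ as for the range.

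For the corrections it suffices to treat $|S|=1$, since $\R_n^{S\rightarrow k}\le \R_n^{j\rightarrow k}$ for any $j\in S$. We bound $\EE[(\R_n^{j\rightarrow k})^2]\le\EE[(I_n^{\{j,k\}})^2]$. When $2(d-\beta)<d$, i.e.\ $d/\beta<2$, Proposition \ref{proposition:bounds on moments of intersections} gives the order $n^4/(h(n)^4b(n)^{2d})$. Since $h$ is bounded here (the walk is transient as $d/\beta>1$), this is regularly varying of index $4-2d/\beta\le1$. It is strictly smaller than $ng(n)$ when $d/\beta>3/2$. When $d/\beta=3/2$, one compares $n^{1}s(n)^{-2d}$ with $n\sum_{k\le n}k^{-1}s(k)^{-2d}$, and the latter dominates by the argument of Lemma \ref{lemma:limit of h(n)b(n)^d/n}. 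When $d/\beta\ge2$, I would instead use the direct estimate
\begin{equation*}
\EE\left[\left(I_n^{\{j,k\}}\right)^2\right]\lesssim\Bigl(\sum_x G_n(0,x)^2\Bigr)^2+\sum_x G_n(0,x)^2,
\end{equation*}
obtained from a second-moment computation via the Markov property. Here $\sum_xG_n(0,x)^2=\sum_{i,j\le n}\PP(X_{i+j}=0)\lesssim\sum_{m\le 2n} m\,b(m)^{-d}$, which is $o(n)$ as soon as $d/\beta>2$ and is slowly varying times $n^{0}$ or smaller in the borderline cases. Its square is then $o(n)\le o(ng(n))$, since $g$ is bounded below. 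I expect this step, namely controlling the interaction terms uniformly across all regimes $d/\beta\ge 3/2$, including $d/\beta=2$ and the slowly varying corrections, to be the main technical obstacle. Potter bounds should settle the borderline comparisons.

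For the joint statement, the same reduction shows that the vector $(\R_n^k-\EE\R_n^k)_k/\sqrt{ng(n)}$ is $L^2$-close to $(|X^k(0,n)|-\EE|X^k(0,n)|)_k/\sqrt{ng(n)}$. The ranges of the independent walks $X^1,\dots,X^N$ are independent. Each converges to $\mathcal N(0,\sigma^2)$, so the vector converges to the product measure $\mathcal N(0,\sigma^2)^{\otimes N}$, and Slutsky's lemma in $\RR^N$ concludes. The constant $\sigma$ is explicitly the one from \cite{LeGallRosenRangeOfStableRandomWalks1991}.
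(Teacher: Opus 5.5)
Your proposal is correct and follows the paper's own argument: the decomposition \eqref{eq:decomposition formula for R_n^k}, $L^2$-domination of every $\R_n^{S\rightarrow k}$ by $I_n^{\{j,k\}}$, the Le Gall--Rosen CLT for $|X^k(0,n)|$ combined with Slutsky, and independence of the walks for the joint statement. Your separate treatment of $d/\beta\ge2$ via $\sum_x G_n(0,x)^2$ is warranted: the paper's bound $\EE[I_n^{\{k,k'\}}]\lesssim n^2b(n)^{-d}$ only holds for $d/\beta<2$ (for $d/\beta>2$ its right-hand side tends to $0$ while $I_n^{\{k,k'\}}\ge1$), whereas your bounded-or-slowly-varying estimate is correct and still gives $o(ng(n))$.
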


\begin{proof}
    To see that this is indeed the case, we use (\ref{eq:decomposition formula for R_n^k}) to write
    \begin{equation*}
        \R_n^k=\left|X^k(0,n)\right|+\widetilde\varepsilon_n^k,
    \end{equation*}
    where $\widetilde\varepsilon_n^k\eqdef\sum_{\substack{S\subseteq[\![N]\!]\backslash\{k\}\\ S\ne\emptyset}}(-1)^{|S|}\R_n^{S\rightarrow k}$. According to \cite{LeGallRosenRangeOfStableRandomWalks1991}, Theorem 4.6, there is some $\sigma>0$ such that 
    \begin{equation*}
        \Var\left(\left|X^k(0,n)\right|\right)\underset{n\rightarrow\infty}\sim \sigma^2ng(n).
    \end{equation*}
    Furthermore, for any non-empty $S\subseteq[\![N]\!]$ we let $a_S\in S$ and so
    \begin{align*}
        \Var\left(\widetilde\varepsilon_n^k\right)&\le\EE\left[(\widetilde\varepsilon_n^k)^2\right]\\
        &=\sum_{\substack{S_1,S_2\subseteq[\![N]\!]\backslash\{k\}\\ S_1,S_2\ne\emptyset}}(-1)^{|S_1|+|S_2|}\EE\left[\R_n^{S_1\rightarrow k}\R_n^{S_2\rightarrow k}\right]\\
        &\lesssim\EE\left[\left(I_n^{\{k,k'\}}\right)^2\right]\lesssim\EE\left[I_n^{\{k,k'\}}\right]^2,
    \end{align*}    
    where $k'\ne k$ and the constant depends only on $N$. According to \cite{LeGallRosenRangeOfStableRandomWalks1991}, Corollary 3.2, we have $\EE\left[I_n^{\{k,k'\}}\right]\lesssim n^2b(n)^{-d}$, and so
    \begin{equation*}
        \frac{\Var\left(\widetilde\varepsilon_n^k\right)}{ng(n)}\lesssim \frac{n^3}{b(n)^{2d}g(n)}=\frac{n^{2(3/2-d/\beta)}}{s(n)^{2d}g(n)}=o(1),
    \end{equation*}
    where the $o(1)$ is obvious in the case $d/\beta>3/2$ and is a consequence of properties of regularly varying functions in the case $d/\beta=3/2$. In particular, 
    \begin{equation*}
        \frac{\widetilde\varepsilon_n^k-\EE\left[\widetilde\varepsilon_n^k\right]}{\sqrt{ng(n)}}\underset{n\rightarrow\infty}{\overset{L^2}\longrightarrow}0,
    \end{equation*}
    and so we obtain the desired CLT by using \eqref{limit:fluctuations range d/beta>=3/2} and Slutsky's theorem. The fact that the joint convergence also holds follows from the same argument and the fact that the $X^k$ are independent.
\end{proof}

When $d/\beta<1$, the situation is different as the range no longer satisfies a strong LLN. Indeed, it was shown in \cite{LeGallRosenRangeOfStableRandomWalks1991} that
\begin{equation*}
    \frac{1}{b(n)}|X(0,n)|\distribution\lambda\left(U(0,1)\right),
\end{equation*}
where $\lambda$ denotes the Lebesgue measure on $\RR$. We believe that the competitive range behaves similarly in this regime.
\begin{conjecture}
    If $X^1,\dots,X^N$ are in the domain of attraction of a $\beta$-stable process $U$ with scale function $b(n)$ and that $d/\beta<1$, then 
    \begin{equation*}
        \frac{1}{b(n)}\R_n^k\distribution\lambda\left(\left\{x\in\RR, \tau_x^k\le 1\wedge\min_{j\ne k}\tau_x^j\right\}\right),
    \end{equation*}
    where $U^1,\dots,U^N$ are $i.i.d.$ copies of $U$ and where $\tau_x^\ell\eqdef\inf\{t>0, U_t^\ell=x\}$.

    Furthermore, joint convergence in $k\in[\![N]\!]$ also holds.
\end{conjecture}
We give a proof of this conjecture in the case where $X^1,\dots,X^N$ are symmetric simple random walks, however it is clear to see that the technique used can't be generalized to the heavy-tailed setting, since continuity of the scaling limit is crucial. We first prove the following lemma 

\begin{lemma}
\label{lemma:joint continuity of Brownian hitting times}
    Let $\mathbb W$ denote the Wiener measure on $\mathcal C\eqdef\mathcal C([0,1])$. We define a mapping $\tau$ on $\RR\times\mathcal C$ by 
    \begin{equation*}
        \tau(x,\omega)\eqdef\inf\{t\ge0, \omega_t=x\},
    \end{equation*}
    with the convention $\inf\emptyset=\infty$. Then for  $\lambda\otimes\mathbb W$-a.a. $(x,\omega)\in\RR\times\mathcal C$, $\tau$ is continuous at $(x,\omega)$ for the product uniform topology.
\end{lemma}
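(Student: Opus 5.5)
The plan is to identify a deterministic set of \emph{good} pairs $(x,\omega)$ at which $\tau$ is continuous, and then show that this set has full $\lambda\otimes\mathbb W$ measure by Fubini. Fix $(x,\omega)$ and set $t_0=\tau(x,\omega)$. There are two cases.

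\emph{Case $t_0=\infty$}, i.e.\ $x\notin\omega([0,1])$. Since $\omega([0,1])$ is compact, $\delta\eqdef\mathrm{dist}(x,\omega([0,1]))>0$. If $|y-x|+\lVert\omega'-\omega\rVert_\infty<\delta$, then $y\notin\omega'([0,1])$, so $\tau(y,\omega')=\infty$. Hence $\tau$ is continuous at every such point, with no extra condition needed.

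\emph{Case $t_0<\infty$.} Here I impose three conditions:
\begin{enumerate}[label=(\roman*)]
\item $t_0\in(0,1)$;
\item $x$ is not a local extremum value of $\omega$, and in particular $x\ne\min_{[0,1]}\omega$, $x\ne\max_{[0,1]}\omega$, and $x\neq \omega_0,\omega_1$;
\item for every $\eta>0$, $\omega$ takes values strictly above and strictly below $x$ on $(t_0,t_0+\eta)$.
\end{enumerate}

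\emph{Upper semicontinuity.} Given $\eta>0$, condition (iii) gives times $s_\pm\in(t_0,t_0+\eta)$ with $\omega_{s_+}>x>\omega_{s_-}$. For $(y,\omega')$ close enough to $(x,\omega)$ we still have $\omega'_{s_+}>y>\omega'_{s_-}$, so by the intermediate value theorem $\tau(y,\omega')<t_0+\eta$.

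\emph{Lower semicontinuity.} On $[0,t_0-\eta]$ the path $\omega$ does not hit $x$, so by compactness $\inf_{[0,t_0-\eta]}|\omega-x|>0$. This margin persists under small perturbations, so $\tau(y,\omega')>t_0-\eta$.

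Only condition (iii) and $t_0<\infty$ are really used in these two arguments; the other conditions just guarantee that we are in a well-defined interior situation.

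\emph{Full measure via Fubini.} It suffices to show that for each fixed $x\ne0$, $\mathbb W$-a.s.\ either $\tau_x>1$ or (iii) holds at $\tau_x$; the single point $x=0$ is $\lambda$-null.
\begin{itemize}
\item By the strong Markov property at the stopping time $\tau_x$, the process $B_{\tau_x+\cdot}-x$ is a Brownian motion on $\{\tau_x<\infty\}$.
\item By Blumenthal's $0$--$1$ law, a Brownian motion a.s.\ takes both signs on every interval $(0,\eta)$. This gives (iii).
\item $\mathbb W(\tau_x=1)=0$, since $\tau_x$ has a density.
\end{itemize}

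\emph{Measurability.} The set of good pairs must be jointly measurable for Fubini to apply. I handle this by expressing (iii) through rational times and rational $\eta$, using continuity of $\omega$.

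\emph{Expected main obstacle.} The only delicate points are the boundary case $\tau=1$ exactly, which is null as noted above, and the joint measurability needed for Fubini. Otherwise the argument is routine.
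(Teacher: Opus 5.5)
Your proposal is correct and follows essentially the same route as the paper. The case $\tau=\infty$ is an open condition (you use the positive distance from $x$ to the compact range), lower semicontinuity comes from the strict margin before $\tau-\varepsilon$ (the paper's $\max_{[0,\tau-\varepsilon]}\omega<x$, your $\inf|\omega-x|>0$), and upper semicontinuity comes from a crossing of level $x$ just after $\tau$ combined with the intermediate value theorem. The paper only asserts that a.e. $\omega$ has no local maximum at $\tau$ and uses a one-sided crossing together with $\omega(0)<x$. You instead justify the almost-everywhere crossing via the strong Markov property at $\tau_x$, Blumenthal's $0$--$1$ law and Fubini, and you explicitly dispose of $\tau=1$ and of measurability, which makes your version more rigorous.
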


\begin{proof}
    We start by noticing that $\tau$ is discontinuous on the set
    \begin{equation*}
        \mathcal A\eqdef\left\{(x,\omega)\in\RR\times\mathcal C, x\in\{\inf\omega,\sup\omega\}\right\}.
    \end{equation*}
    Indeed, for any $\omega\in\mathcal C$, $\tau(x,\omega)\le1$ for $x\in[\inf\omega,\sup\omega]$ but $\tau(x,\omega)=\infty$ for $x\notin[\inf\omega,\sup\omega]$. However, $\lambda\otimes\mathbb W(\mathcal A)=0$, and so we restrict our attention to $\mathcal A^c=\mathcal A_1\cup\mathcal A_2$, where 
    \begin{equation*}
        \mathcal A_1\eqdef\left\{(x,\omega)\in\mathcal A^c, x\in[\inf\omega,\sup\omega]\right\},\quad \mathcal A_2\eqdef\left\{(x,\omega)\in\mathcal A^c, x\notin[\inf\omega,\sup\omega]\right\}.
    \end{equation*}
    Since $\tau$ is identically equal to $\infty$ on $\mathcal A_2$, it is in particular continuous (by viewing $\infty$ as a cemetery point). Therefore, it remains to show that $\tau$ is continuous on $\mathcal A_1$.
    
    Let $(x,\omega)\in\mathcal A_1$ such that $\tau\eqdef\tau(x,\omega)<\infty$. By symmetry, we suppose that $\omega(0)<x$, and that for any $\varepsilon>0$, $\exists t_\varepsilon\in(\tau,\tau+\varepsilon)$ such that $\omega(t_\varepsilon)>x$. We can make this assumption without loss of generality, since for $\lambda\otimes\mathbb W$-a.a. $(x,\omega)\in\RR\times\mathcal C$, $\omega$ doesn't reach a local maxima at $\tau$. Let $\varepsilon\in(0,\tau)$. Since $\omega$ is continuous, we have 
    \begin{equation*}
        m_\varepsilon\eqdef\max_{t\le\tau-\varepsilon}\omega(t)<x,
    \end{equation*}
    and $M_\varepsilon\eqdef\omega(t_\varepsilon)>x$. Finally, let $\delta_\varepsilon\eqdef10^{-1}\left((x-m_\varepsilon)\wedge (M_\varepsilon-x)\right)$ and consider $(\widetilde x,\widetilde\omega)\in\RR\times\mathcal C$ such that $|x-\widetilde x|<\delta_\varepsilon$ and $\lVert\omega-\widetilde\omega\lVert_\infty<\delta_\varepsilon$. Then for any $t\le\tau-\varepsilon$,
    \begin{equation*}
        \widetilde\omega(t)\le\omega(t)+\delta_\varepsilon\le m_\varepsilon+\delta_\varepsilon<x-\delta_\varepsilon<\widetilde x.
    \end{equation*}
    In particular, $\tau(\widetilde x,\widetilde\omega)>\tau-\varepsilon$. On the other hand,
    \begin{equation*}
        \widetilde\omega(t_\varepsilon)\ge M_\varepsilon-\delta_\varepsilon>x+\delta_\varepsilon>\widetilde x,
    \end{equation*}
    which yields $\tau(\widetilde x,\widetilde\omega)\le t_\varepsilon<\tau+\varepsilon$. Combining these two inequalities yields $|\tau-\tau(\widetilde x,\widetilde\omega)|<\varepsilon$, and thus the continuity of $\tau$ at $(x,\omega)$.
\end{proof}

\begin{proposition}
    Let $X^1,\dots,X^N$ be independent symmetric simple random walks in $\ZZ$. Then 
    \begin{equation*}
        n^{-1/2}\R_n^k\distribution\lambda\left(\left\{x\in\RR, \tau_x^k\le 1\wedge\min_{j\ne k}\tau_x^j\right\}\right),
    \end{equation*}
    where $\tau_x^j\eqdef\inf\{t>0, W_t^j=x\}$ and $W^1,\dots,W^N$ are independent standard Brownian Motions in $\RR$. 

    Furthermore, joint convergence in $k\in[\![N]\!]$ also holds.
\end{proposition}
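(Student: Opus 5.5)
The plan is to write the competitive range as an integral of a functional of the rescaled walks, and then use Donsker's theorem together with Lemma \ref{lemma:joint continuity of Brownian hitting times}. Since the walks are simple, they are skip-free. The set of sites visited by $X^\ell$ before time $n$ is therefore the integer interval $[\min_{i\le n}X^\ell_i,\max_{i\le n}X^\ell_i]$. In particular $T_x^\ell$ is finite and at most $n$ exactly for $x$ in that interval.

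Writing $W^{\ell,n}_t\eqdef n^{-1/2}X^\ell_{\lfloor nt\rfloor}$ (or its linear interpolation, which lies in $\mathcal C$ and has the same hitting times of integer levels up to a shift of at most $1/n$ in time), we get
\begin{equation*}
n^{-1/2}\R_n^k=\int_{\RR}\d x\,\1\left(\tau(x,W^{k,n})\le 1,\ \tau(x,W^{k,n})<\min_{j\ne k}\tau(x,W^{j,n})\right)+\mathcal O(n^{-1/2}),
\end{equation*}
where the real level $x$ is replaced by the lattice site $\lfloor \sqrt n x\rfloor$. The error comes from this rounding of levels and from the $1/n$ time discretisation. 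Because the walk is nearest-neighbour, the hitting time of a real level $y\in(m,m+1)$ by the interpolated path differs from that of $m$ or $m+1$ only through the order in which adjacent levels are hit. I would control this by noting that ties or near-ties between $\tau^k_x$ and $\tau^j_x$ occur for a set of $x$ of vanishing Lebesgue measure in the limit. Alternatively, I would sandwich the integrand between the indicators at levels $m$ and $m+1$.

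Next, define on $\mathcal C^N$ the functional
\begin{equation*}
\Phi(\omega^1,\dots,\omega^N)\eqdef\int_{\RR}\d x\,\1\left(\tau(x,\omega^k)\le1,\ \tau(x,\omega^k)<\min_{j\ne k}\tau(x,\omega^j)\right).
\end{equation*}
By Donsker's theorem, $(W^{\ell,n})_\ell\distribution(W^\ell)_\ell$ in $\mathcal C^N$. By the continuous mapping theorem it then suffices to show that $\Phi$ is continuous at $\mathbb W^{\otimes N}$-a.e. point. Fix $\omega=(\omega^\ell)$ and a sequence $\omega_m\to\omega$ uniformly. By Lemma \ref{lemma:joint continuity of Brownian hitting times} and Fubini, for a.e. $\omega$ the set of $x$ at which some $\tau(\cdot,\omega^\ell)$ is discontinuous is Lebesgue-null. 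Moreover, $\{x: \tau(x,\omega^k)=1\}$ and $\{x:\tau(x,\omega^k)=\tau(x,\omega^j)<\infty\}$ are null for a.e. $\omega$. This follows from Fubini: for fixed $x$, $\tau_x^k$ has a continuous law and is independent of $\tau_x^j$. Off these null sets the integrand converges pointwise. The integrand is bounded by $\1(x\in[\inf\omega_m^k,\sup\omega_m^k])$, which is supported in a fixed compact set. Dominated convergence then gives $\Phi(\omega_m)\to\Phi(\omega)$. Joint convergence in $k$ follows identically, since the vector $(\Phi_k)_k$ is continuous at the same points.

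I expect the main obstacle to be the discretisation step: making the identity between $n^{-1/2}\R_n^k$ and $\Phi(W^{1,n},\dots,W^{N,n})$ precise up to an error tending to $0$ in probability. The difficulty is the strict inequality $T_x^k<\min_{j\ne k}T^j_x$ versus ties at lattice times. My first attempt would be to bound the contribution of sites where $|T^k_x-T^j_x|\le 1$. This is at most the number of $x$ with $X^k$ and $X^j$ at $x$ at almost the same time, which is bounded by the collision count of two walks, of expected order $\sqrt n\cdot n^{-1/2}\sqrt n=O(\sqrt n)$. That bound is too crude, so I would instead apply the continuous-mapping argument to a version of $\Phi$ with non-strict inequality. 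The two versions agree in the limit by the null-tie property, and the discrete quantity is sandwiched between them.
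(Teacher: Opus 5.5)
Your treatment of the limit functional is sound and matches the paper's argument. The Lemma plus Fubini gives, for a.e.\ $\omega$, convergence $\tau(x,\omega_m^\ell)\to\tau(x,\omega^\ell)$ for a.e.\ $x$. The tie sets and $\{\tau(x,\omega^k)=1\}$ are null, and domination by the range of $\omega^k_m$ replaces the paper's cutoff $K$.

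\textbf{The gap} is the discretisation step you flag yourself. The identity $n^{-1/2}\R_n^k=\Phi(\cdot)+\mathcal O(n^{-1/2})$ is asserted without proof, and none of the remedies is carried out:
\begin{enumerate}[label=(\roman*)]
\item ``ties of vanishing measure in the limit'' is a property of the Brownian limit and says nothing about the prelimit;
\item the sandwich between the indicators at levels $m$ and $m+1$ is not justified;
\item the final claim, that $n^{-1/2}\R_n^k$ lies between the strict and non-strict versions of $\Phi$, is stated but not proved.
\end{enumerate}
Moreover, the obstacle you identify is not the real one. With lattice levels, a discrete tie $T_x^k=T_x^j$ is exactly a tie of the rescaled hitting times, so the strict inequality excludes it on both sides. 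Ties only matter for the a.e.\ continuity of the limit functional, which you already handle.

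\textbf{The missing observation} is that, for a nearest-neighbour walk, the prelimit identity is exact.

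\emph{The paper's route.} It keeps the rounded level $x_n=n^{-1/2}\lfloor\sqrt n x\rfloor$. The linear interpolation $\widetilde X^{\ell,n}$ first reaches $x_n$ exactly at time $T^\ell_{\lfloor\sqrt n x\rfloor}/n$, not merely up to $1/n$. Hence $n^{-1/2}\R_n^k=\int\d x\,f_k(x_n,\widetilde X^{1,n},\dots,\widetilde X^{N,n})$ with no error term. Because the Lemma gives continuity of $\tau$ jointly in $(x,\omega)$, the a.s.\ convergence $(x_n,\widetilde X^{n})\to(x,W)$ after a Skorokhod coupling yields pointwise convergence of the integrand for a.e.\ $x$.

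\emph{Your real-level route.} The same structure closes it. Suppose $\sqrt n x\in(m,m+1)$ with $m\ge0$. Each interpolated path first reaches $x$ during the step at which it first hits $m+1$. This happens at time $(T^\ell_{m+1}-1+\sqrt n x-m)/n$ when $T^\ell_{m+1}\le n$, and never on $[0,1]$ otherwise. The shift $(\sqrt n x-m-1)/n$ is common to all $\ell$, so both the event $\{\tau\le1\}$ and the order between walks coincide exactly with those at the lattice site $m+1$. Negative levels are symmetric. The origin is counted on neither side, since $T_0^\ell=0$ for all $\ell$. Hence $\Phi(\widetilde X^{1,n},\dots,\widetilde X^{N,n})=n^{-1/2}\R_n^k$ identically, and your continuous-mapping argument finishes the proof with no discretisation error to control.
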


\begin{proof}
    For $K>0$, we introduce
    \begin{equation*}
        \R_n^{k,K}\eqdef\sum_{|x|\le K\sqrt n}\1\left(T_x^k\le n, T_x^k<\min_{\ell\ne k}T_x^\ell\right).
    \end{equation*}
    As we saw before, it is sufficient to show that 
    \begin{equation*}
        n^{-1/2}\R_n^{k,K}\distribution\lambda\left(\left\{|x|\le K, \tau_x^k\le 1\wedge\min_{j\ne k}\tau_x^j\right\}\right),
    \end{equation*}
    as the cutoff can be removed by letting $K\rightarrow\infty$. To do this, we recall 
    \begin{equation*}
        X_t^{j,n}\eqdef n^{-1/2}X_{\lfloor nt\rfloor}^j,\quad t\in[0,1],
    \end{equation*}
    and then by using the notation from Lemma \ref{lemma:joint continuity of Brownian hitting times} and introducing $x_n\eqdef n^{-1/2}\lfloor n^{1/2}x\rfloor$ for $x\in\RR$, we get
    \begin{align*}
        n^{-1/2}\R_n^{k,K}&=n^{-1/2}\sum_{|x|\le K\sqrt n}\1\left(\tau\left(n^{-1/2}x,X^{k,n}\right)\le 1\right)\\
        &\hspace{6em}\times\1\left(\tau\left(n^{-1/2}x,X^{k,n}\right)<\min_{\ell\ne k}\tau\left(n^{-1/2}x,X^{\ell,n}\right)\right)\\
        &=\int_{\mathcal B_K^{(n)}}\d x\ \1\left(\tau\left(x_n,X^{k,n}\right)\le 1,\tau\left(x_n,X^{k,n}\right)<\min_{\ell\ne k}\tau\left(x_n,X^{\ell,n}\right)\right)\\
        &=:\int_{\RR}\d x\ f_k\left(x_n,X^{1,n},\dots X^{N,n}\right)\1\left(|x_n|\le K\right).
    \end{align*}
    Let $\widetilde X^{j,n}$ be the linearly interpolated version of $X^{j,n}$. The fact that $X^j$ is a simple random walk implies that for any $y\in n^{-1/2}\ZZ$,
    \begin{equation*}
        \tau\left(y,X^{j,n}\right)=\tau\left(y,\widetilde X^{j,n}\right).
    \end{equation*}
    In particular, for any $x\in\RR$,
    \begin{equation*}
        f_k\left(x_n,X^{1,n},\dots X^{N,n}\right)=f_k\left(x_n,\widetilde X^{1,n},\dots,\widetilde X^{N,n}\right).
    \end{equation*}
    Donsker's Theorem yields
    \begin{equation*}
        \left(X^{1,n},\dots,X^{N,n}\right)\distribution\left(W^1,\dots,W^N\right)
    \end{equation*}
    in the product uniform topology on $\mathcal C^N$. By Skorokhod's representation Theorem, we may suppose without loss of generality that the previous convergence holds $a.s.$ instead of in distribution. By Lemma \ref{lemma:joint continuity of Brownian hitting times}, $f$ is continuous $\lambda\otimes\mathbb W^{\otimes N}$-a.e. on $\RR\times\mathcal C^N$, and so for $\lambda$-a.a. $x\in\RR$,
    \begin{equation*}
        f_k\left(x_n,\widetilde X^{1,n},\dots,\widetilde X^{N,n}\right)\underset{n\rightarrow\infty}{\overset{a.s.}\longrightarrow}f_k\left(x,W^1,\dots,W^N\right).
    \end{equation*}
    In particular, the dominated convergence theorem yields 
    \begin{align*}
        n^{-1/2}\R_n^{k,K}&\underset{n\rightarrow\infty}{\overset{a.s.}\longrightarrow}\int_{\RR}\d x\ f_k\left(x,W^1,\dots,W^N\right)\1\left(|x|\le K\right)\\
        &=\lambda\left(\left\{|x|\le K, \tau\left(x, W^j\right)\le 1, \tau\left(x,W^j\right)<\min_{\ell\ne k}\tau\left(x,W^\ell\right)\right\}\right)\\
        &\overset{a.s.}=\lambda\left(\left\{|x|\le K, \tau_x^j\le 1\wedge\min_{\ell\ne k}\tau_x^\ell\right\}\right),
    \end{align*}
    since for every $x\in\RR$, $\tau_x^j$ has a density. To see that the joint convergence in $k$ holds, one can simply use Skorokhod's representation and note that 
    \begin{equation*}
        (f_1,\dots,f_N)\left(x_n,\widetilde X^{1,n},\dots,\widetilde X^{N,n}\right)\underset{n\rightarrow\infty}{\overset{a.s.}\longrightarrow}(f_1,\dots,f_N)\left(x,W^1,\dots,W^N\right)
    \end{equation*}
    for $\lambda$-a.a. $x\in\RR$ using the same argument as before.
\end{proof}

In particular, we notice that the effect of the competition in the case of symmetric simple random walks in $\ZZ$ appears as early as the first order in the asymptotics of $\R_n^k$. We expect this to also be the case for any $\beta$-stable walk with $\beta>1$.

Finally, note that the previous results can all be extended to the case where the $X^k$ are still $\beta$-stable walks with the same scale function $b$ but potentially not identically distributed. In this case, we have $b(n)^{-1}X_n^k\distribution U_1^k$ for all $k\in[\![N]\!]$, where $U^1,\dots,U^N$ are independent $\beta$-stable processes in $\RR^d$. By letting $h_0(n)\eqdef\sum_{j=1}^n b(j)^{-d}$, then by (\ref{limit:trunctaed green's function asymptotics}) we have 
\begin{equation*}
    h_k(n)\underset{n\rightarrow\infty}\sim p_1^k(0)h_0(n),
\end{equation*}
where $h_k$ is the truncated Green's function associated to $X^k$ and where $p_t^k$ is the density of $U_t^k$. By writing $p_k\eqdef p_1^k(0)$, a direct adaptation of the results in Section \ref{section:Limit Theorems for R^(A->B) and I^(A->B)} shows that our LLN becomes 
\begin{equation*}
    \frac{h_0(n)}{n}\R_n^k\underset{n\rightarrow\infty}{\overset{L^2}\longrightarrow}(p_k)^{-1},
\end{equation*}
with $a.s.$ convergence if $s(n)\ge1$ for any $n\ge1$, and similarly our CLT becomes 
\begin{equation*}
    \label{convergence:CLT without iid}
    \frac{h_0(n)^2b(n)^d}{n^2}\left(\R_n^k-\EE\left[\R_n^k\right]\right)\distribution-\left((p_k)^{-2}\gamma^k+\sum_{j\ne k}(p_jp_k)^{-1}\overline\alpha^{j,k}\right)\left([0,1]_\le^2\right).
\end{equation*}
Note also that like before, the joint convergence in $k$ also holds.

\paragraph{Acknowledgement.} This work was funded by the CNRS project MITI. The author wishes to warmly thank his advisors Vincent Bansaye, Jean-René Chazottes and Sylvain Billiard for the time spent discussing and proofreading this work. This work  was partially funded by the Chair “Modélisation Mathématique et Biodiversit\'" of VEOLIA-Ecole Polytechnique-MNHN-F.X., by the European Union (ERC, SINGER, 101054787)

\printbibliography[title={Bibliography}]

\end{document}